\theoremstyle{definition}
\newtheorem{theorem}{Theorem}[section]
\newtheorem{prop}[theorem]{Proposition}
\newtheorem{lemma}[theorem]{Lemma}
\newtheorem{rem}{Remark}
\newtheorem{example}{Example}
\newtheorem{defn}{Definition}
\title{Hom-Poisson-Nijenhuis structures on Hom-Lie algebroids and Hom-Dirac structures on Hom-Courant algebroids}
\author{Tomoya Nakamura  \thanks{Department of Applied Mathematics, Waseda University, \textup{3-4-1}, Okubo, Shinjuku-ku, Tokyo, Japan}
 \\ email: \href{mailto:x-haze@ruri.waseda.jp}{x-haze@ruri.waseda.jp}}
\date{\today}
\begin{document}

  \maketitle
  
  \begin{abstract}
  In this paper, we develop the theory of Hom-Lie algebroids, Hom-Lie bialgebroids and Hom-Courant algebroids introduced by Cai, Liu and Sheng \cite{CLS}. Specifically, we introduce the notions of Hom-Poisson, Hom-Nijenhuis and Hom-Poisson-Nijenhuis structures on a Hom-Lie algebroid and the notion of Hom-Dirac structures on a Hom-Courant algebroid. We show that these structures satisfy similar properties to structures non ``Hom-''version. For example, there exists the hierarchy of a Hom-Poisson-Nijenhuis structure and we have a relation between Hom-Dirac structures and Maurer-Cartan type equation. Moreover we show that there exists a one-to-one correspondence between the pairs consisting of a Poisson structure on $M$ and a Poisson isomorphism for it, and Hom-Poisson structures on $M$ introduced in \cite{CLS}. 
  \end{abstract}

\section{Introduction}
Hom-Lie algebras are introduced in the study of $\sigma $-derivations of an associative algebra in \cite{HLS}. As a geometric generalization of Hom-Lie algebras, Laurent-Gengoux and Tales introduced the concept of Hom-Lie algebroids \cite{LT} and Cai, Liu and Sheng improved them \cite{CLS}. Hom-Lie algebroids are also a generalization of Lie algebroids. Lie algebroids themselves are a generalization of both Lie algebras and the tangent bundles (for more details, see \cite{M}). In \cite{CLS}, Cai, Liu and Sheng developed a Hom-Lie algebroid theory and defined several notions related to Hom-Lie algebroids, that is, Hom-Poisson structures (HPS) on a manifold $M$, Hom-Lie bialgebroids and Hom-Courant algebroids. These notions are generalizations of Poisson structures (PS) on $M$, Lie bialgebroids and Courant algebroids, respectively, and satisfy similar properties to each corresponding property in the Lie algebroid theory. 

First, one of the aims of this paper is to study HPSs. While the authors in \cite{CLS} introduced the notion of HPSs on $M$, we introduce the notion of HPSs on arbitrary Hom-Lie algebroid in this paper. Then the former is a HPS on $\varphi^!TM$. We show that several properties for HPSs on $M$ also hold for HPSs on any Hom-Lie algebroid. Specifically, we do that a HPS $\pi $ on a Hom-Lie algebroid $A$ induce the Hom-Lie algebroid structure on the dual bundle $A^*$ and that a pair of a Hom-Lie algebroid $A$ and the dual bundle $A^*$ has a Hom-Lie bialgebroid structure. Here $A^{*}$ equips with the Hom-Lie algebroid structure induced by a HPS $\pi $. In addition, we investigate HPSs on $M$. In Section \ref{Section:Hom-Poisson-Nijenhuis structures}, we show that there exists a one-to-one correspondence between the pairs consisting of a PS on $M$ and a Poisson isomorphism for it, and HPSs on $M$. This is one of the main theorems in this paper. By this theorem, applications of the Hom-Poisson theory to Poisson geometry can be expected and we see the necessity to study HPSs on arbitrary Hom-Lie algebroid. Then we introduce the notion of a Hom-Nijenhuis structure (HNS) on a Hom-Lie algebroid. This is a generalization of a Nijenhuis structure (NS) on a Lie algebroid. Unlike the ordinary NS on a Lie algebroid, a HNS on Hom-Lie algebroid $(A, \varphi, \phi_{A},[\cdot,\cdot]_{A}, a_{A})$ does not only satisfy the condition that the Nijenhuis torsion vanishes, but also ``the $\phi_{A}$-invariance''. This condition appears in several contexts in this paper and plays an important role for the development of Hom-Lie algebroid theory. Because of $\phi_{A}$-invariance, we can show that HNSs satisfy similar properties to NSs.
 Moreover, we introduce the notion of a Hom-Poisson-Nijenhuis structure (HPNS) on a Hom-Lie algebroid. This is a generalization of a Poisson-Nijenhuis structure (PNS) on a Lie algebroid \cite{MM2},\cite{CNN}. It follows in parallel to PNSs on a Lie algebroid that HPNSs on a Hom-Lie algebroid $A$ equips the hierarchy structure and that they correspond to Hom-Lie bialgebroid structures on $(A,A^{*})$, where $A^{*}$ is the dual bundle of $A$. Their results are also the main theorems in this paper. See \cite{V} and \cite{K3} for PNSs version of the above results respectively.
 
Second, another aim of this paper is to study Hom-Dirac structures (HDS) on a Hom-Courant algebroid. This structure is a generalization of a Dirac structure (DS) on a Courant algebroid \cite{LWX}. While a DS is a maximally isotropic and integrable subbundle on a Courant algebroid, a HDS is defined as a maximally isotropic, integrable and ``the $\phi_{E}$-invariant'' subbundle on a Hom-Courant algebroid $(E, \varphi, \phi_{E},\langle \! \langle \cdot ,\cdot \rangle \! \rangle , \odot_{E},\rho_{E})$. We show that general HDSs on a Hom-Courant algebroid are new Hom-Lie algebroids as in the case of DSs on a Courant algebroid \cite{LWX}. Moreover we show that the graph of a bundle map $H:A^{*}\longrightarrow A$ is a HDS on the Hom-Courant algebroid $A\oplus A^{*}$ constructed by a Hom-Lie bialgebroid $(A,A^{*})$ if and only if $H$ is skew-symmetric and $\phi_{A\oplus A^{*}}$-invariant and $H$ satisfies the Maurer-Cartan type equation:
 \begin{align}
d_{A^{*}}H +\frac{1}{2}[H,H]_{A}=0,
\end{align}
where $d_{A^{*}}$ is the differential of the Hom-Lie algebroid on $A^{*}$ and $[\cdot,\cdot]_{A}$ is the Hom-Schouten bracket on $A$. This result is a generalization of the result for DSs on a Courant algebroid \cite{LWX}. This is also one of the main theorem in this paper. In particular, the graph of any HPS $\pi $ on a Hom-Lie algebroid $A$ is a HDS on the standard Hom-Courant algebroid of $A$.

The paper is organized as follows. In Section \ref{section:Pleriminaries}, we recall several definitions, properties and examples of Hom-Lie algebroid, Hom-Lie bialgebroid and Hom-Courant algebroid. Moreover, we extend definitions of the induced map by $\phi_{A}$  and the Lie derivative on a Hom-Lie algebroid $(A, \varphi, \phi_{A},[\cdot,\cdot]_{A}, a_{A})$ to on $\Gamma(A^{\otimes k}\otimes (A^{*})^{\otimes l})$ and describe their properties. In Section \ref{Section:Hom-Poisson-Nijenhuis structures}, we describe HPSs, HNSs and HPNSs. In subsection \ref{subsection:Hom-Poisson structures}, we begin with the definition of HPSs on arbitrary Hom-Lie algebroid and show that they satisfy similar properties to HPSs on $M$ \cite{CLS}. Moreover, we show a relation between HPSs and PSs on $M$. In subsection \ref{subsection:Hom-Nijenhuis structures} and \ref{sub Hom-Poisson-Nijenhuis structures}, we give the definition and properties of HNSs and HPNSs. The existence of the hierarchy of a HPNS and the correspondence between HPNSs on $A$ and Lie bialgebroid structures on $(A,A^{*})$ are main results in the paper. In Section \ref{Hom-Dirac structures on Hom-Courant algebroids}, we introduce the notion of a HDS on a Hom-Courant algebroid. We show a relation between HDSs and Hom-Lie algebroids. Moreover, we show that  a necessary and sufficient condition for the graph of the bundle map $H:A^{*}\longrightarrow A$ to be a HDS is represented by the Maurer-Cartan type equation, which is one of the main results in the paper.

\section{Preliminaries}\label{section:Pleriminaries}

 \subsection{Hom-Lie algebroids}
 
 In this subsection, we recall the definitions and properties of Hom-Lie algebras, Hom-bundles and Hom-Lie algebroids.
% Hom-Lie代数、Hom-バンドル、Hom-Lie亜代数の定義、性質など(とりあえず書くべき定理や定義列挙)
 
 \begin{defn}[\cite{MS},\cite{CLS}]
 A {\it Hom-Lie algebra} is a non-associative algebra $(\mathfrak{g}, [\cdot,\cdot]_{\mathfrak{g}})$ together with an algebra homomorphism $\phi _{\mathfrak{g}}:\mathfrak{g}\longrightarrow \mathfrak{g}$ such that $[\cdot,\cdot]_{\mathfrak{g}}$ is skew-symmetric and the following {\it Hom-Jacobi identity} holds:
 \begin{eqnarray*}
 [\phi_{\mathfrak{g}}(X), [Y,Z]_{\mathfrak{g}}]_{\mathfrak{g}}+[\phi_{\mathfrak{g}}(Y), [Z,X]_{\mathfrak{g}}]_{\mathfrak{g}}+[\phi_{\mathfrak{g}}(Z), [X,Y]_{\mathfrak{g}}]_{\mathfrak{g}}=0
 \end{eqnarray*}
 for any $X,Y$ and $Z$ in $\mathfrak{g}$.
 \end{defn}
 
 Let $M$ be a smooth manifold and $\varphi:M\longrightarrow M$ a smooth map. Then the induced map $\varphi^{*}:C^{\infty}(M)\longrightarrow C^{\infty}(M)$ is given by $ \varphi^{*}f:=f\circ \varphi$
% \begin{align*}
% \varphi^{*}f:=f\circ \varphi
% \end{align*}
 and is a ring homomorphism, i.e., $\varphi^{*}(fg):=\varphi^{*}f\cdot\varphi^{*}g$
% \begin{align*}
% \varphi^{*}(fg):=\varphi^{*}f\cdot\varphi^{*}g
% \end{align*}
 for any $f$ and $g$ in $C^{\infty}(M)$.
 
 \begin{defn}[\cite{CLS}]
 A {\it Hom-bundle} is a triple $(A,\varphi,\phi_{A})$, where $A\longrightarrow M$ is a vector bundle over $M$, $\varphi :M\longrightarrow M$ is a smooth map and $\phi_{A}: \Gamma(A)\longrightarrow \Gamma (A)$ a linear map, such that $\phi_{A}(fX)=\varphi ^{*}f\cdot \phi_{A}(X)$ %the following condition holds:
% \begin{eqnarray*}
% \phi_{A}(fX)=\varphi ^{*}f\cdot \phi_{A}(X)
% \end{eqnarray*}
 for any $f$ in $C^{\infty }(M)$ and $X$ in $\Gamma (A)$. This condition is called {\it $\varphi^{*}$-fuction linear}. A Hom-bundle $(A,\varphi,\phi_{A})$ over $M$ is {\it invertible} if $\varphi$ is a diffeomorphism and $\phi_{A}$ is an invertible linear map.
 \end{defn}
 
  \begin{example}[\cite{CLS}]
Let $B\longrightarrow M$ be a vector bundle over $M$ and $\varphi:M\longrightarrow M$ a smooth map. We denote the pull-back bundle of $B$ along $\varphi$ by $\varphi^{!}B$. For any $X$ in $\Gamma (B)$, we define {\it the pull-back section} $X^{!}$ in $\Gamma (\varphi^{!}B)$ by for any $p$ in $M$, $X^{!}_{p}:=X_{\varphi(p)}$. For any bundle map $\Phi:\varphi^{!}B\longrightarrow B$, we define $\phi_{\varphi^{!}B}:\Gamma (\varphi^{!}B)\longrightarrow \Gamma (\varphi^{!}B)$ by
 \begin{align*}
\phi_{\varphi^{!}B}(X):=(\Phi(X))^{!}
 \end{align*}
 for any $X$ in $\Gamma(\varphi^{!}B)$. Then $(\varphi^{!}B,\varphi,\phi_{\varphi^{!}B})$ is a Hom-bundle.
 \end{example}
 
 \begin{example}[\cite{CLS}]
Let $(A,\varphi,\phi_{A})$ be an invertible Hom-bundle on $M$. We define $\phi_{A}^{\dagger}:\Gamma (A^{*})\longrightarrow \Gamma(A^{*})$ by
 \begin{align*}
 \langle \phi_{A}^{\dagger}(\xi),X\rangle:=\varphi^{*}\langle \xi,\phi_{A}^{-1}(X)\rangle
 \end{align*}
 for any $X$ in $\Gamma(A)$ and $\xi$ in $\Gamma (A^{*})$. Then $(A^{*},\varphi,\phi_{A}^{\dagger})$ is an invertible Hom-bundle. In particular, $(\phi_{A}^{\dagger })^{\dagger }=\phi_{A}$ holds.
 \end{example}
 
 \begin{defn}[\cite{CLS}]\label{Hom-Lie algebroid-def}
 A {\it Hom-Lie algebroid structure} on a vector bundle $A\longrightarrow M$ is a quadruple $(\varphi, \phi_{A},[\cdot,\cdot]_{A}, a_{A})$, where $(A,\varphi,\phi_{A})$ is a Hom-bunble, $(\Gamma (A),$ $\phi_{A},[\cdot,\cdot]_{A})$ is a Hom-Lie algebra and $a_{A}:A\longrightarrow \varphi^{!}TM$ is a bundle map, such that the following conditions are satisfied:
\begin{enumerate}
\item[(i)] For any $X,Y$ in $\Gamma (A)$ and $f$ in $C^{\infty}(M)$, 
$$[X,fY]_{A}=\varphi^{*}f[X,Y]_{A}+a_{A}(\phi _{A}(X))(f)\phi_{A}(Y);$$
\item[(ii)] For any $X$ and $Y$ in $\Gamma (A)$,
            \begin{enumerate}
            \item[(a)] $a_{A}(\phi_{A}(X))\circ \varphi^{*}=\varphi^{*}\circ a_{A}(X)$
            \item[(b)] $a_{A}([X,Y]_{A})\circ \varphi^{*}=a_{A}(\phi_{A}(X))\circ a_{A}(Y)-a_{A}(\phi_{A}(Y))\circ a_{A}(X)$
            \end{enumerate}
\end{enumerate}
 \end{defn}
 
 \begin{rem}
 A Hom-Lie algebroid is just a Lie algebroid if $\varphi=\mathrm{id}_{M}$ and $\phi_{A}=\mathrm{id}_{\Gamma (A)}$.
 \end{rem}
 
 \begin{example}[\cite{CLS}]
 Let $M$ be a smooth manifold and $\varphi :M\longrightarrow M$ a diffeomorphism. %We denote the pull-back bundle of $TM$ along $\varphi$ by $\varphi^{!}TM$. 
 Then $\Gamma (\varphi^{!}TM)$ can be identified with the $(\varphi^{*},\varphi^{*})$-derivations on $C^{\infty}(M)$, i.e., for any $f,g$ in $C^{\infty}(M)$ and $X$ in $\Gamma (\varphi^{!}TM)$,
\begin{align}
X(fg)=X(f)\varphi^{*}g+\varphi^{*}f\cdot X(g).
\end{align}
We define $\mathrm{Ad}_{\varphi^{*}}\!:\Gamma (\varphi^{!}TM)\longrightarrow \Gamma (\varphi^{!}TM)$ and $[\cdot,\cdot]_{\varphi^{*}}\!:\Lambda ^{2}\Gamma (\varphi^{!}TM)\longrightarrow \Gamma (\varphi^{!}TM)$ by
\begin{align}
\mathrm{Ad}_{\varphi^{*}}X&:=\varphi^{*}\circ X\circ (\varphi^{*})^{-1}\ \mbox{and}\\
[X,Y]_{\varphi^{*}}&:=\varphi^{*}\circ X\circ (\varphi^{*})^{-1}\circ Y\circ (\varphi^{*})^{-1}-\varphi^{*}\circ Y\circ (\varphi^{*})^{-1}\circ X\circ (\varphi^{*})^{-1}
\end{align}
for any $X$ and $Y$ in $\Gamma (\varphi^{!}TM)$, respectively. Then $(\varphi^{!}TM,\varphi,\mathrm{Ad}_{\varphi^{*}}, [\cdot,\cdot]_{\varphi^{*}},\mathrm{id})$ is a Hom-Lie algebroid on $M$.
 \end{example}

 \begin{rem}\label{def iikae}
 Let $(A,\varphi,\phi_{A})$ be an invertible Hom-bundle. Then the condition (ii) of the definition of a Hom-Lie algebroid can be rewritten as
\begin{enumerate}
\item[(ii)']% For any $X$ and $Y$ in $\Gamma (A)$,
            \begin{enumerate}
            \item[(a)] $a_{A}\circ \phi_{A}=\mathrm{Ad}_{\varphi^{*}}\circ a_{A}$
            \item[(b)] $a_{A}([X,Y]_{A})=[a_{A}(X),a_{A}(Y)]_{\varphi^{*}}\ (X,Y\in \Gamma (A)).$
            \end{enumerate}
\end{enumerate}
 \end{rem}
 
 \begin{example}
 For any Hom-bundle $(A,\varphi,\phi_{A})$, we set $[\cdot,\cdot]_{A}:=0$ and $a_{A}:=0$. Then $(A,\varphi,\phi_{A},[\cdot,\cdot]_{A},a_{A})$ is a Hom-Lie algebroid.
 \end{example}
 
 \begin{example}\label{TM oplus R}
The vector bundle $TM\oplus \mathbb{R}:=TM\oplus (M\times \mathbb{R})$ over a manifold $M$ has a natural Lie algebroid structure \cite{MMP}, \cite{S}. As a generalization of this, we introduce a natural Hom-Lie algebroid structure on the vector bundle $\varphi^{!}(TM\oplus \mathbb{R})$ over $M$, where $\varphi:M\longrightarrow M$ is a diffeomorphism. Considering the identification $\Gamma (\varphi^{!}(TM\oplus \mathbb{R}))\cong \Gamma (\varphi^{!}TM)\oplus \varphi^{*}C^{\infty}(M)$, we see that the $C^{\infty }(M)$-module structure on $\Gamma (\varphi^{!}(TM\oplus \mathbb{R}))$ is given by
\begin{align}
f(X,\varphi^{*}g)=(fX,f\varphi^{*}g)
\end{align}
for any $f,g$ in $C^{\infty}(M)$ and $X$ in $\Gamma (\varphi^{!}TM)$. Then we can define the Hom-Lie algebroid structure $(\varphi,\phi,[\cdot,\cdot],a)$ on $\varphi^{!}(TM\oplus \mathbb{R})$ as follows:
\begin{align}
\phi((X,\varphi^{*}f)):&=(\mbox{Ad}_{\varphi^{*}}X,(\varphi^{*})^{2}f);\\
[(X,\varphi^{*}f),(Y,\varphi^{*}g)]:&=([X,Y]_{\varphi^{*}},X(\varphi^{*}g)-Y(\varphi^{*}f))\\
                                    &=([X,Y]_{\varphi^{*}},\varphi^{*}(\mbox{Ad}_{\varphi^{*}}^{-1}(X)(g)-\mbox{Ad}_{\varphi^{*}}^{-1}(Y)(f))); \nonumber\\
a((X,\varphi^{*}f))&=X.
\end{align}
This is a new example of a Hom-Lie algebroid.
 \end{example}

% Unless otherwise stated,
 In the paper, we assume that all underlying Hom-bundles of Hom-Lie algebroids are invertible.
% 特に断らない限りこの論文では以下に現れる全てのHom-Lie亜代数の下にあるHom-bundleは可逆であると仮定する。
For any Hom-Lie algebroid $(A,\varphi,\phi_A,[\cdot ,\cdot ]_A,a_A)$ on $M$, the map $\phi_{A}$ induces a linear map $\Gamma(A^{\otimes k}\otimes (A^{*})^{\otimes l})\longrightarrow \Gamma(A^{\otimes k}\otimes (A^{*})^{\otimes l})$, which we use the same notaion, by
\begin{align}
\phi_{A}(X_{1}\otimes \dots \otimes &X_{k}\otimes \xi_{1}\otimes \dots \otimes \xi_{l})\nonumber \\
                                    &:=\phi_{A}(X_{1})\otimes \dots \otimes \phi_{A}(X_{k})\otimes \phi_{A}^{\dagger}(\xi_{1})\otimes \dots \otimes \phi_{A}^{\dagger}(\xi_{l})
\end{align}
for any $X_{1},\dots,X_{k}$ in $\Gamma (A)$, $\xi_{1},\dots,\xi_{l}$ in $\Gamma (A^{*})$. In particular, the induced map on $\Gamma ((A^{*})^{\otimes l})$ is denoted by $\phi_{A}^{\dagger}$.

 \subsection{Differential calculus on Hom-Lie algebroids}
 
  In this subsection, we recall the definitions and properties of the differential, the interior multiplication and the Lie derivative on Hom-Lie algebroids.
% $d,\iota, \mathcal{L}, [\cdot,\cdot]_{A}$など
 
 \begin{defn}[\cite{CLS}]\label{the differential of the Hom-Lie algebroid}
Let $M$ be a manifold, $(A,\varphi,\phi_A,[\cdot ,\cdot ]_A,a_A)$ a Hom-Lie algebroid over $M$. Then an operator $d_A:\Gamma (\Lambda ^k A^*)\longrightarrow \Gamma (\Lambda ^{k+1} A^*)$ is the {\it differential of the Hom-Lie algebroid} $A$ if for any $\omega $ in $\Gamma (\Lambda^k A^*)$ and $X_0,\dots ,X_k$ in $\Gamma(A)$,
\begin{align}\label{A-gaibibun}
(d_A &\omega)(X_0, \dots ,X_k)\nonumber \\
&=\sum_{i=0}^k(-1)^ia_A (X_i)(\omega(\phi_{A}^{-1}(X_0),\dots ,\widehat{\phi_{A}^{-1}(X_i)},\dots ,\phi_{A}^{-1}(X_k)))\nonumber \\
&\quad+\sum _{i<j}^{}(-1)^{i+j}\phi_{A}^{\dagger}(\omega )([\phi_{A}^{-1}(X_i),\phi_{A}^{-1}(X_j)]_A , X_0,\dots ,%\nonumber \\
%&\qquad \qquad \qquad \qquad \qquad \qquad \qquad \qquad 
\hat{X_i},\dots ,\hat{X_j},\dots ,X_k).
\end{align}
\end{defn}
 
\begin{prop}[\cite{CLS}]
The differential $d_{A}$ of the Hom-Lie algebroid $(A,\varphi,$ $\phi_A,[\cdot ,\cdot ]_A,a_A)$ satisfies the following properties:
\begin{align}
&d_{A}^{2}=0;\\
&d_{A}\circ \phi_{A}^{\dagger}=\phi_{A}^{\dagger}\circ d_{A};\\
&d_{A}(\omega \wedge \eta)=d_{A}\omega \wedge \phi_{A}^{\dagger }(\eta)+(-1)^{k}\phi_{A}^{\dagger }(\omega )\wedge D_{A}\eta 
\end{align}
for any $\omega $ in $\Gamma (\Lambda ^{k}A^{*})$ and $\eta $ in $\Gamma (\Lambda ^{*}A^{*})$.
\end{prop}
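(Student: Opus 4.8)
The plan is to verify the three identities in order, each by reducing to the defining formula~\eqref{A-gaibibun} and, where possible, leaning on the corresponding classical statement for Lie algebroids together with the commutation relation $d_A\circ\phi_A^\dagger=\phi_A^\dagger\circ d_A$ once that relation is in hand. First I would establish $d_A\circ\phi_A^\dagger=\phi_A^\dagger\circ d_A$, since the other two identities are cleanest to prove once we may freely move $\phi_A^\dagger$ past $d_A$. To do this I would take $\omega\in\Gamma(\Lambda^k A^*)$ and $X_0,\dots,X_k\in\Gamma(A)$, expand $(d_A(\phi_A^\dagger\omega))(X_0,\dots,X_k)$ using~\eqref{A-gaibibun}, and in each term rewrite $\phi_A^\dagger\omega$ evaluated on the $\phi_A^{-1}(X_i)$'s via the defining relation $\langle\phi_A^\dagger(\xi),X\rangle=\varphi^*\langle\xi,\phi_A^{-1}(X)\rangle$ (extended multilinearly), pulling out one factor of $\varphi^*$. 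Then I would use the anchor compatibility $a_A(\phi_A(X))\circ\varphi^*=\varphi^*\circ a_A(X)$ — equivalently $a_A\circ\phi_A=\mathrm{Ad}_{\varphi^*}\circ a_A$ from Remark~\ref{def iikae} — to slide the $\varphi^*$ through the anchor terms, and the bracket compatibility $a_A([X,Y]_A)\circ\varphi^*=\cdots$ is not needed here but the relation $\phi_A([X,Y]_A)=[\phi_A(X),\phi_A(Y)]_A$ (that $\phi_A$ is a Hom-Lie algebra homomorphism) will be, to recognize the resulting expression as $\varphi^*$ applied to $(d_A\omega)(X_0,\dots,X_k)$, i.e.\ as $(\phi_A^\dagger(d_A\omega))(X_0,\dots,X_k)$.

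Next, for the Leibniz rule $d_A(\omega\wedge\eta)=d_A\omega\wedge\phi_A^\dagger(\eta)+(-1)^k\phi_A^\dagger(\omega)\wedge d_A\eta$ (I read the $D_A$ in the statement as a typo for $d_A$), I would proceed by the standard induction on the degree $k$ of $\omega$. The base case $k=0$, where $\omega=f\in C^\infty(M)$, amounts to checking $d_A(f\eta)=d_Af\wedge\phi_A^\dagger(\eta)+\varphi^*f\cdot d_A\eta$ directly from~\eqref{A-gaibibun}, using $\phi_A^\dagger$'s $\varphi^*$-function-linearity and the derivation identity $[X,f\eta\text{-type}]$ encoded in axiom (i) of Definition~\ref{Hom-Lie algebroid-def}. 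For the inductive step, every $\omega\in\Gamma(\Lambda^k A^*)$ is locally a sum of terms $\alpha\wedge\beta$ with $\deg\alpha=1$; granting the rule for lower degrees and using associativity of $\wedge$ together with the already-proved relation $d_A\circ\phi_A^\dagger=\phi_A^\dagger\circ d_A$ (to collapse the double-$\phi_A^\dagger$ factors that appear, e.g.\ $\phi_A^\dagger(\phi_A^\dagger\eta)$ vs.\ $\phi_A^\dagger(\eta\wedge\cdots)$), the sign bookkeeping closes exactly as in the Lie algebroid case. One must be slightly careful that $\phi_A^\dagger$ is an algebra homomorphism for $\wedge$, i.e.\ $\phi_A^\dagger(\omega\wedge\eta)=\phi_A^\dagger(\omega)\wedge\phi_A^\dagger(\eta)$, which is immediate from the definition of the induced map on $\Gamma((A^*)^{\otimes l})$ restricted to $\Lambda$.

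Finally, $d_A^2=0$ is the most computational piece. I would verify it on functions and on $1$-forms by hand from~\eqref{A-gaibibun}: for $f\in C^\infty(M)$ one gets $(d_A^2 f)(X_0,X_1)$ equal to an expression in $a_A(\phi_A(X_i))\big(a_A(X_j)(f)\big)$ terms and an $a_A([\phi_A^{-1}(X_0),\phi_A^{-1}(X_1)]_A)$ term, and this vanishes precisely by the anchor-bracket compatibility (ii)(b) of Definition~\ref{Hom-Lie algebroid-def} together with (ii)(a) to match the $\varphi^*$'s; the computation on $1$-forms additionally consumes the Hom-Jacobi identity for $[\cdot,\cdot]_A$. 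Then $d_A^2=0$ in general follows from the degree-$0$ and degree-$1$ cases by the Leibniz rule proved in the previous step, since locally every form is a wedge of functions and differentials of functions, and $d_A(d_Af)=d_A^2f=0$. The main obstacle I anticipate is purely organizational rather than conceptual: keeping track of where $\phi_A$, $\phi_A^{-1}$, $\phi_A^\dagger$ and $\varphi^*$ must be inserted so that the Hom-twisted versions of the Lie-algebroid identities apply verbatim — in particular, checking that the $\phi_A^{-1}$'s inside~\eqref{A-gaibibun} interact correctly with the $\phi_A^\dagger$ in front of $\omega$ when one iterates $d_A$. Once the degree-$\le 1$ cases and the Leibniz rule are secured, no further genuinely new input is required, so I would present the degree-$\le 1$ verifications in full and then invoke induction for the general statements.
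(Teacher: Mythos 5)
The paper does not actually prove this proposition: it is quoted from \cite{CLS} without argument, so there is no in-paper proof to compare against. Judged on its own, your plan is the standard and correct route (prove $d_A\circ\phi_A^\dagger=\phi_A^\dagger\circ d_A$ first from axiom (ii)(a) and the fact that $\phi_A$ is a bracket homomorphism, then the graded Leibniz rule by induction, then $d_A^2=0$ in low degree from (ii)(a), (ii)(b) and the Hom-Jacobi identity, propagated by the product formula), and the individual reductions you describe do go through; I checked in particular that $(d_A^2f)(X_0,X_1)$ cancels exactly as you say via (ii)(b) after using (ii)(a) to move the $\varphi^*$.

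One step as written is wrong, though it is repairable from material you already commit to proving. You justify the passage from degrees $\le 1$ to all degrees by saying that ``locally every form is a wedge of functions and differentials of functions.'' For a general Hom-Lie algebroid (as for a general Lie algebroid, e.g.\ a Lie algebra over a point) the set $\{d_Af \mid f\in C^{\infty}(M)\}$ need not generate $\Gamma(A^*)$; indeed, the paper elsewhere imposes exactly this generation property as an \emph{additional hypothesis} in its Hom-Lie bialgebroid theorems, which confirms it is not automatic. The correct reduction is that every element of $\Gamma(\Lambda^kA^*)$ is locally a sum of terms $f\,\xi_{1}\wedge\cdots\wedge\xi_{k}$ for an arbitrary local frame $\{\xi_i\}$ of $A^*$; since you do verify $d_A^2=0$ on arbitrary $1$-forms (not only exact ones), the identity
\begin{align*}
d_A^2(\omega\wedge\eta)=d_A^2\omega\wedge(\phi_A^{\dagger})^2\eta+(\phi_A^{\dagger})^2\omega\wedge d_A^2\eta
\end{align*}
(which uses $d_A\circ\phi_A^\dagger=\phi_A^\dagger\circ d_A$ to cancel the cross terms) still closes the induction. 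Relatedly, your Leibniz induction needs the degree-one case of the product rule verified by hand as a second base case before factoring $\omega=\alpha\wedge\beta$ with $\deg\alpha=1$; that is routine but should be stated.
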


 \begin{rem}
{\it A Jacobi algebroid} is a Lie algebroid $A$ together with a $d_{A}$-closed cosection $\rho$ in $\Gamma (A^{*})$ \cite{IW}, \cite{Sh}. Then we can define {\it a Hom-Jacobi algebroid} as a pair of a Hom-Lie algebroid $A$ and a $d_{A}$-closed cosection $\rho$ in $\Gamma (A^{*})$. However a pair $(\varphi^{!}(TM\oplus \mathbb{R}), \phi_{A}((0,1)))$ is not an example of a Hom-Jacobi algebroid while a pair $(TM\oplus \mathbb{R}, (0,1))$ is an example of Jacobi algebroid, where $(0,1)$ is an element in $\Gamma (TM\oplus \mathbb{R})\cong \mathfrak{X}(M)\oplus C^{\infty}(M)$. It is interesting to investigate if there exists a Hom-Jacobi structure on $\varphi^{!}(TM\oplus \mathbb{R})$.
%There is a possibility that A can be defined
 \end{rem}
 
 Given a Hom-Lie algebroid $(A,\varphi,\phi_{A},[\cdot ,\cdot ]_{A},a_{A})$, the {\it Hom-Schouten bracket} $[\cdot ,\cdot ]_A:\Gamma (\Lambda ^kA)\times \Gamma (\Lambda ^lA)\longrightarrow \Gamma (\Lambda ^{k+l-1}A)$ is defined as the unique extension of the Lie bracket $[\cdot ,\cdot ]_A$ on $\Gamma (A)$ such that
\begin{align*}
&[f,g]_A=0;\\
&[X,f]_A=a_A(\phi_{A}(X))(f);\\
&[X,Y]_A\ \mbox{is the Hom-Lie bracket on}\ \Gamma (A);\\
&[D_1,D_2\wedge D_3]_{A}=[D_1,D_2]_{A}\wedge \phi_{A}(D_3)+(-1)^{\left(k+1\right)l}\phi_{A}(D_2)\wedge [D_1, D_3]_{A};\\
&[D_1,D_2]_A=-(-1)^{(k-1)(l-1)}[D_2,D_1]_A
\end{align*}
for any $f,g$ in $C^\infty(M)$, $X,Y$ in $\Gamma (A)$, $D_1$ in $\Gamma (\Lambda ^kA)$, $D_2$ in $\Gamma (\Lambda ^lA)$ and $D_3$ in $\Gamma (\Lambda ^*A)$.

For any $D$ in $\Gamma (\Lambda ^{k}A)$, we define the {\it interior multiplication} $\iota_{D}^{A}:\Gamma (\Lambda ^{m}A^{*})\longrightarrow \Gamma (\Lambda ^{m-k}A^{*})$by
 \begin{eqnarray*}
(\iota _{D}^{A}\omega )(X_{1},\dots ,X_{m}):=(\phi_{A}^{\dagger}(\omega))(\phi_{A}(D),X_{1},\dots ,X_{m})
 \end{eqnarray*}
 for any $\omega $ in $\Gamma (\Lambda ^{m}A^{*})$ and $X_{i}$ in $\Gamma (A)$. 
 
 For any $X$ in $\Gamma (A)$, the {\it Lie derivative} $\mathcal{L}_X^A$ is defined on $\Gamma (\Lambda ^k A^*)$ by the {\it Hom-Cartan formula}
\begin{eqnarray}\label{A-Lie derivative}
\mathcal{L}_X^A\circ \phi_{A}^{\dagger}=\iota _X^{A}\circ d_A+d_A \iota_{\phi_{A}^{-1}(X)} ^{A} 
\end{eqnarray}
and are extended on $\Gamma (\Lambda ^*A)$ by
\begin{align}\label{A-Lie derivative extension}
\mathcal{L}_X^AD=[X,D]_A 
\end{align}
for any $D$ in $\Gamma (\Lambda ^{*}A)$. Then we have
\begin{align}
\langle \mathcal{L}_X^A\alpha ,Y\rangle =a_{A}(\phi_{A}(X))\langle \alpha ,\phi_{A}^{-1}(Y)\rangle -\langle \phi_{A}^{\dagger }(\alpha ), \mathcal{L}_{X}^{A}(\phi_{A}^{-1}(Y))\rangle 
\end{align}
for any $\alpha $ in $\Gamma (A^{*})$ and $Y$ in $\Gamma (A)$. Moreover, we define the Lie derivative on $\Gamma(A^{\otimes k}\otimes (A^{*})^{\otimes l})$ by
\begin{align}
\mathcal{L}_X^A&(X_{1}\otimes \dots \otimes X_{k}\otimes \xi_{1}\otimes \dots \otimes \xi_{l})\nonumber \\
&:=\sum_{i=1}^{k}\phi_{A}(X_{1})\otimes \dots \otimes \mathcal{L}_X^AX_{i}\otimes \dots \otimes \phi_{A}(X_{k})\otimes \phi_{A}^{\dagger}(\xi_{1})\otimes \dots \otimes \phi_{A}^{\dagger}(\xi_{l})\nonumber \\
&+\sum_{j=1}^{l}\phi_{A}(X_{1})\otimes \dots \otimes \phi_{A}(X_{k})\otimes \phi_{A}^{\dagger}(\xi_{1})\otimes \dots \otimes \mathcal{L}_X^A(\xi_{j})\otimes \dots \otimes \phi_{A}^{\dagger}(\xi_{l})
\end{align}
for any $X_{i}$ in $\Gamma (A)$ and $\xi_{j}$ in $\Gamma (A^{*})$, which is well-defined.

 \subsection{Hom-Lie bialgebroids and Hom-Courant algebroids}
 
  In this subsection, we recall the definitions and properties of Hom-Lie bialgebroids and Hom-Courant algebroids.
% 上記構造の定義、性質
 
 \begin{defn}[\cite{CLS}]
 Let $(A, \varphi,\phi_{A})$ be an invertible Hom-bundle, $(A, \varphi,\phi_{A},$ $[\cdot,\cdot]_{A},a_{A})$ and $(A^{*}, \varphi,\phi_{A}^{\dagger},[\cdot,\cdot]_{A^{*}},a_{A^{*}})$ two Hom-Lie algebroids in duality. Then a pair $(A,A^{*})$ is a {\it Hom-Lie bialgebroid} if for any $X$ and $Y$ in $\Gamma (A)$,
 \begin{eqnarray*}
 d_{A^{*}}[X,Y]_{A}=[d_{A^{*}}X,\phi_{A}(Y)]_{A}+[\phi_{A}(X),d_{A^{*}}Y]_{A},
 \end{eqnarray*}
 where $d_{A^{*}}$ is the differential of the Hom-Lie algebroid $A^{*}$.
 \end{defn}
 
 \begin{prop}[\cite{CLS}]\label{Hom-Lie bilagebroid gyaku ok}
 If $(A, A^{*})$ is a Hom-Lie bialgebroid, then so is $(A^{*},A)$.
 \end{prop}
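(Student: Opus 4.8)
The plan is to exploit the symmetry of the Hom-Lie bialgebroid condition in a way that parallels the classical Lie bialgebroid case (Mackenzie--Xu, Kosmann-Schwarzbach). The key observation is that the defining identity $d_{A^{*}}[X,Y]_{A}=[d_{A^{*}}X,\phi_{A}(Y)]_{A}+[\phi_{A}(X),d_{A^{*}}Y]_{A}$ is, after dualization, an identity symmetric in the roles of $A$ and $A^{*}$. So the first step is to reformulate the compatibility condition: I would show that $(A,A^{*})$ is a Hom-Lie bialgebroid if and only if a certain bracket-type expression built from $d_A$, $d_{A^*}$, the Hom-Schouten brackets $[\cdot,\cdot]_A$, $[\cdot,\cdot]_{A^*}$, and the twisting maps $\phi_A$, $\phi_A^{\dagger}$ vanishes, and that this expression is manifestly invariant (up to sign) under swapping $A\leftrightarrow A^{*}$ and correspondingly $\phi_A\leftrightarrow\phi_A^{\dagger}$, $d_A\leftrightarrow d_{A^*}$.

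Concretely, I would proceed as follows. First, recall that $d_{A^{*}}$ acts on $\Gamma(\Lambda^\bullet A)$ as a degree-$1$ derivation (with the appropriate $\phi$-twists, as in the Proposition on $d_A$ in the excerpt), and that $d_{A^{*}}X$ for $X\in\Gamma(A)$ is an element of $\Gamma(\Lambda^2 A)$ dual to the bracket $[\cdot,\cdot]_{A^{*}}$ via the formula defining the differential; symmetrically, $d_A$ encodes $[\cdot,\cdot]_A$. The second step is to pair the bialgebroid identity against arbitrary sections: evaluate $d_{A^{*}}[X,Y]_{A}-[d_{A^{*}}X,\phi_{A}(Y)]_{A}-[\phi_{A}(X),d_{A^{*}}Y]_{A}$, which lives in $\Gamma(\Lambda^2 A)$, on a pair $\xi,\eta\in\Gamma(A^{*})$, using Definition \ref{the differential of the Hom-Lie algebroid} applied to $A^{*}$ (so $a_{A^{*}}$, $[\cdot,\cdot]_{A^{*}}$, $\phi_A^{\dagger}$ appear) together with the Leibniz rule for the Hom-Schouten bracket of $A$ and the formula relating $\mathcal{L}^A$, $a_A$, and the pairing. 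After expanding, the resulting scalar identity should be expressible entirely in terms of data appearing symmetrically — brackets of $A$ and $A^{*}$, anchors $a_A$ and $a_{A^{*}}$, and the twists $\phi_A,\phi_A^{\dagger}$ — with the $\phi$-compatibility relations (Definition \ref{Hom-Lie algebroid-def}(ii) for both $A$ and $A^{*}$, and $d_A\circ\phi_A^{\dagger}=\phi_A^{\dagger}\circ d_A$ and its $A^{*}$-analogue) used to move twists past anchors and brackets. The third step is then to observe that this symmetrized scalar expression is antisymmetric (or symmetric, up to an overall sign that does not affect vanishing) under the simultaneous interchange $X\leftrightarrow\xi$, $Y\leftrightarrow\eta$, $A\leftrightarrow A^{*}$; hence its vanishing for all $X,Y,\xi,\eta$ is equivalent to the vanishing of the same expression with $A$ and $A^{*}$ interchanged, which is precisely the statement that $(A^{*},A)$ is a Hom-Lie bialgebroid.

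The main obstacle I anticipate is bookkeeping of the $\phi_A$ and $\phi_A^{\dagger}$ twists: in the non-Hom setting the symmetrized ``big bracket'' identity is a clean classical fact, but here every term carries twisting maps, and one must verify that after using the Hom-Jacobi identities and the compatibility relations (ii)(a),(b) for both algebroids, the twists distribute in a way that genuinely respects the $A\leftrightarrow A^{*}$ symmetry rather than obstructing it — in particular that $(\phi_A^{\dagger})^{\dagger}=\phi_A$ (recorded in the excerpt) is exactly what is needed to close the loop. A cleaner alternative, which I would pursue in parallel, is to avoid coordinates altogether: package the structures into the Hom-analogue of the differential Gerstenhaber / big-bracket formalism on $\Gamma(\Lambda^{\bullet}(A\oplus A^{*}))$, show that the Hom-Lie bialgebroid condition is equivalent to a single equation $[\![\mu_A+\mu_{A^{*}},\mu_A+\mu_{A^{*}}]\!]=0$ (or the vanishing of a square of an odd derivation) that is symmetric by construction, and read off Proposition \ref{Hom-Lie bilagebroid gyaku ok} immediately; the work then shifts to establishing that this packaged reformulation is correct, for which the explicit expansion above serves as the verification.
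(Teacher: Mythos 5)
The paper does not actually prove this proposition: it is quoted verbatim from \cite{CLS}, so there is no in-paper argument to compare yours against. Judged on its own terms, your proposal identifies the strategy that is standard in the classical case (Mackenzie--Xu, Kosmann-Schwarzbach) and that \cite{CLS} adapts: pair the defect $d_{A^{*}}[X,Y]_{A}-[d_{A^{*}}X,\phi_{A}(Y)]_{A}-[\phi_{A}(X),d_{A^{*}}Y]_{A}$ against $\xi,\eta\in\Gamma(A^{*})$, expand, and exhibit the result as (up to sign) the mirror-image defect for $(A^{*},A)$ paired against $X,Y$. That is the right skeleton.

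The gap is that the one step carrying the entire content of the theorem --- the claim that ``after expanding, the resulting scalar identity should be expressible entirely in terms of data appearing symmetrically'' --- is asserted rather than carried out, and in the Hom-setting it is precisely here that things can fail. Every ingredient of the expansion is twisted: the differential (\ref{A-gaibibun}) inserts $\phi_{A}^{-1}$ and $\phi_{A}^{\dagger}$ in specific slots, the interior product is defined through $\phi_{A}^{\dagger}(\omega)$ evaluated on $\phi_{A}(D)$, the Hom-Schouten Leibniz rule inserts $\phi_{A}$ on the spectator factor, and the anchor compatibilities (ii)(a),(b) only move $\varphi^{*}$ past anchors in one direction. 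You must verify that the twists land in positions that are actually exchanged by $A\leftrightarrow A^{*}$, not merely that each term has a plausible-looking partner; a single $\phi_{A}$ sitting on the wrong argument would leave a residual term that does not vanish and the symmetry would be broken. Your fallback via a Hom-analogue of the big bracket has the same problem one level up: constructing a graded bracket on $\Gamma(\Lambda^{\bullet}(A\oplus A^{*}))$ for which the two Hom-structures are encoded by square-zero elements is itself a nontrivial claim in the presence of the twists (the brackets here are $\phi$-twisted derivations, not derivations), and you propose to justify it by the very expansion you have not performed. To close the argument you need to either execute the pairing computation explicitly, including the bookkeeping of $\phi_{A}$, $\phi_{A}^{\dagger}$, $\varphi^{*}$, and the identity $(\phi_{A}^{\dagger})^{\dagger}=\phi_{A}$, or cite the computation in \cite{CLS} where it is done.
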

 
  \begin{example}[\cite{CLS}]\label{trivial example}
 For any Hom-Lie algebroid $(A,\varphi,\phi_{A}, [\cdot,\cdot]_{A},a)$ and its dual bundle $A^{*}$ with the trivial Hom-Lie algebroid structure $(\varphi ,\phi_{A}^{\dagger}, 0,0)$, a pair $(A,A^{*})$ is a Hom-Lie bialgebroid.
 \end{example}
 
 \begin{defn}[\cite{CLS}]\label{Hom-Courant algebroid def}
 A {\it Hom-Courant algebroid structure} on a vector bundle $E\longrightarrow M$ is a 5-tuple $(\varphi, \phi_{E},\langle \! \langle \cdot ,\cdot \rangle \! \rangle , \odot_{E},\rho_{E})$, where $(E,\varphi,\phi_{E})$ is an invertible Hom-bunble, $\langle \! \langle \cdot ,\cdot \rangle \! \rangle $ is a non-degenerate symmetric bilinear form on $E$, a map $\odot_{E}:\Gamma (E)\times \Gamma (E)\longrightarrow \Gamma (E)$ is bilinear and $\rho_{E}:E\longrightarrow \varphi^{!}TM$ is a bundle map, such that the following conditions are satisfied:
 \begin{enumerate}
 \item[(i)] $(\Gamma (E), \odot _{E}, \phi_{E})$ is a {\it Hom-Leibniz algebra}, i.e., for any $e_{1},e_{2}$ and $e_{3}$ in $\Gamma (E)$,
            \begin{enumerate}
            \item[(a)] $\phi_{E}(e_{1}\odot _{E}e_{2})=\phi _{E}(e_{1})\odot _{E}\phi_{E}(e_{2})$;
            \item[(b)] $\phi_{E}(e_{1})\odot _{E}(e_{2}\odot _{E}e_{3})=(e_{1}\odot _{E}e_{2})\odot _{E}\phi_{E}(e_{3})+\phi_{E}(e_{2})\odot _{E}(e_{1}\odot _{E}e_{3})$;
            \end{enumerate}
 \item[(ii)] $\rho_{E}\circ \phi_{E}=\mathrm{Ad}_{\varphi^{*}}\circ \rho_{E}$;
 \item[(iii)] For any $e_{1}$ and $e_{2}$ in $\Gamma (E)$, $\rho_{E}(e_{1}\odot _{E}e_{2})=[\rho_{E}(e_{1}),\rho_{E}(e_{2})]_{\varphi^{*}}$;
 \item[(iv)] For any $e$ in $\Gamma (E)$, $e\odot _{E}e=\mathcal{D}\langle \! \langle e,e\rangle \! \rangle $;%1/2をとって<<,>>の定義に1/2を組み込んだ
 \item[(v)] For any $e_{1}$ and $e_{2}$ in $\Gamma (E)$, $\langle \! \langle  \phi_{E}(e_{1}),\phi_{E}(e_{2})\rangle \! \rangle =\varphi^{*}\langle \! \langle e_{1},e_{2}\rangle \! \rangle $;
 \item[(vi)] For any $e_{1},e_{2}$ and $e$ in $\Gamma (E)$, 
            $$\rho_{E}(\phi_{E}(e))(\langle \! \langle e_{1},e_{2}\rangle \! \rangle )=\langle \! \langle e\odot _{E}e_{1},\phi_{E}(e_{2})\rangle \! \rangle +\langle \! \langle \phi_{E}(e_{1}),e\odot _{E}e_{2}\rangle \! \rangle ,$$
 \end{enumerate}
 where $\mathcal{D}:C^{\infty}(M)\longrightarrow \Gamma (E)$ is difined by
 \begin{eqnarray*}
 \langle \! \langle \mathcal{D}f,e\rangle \! \rangle =\frac{1}{2}\rho_{E}(e)f %1/2をとって<<,>>の定義に1/2を組み込んだ
 \end{eqnarray*}
 for any $e$ in $\Gamma (E)$ and $f$ in $C^{\infty}(M)$. We set
 \begin{eqnarray*}
 [\![ e_{1},e_{2}]\!]_{E}:=\frac{1}{2}(e_{1}\odot _{E}e_{2}-e_{2}\odot _{E}e_{1})
 \end{eqnarray*}
 for any $e_{1}$ and $e_{2}$ in $\Gamma (E)$. We call $[\![\cdot,\cdot ]\!]_{E}$ a {\it Hom-Courant bracket} on $\Gamma (E)$.
 \end{defn}
 
 \begin{rem}
In general, a Hom-Courant algebroid is NOT an $E$-Courant algebroid \cite{ChLS}. Hom-Courant algebroids are different generalizations from $E$-Courant algebroids.
 \end{rem}
 
 \begin{prop}[\cite{CLS}]\label{function multiplication for odot}
  Let $(E, \varphi, \phi_{E},\langle \! \langle \cdot ,\cdot \rangle \! \rangle , \odot_{E},\rho_{E})$ be a Hom-Courant algebroid. Then for any $e_{1},e_{2}$ in $\Gamma (E)$ and $f$ in $C^{\infty}(M)$,
 \begin{align*}
 e_{1}\odot_{E}(fe_{2})&=\varphi^{*}f\cdot e_{1}\odot_{E}e_{2}+\rho_{E}(\phi_{E}(e_{1}))(f)\phi_{E}(e_{2}),\\
 (fe_{1})\odot_{E}e_{2}&=\varphi^{*}f\cdot e_{1}\odot_{E}e_{2}-\rho_{E}(\phi_{E}(e_{2}))(f)\phi_{E}(e_{1})+\mathcal{D}f\varphi^{*}\langle \! \langle e_{1},e_{2}\rangle \! \rangle .
\end{align*}
 \end{prop}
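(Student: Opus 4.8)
The plan is to obtain the first identity directly from axiom (vi) of the Hom-Courant algebroid structure, using the non-degeneracy of $\langle \! \langle \cdot ,\cdot \rangle \! \rangle$ and the invertibility of $\phi_{E}$, and then to deduce the second identity by polarizing axiom (iv) and feeding in the first identity.

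For the first identity, I would first record that, since $\rho_{E}\circ\phi_{E}=\mathrm{Ad}_{\varphi^{*}}\circ\rho_{E}$ (axiom (ii)) and $\Gamma(\varphi^{!}TM)$ is identified with the $(\varphi^{*},\varphi^{*})$-derivations of $C^{\infty}(M)$, one has $\rho_{E}(\phi_{E}(e_{1}))(fh)=\rho_{E}(\phi_{E}(e_{1}))(f)\,\varphi^{*}h+\varphi^{*}f\cdot\rho_{E}(\phi_{E}(e_{1}))(h)$ for all $f,h\in C^{\infty}(M)$. Then I would apply axiom (vi) to the triple $(e_{1},fe_{2},e_{3})$ and expand both sides: on the left-hand side, write $\langle \! \langle fe_{2},e_{3}\rangle \! \rangle=f\langle \! \langle e_{2},e_{3}\rangle \! \rangle$ by $C^{\infty}(M)$-bilinearity, apply the derivation rule just recorded, use axiom (v) to replace $\varphi^{*}\langle \! \langle e_{2},e_{3}\rangle \! \rangle$ by $\langle \! \langle \phi_{E}(e_{2}),\phi_{E}(e_{3})\rangle \! \rangle$, and use axiom (vi) once more, now for $(e_{1},e_{2},e_{3})$, to rewrite $\rho_{E}(\phi_{E}(e_{1}))(\langle \! \langle e_{2},e_{3}\rangle \! \rangle)$; on the right-hand side, use the Hom-bundle identity $\phi_{E}(fe_{2})=\varphi^{*}f\cdot\phi_{E}(e_{2})$. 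The summand $\varphi^{*}f\cdot\langle \! \langle \phi_{E}(e_{2}),e_{1}\odot_{E}e_{3}\rangle \! \rangle$ then occurs on both sides and cancels, leaving $\langle \! \langle e_{1}\odot_{E}(fe_{2}),\phi_{E}(e_{3})\rangle \! \rangle=\langle \! \langle \varphi^{*}f\cdot e_{1}\odot_{E}e_{2}+\rho_{E}(\phi_{E}(e_{1}))(f)\,\phi_{E}(e_{2}),\phi_{E}(e_{3})\rangle \! \rangle$. Since $\phi_{E}$ is invertible, $\phi_{E}(e_{3})$ ranges over all of $\Gamma(E)$ as $e_{3}$ does, so non-degeneracy of $\langle \! \langle \cdot ,\cdot \rangle \! \rangle$ gives the first identity.

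For the second identity, I would start from the observation that $\langle \! \langle \mathcal{D}f,e\rangle \! \rangle=\frac{1}{2}\rho_{E}(e)f$ makes $\mathcal{D}$ an $\mathbb{R}$-linear map and, together with the derivation property of $\rho_{E}(e)$ and non-degeneracy, yields $\mathcal{D}(fh)=\varphi^{*}h\cdot\mathcal{D}f+\varphi^{*}f\cdot\mathcal{D}h$. Polarizing axiom (iv) and using symmetry of the pairing gives $e_{1}\odot_{E}e_{2}+e_{2}\odot_{E}e_{1}=2\mathcal{D}\langle \! \langle e_{1},e_{2}\rangle \! \rangle$. I would then replace $e_{1}$ by $fe_{1}$ in this polarized identity, substitute the first identity (with the roles of $e_{1}$ and $e_{2}$ exchanged) for $e_{2}\odot_{E}(fe_{1})$, apply the product rule for $\mathcal{D}$ to $\mathcal{D}\langle \! \langle fe_{1},e_{2}\rangle \! \rangle=\mathcal{D}(f\langle \! \langle e_{1},e_{2}\rangle \! \rangle)$, and use the polarized identity once more to handle the resulting $\mathcal{D}\langle \! \langle e_{1},e_{2}\rangle \! \rangle$ term; collecting what remains produces the second stated identity.

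I expect no conceptual obstacle: the argument is pure bookkeeping. The one point requiring care is that it is $\rho_{E}(\phi_{E}(e))$, rather than $\rho_{E}(e)$, that appears in axiom (vi) and in both identities, so one must consistently invoke axiom (ii) to know that this composite is the element of $\Gamma(\varphi^{!}TM)$ acting as a $(\varphi^{*},\varphi^{*})$-derivation; relatedly, one must track precisely which arguments are acted on by $\varphi^{*}$ and by $\phi_{E}$ so that axiom (v), the Hom-bundle identity, and the product rule for $\mathcal{D}$ line the terms up before non-degeneracy is invoked.
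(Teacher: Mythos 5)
The paper states this proposition without proof (it is imported from \cite{CLS}), so there is no in-text argument to compare against; your route is the natural one, and your derivation of the \emph{first} identity is correct: applying axiom (vi) to $(e_{1},fe_{2},e_{3})$, expanding with the $(\varphi^{*},\varphi^{*})$-derivation rule, axiom (v), and $\phi_{E}(fe_{2})=\varphi^{*}f\cdot\phi_{E}(e_{2})$, cancelling the common summand, and then invoking invertibility of $\phi_{E}$ together with non-degeneracy of the pairing does give the first formula.

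The issue is with the second identity: your own bookkeeping does not land on the stated formula. Polarizing axiom (iv) gives $e_{1}\odot_{E}e_{2}+e_{2}\odot_{E}e_{1}=2\,\mathcal{D}\langle\!\langle e_{1},e_{2}\rangle\!\rangle$ (the cross term of a quadratic form carries a $2$), so replacing $e_{1}$ by $fe_{1}$, substituting the first identity for $e_{2}\odot_{E}(fe_{1})$, and using $\mathcal{D}(fh)=\varphi^{*}h\cdot\mathcal{D}f+\varphi^{*}f\cdot\mathcal{D}h$ yields
\begin{align*}
(fe_{1})\odot_{E}e_{2}=\varphi^{*}f\cdot e_{1}\odot_{E}e_{2}-\rho_{E}(\phi_{E}(e_{2}))(f)\,\phi_{E}(e_{1})+2\,\mathcal{D}f\,\varphi^{*}\langle\!\langle e_{1},e_{2}\rangle\!\rangle ,
\end{align*}
with coefficient $2$ on the last term rather than $1$: the $\varphi^{*}f\cdot e_{2}\odot_{E}e_{1}$ terms cancel exactly and nothing halves the $\mathcal{D}f$ contribution. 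A check in the standard Hom-Courant algebroid $A\oplus A^{*}$ with $e_{1}=X\in\Gamma(A)$ and $e_{2}=\eta\in\Gamma(A^{*})$ confirms this: there $(fX)\odot_{E}\eta=\mathcal{L}_{fX}^{A}\eta=\varphi^{*}f\,\mathcal{L}_{X}^{A}\eta+\varphi^{*}\langle\eta,X\rangle\,d_{A}f$, while $\langle\!\langle X,\eta\rangle\!\rangle=\frac{1}{2}\langle\eta,X\rangle$ and $\mathcal{D}f=d_{A}f$, which forces the factor $2$ (the classical Courant-algebroid analogue likewise reads $2\langle e_{1},e_{2}\rangle\mathcal{D}f$). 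So either the proposition as printed is missing a factor of $2$, or your final ``collecting what remains'' step contains an arithmetic slip; as written, your assertion that the computation ``produces the second stated identity'' is not correct, and you should either correct the coefficient or explicitly flag the discrepancy with the statement.
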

 
 For the Hom-Courant bracket $[\![\cdot,\cdot ]\!]_{E}$ on a Hom-Courant algebroid $(E, \varphi, \phi_{E},$ $\langle \! \langle \cdot ,\cdot \rangle \! \rangle , \odot_{E},\rho_{E})$, the following proposition follows immediately from Theorem 5.9 in \cite{CLS}.
  
  \begin{prop}\label{Jacobi-modoki}
  The Hom-Courant bracket $[\![\cdot,\cdot ]\!]_{E}$ on a Hom-Courant algebroid $(E, \varphi, \phi_{E},\langle \! \langle \cdot ,\cdot \rangle \! \rangle , \odot_{E},\rho_{E})$ satisfies that for any $e_{1},e_{2}$ and $e_{3}$ in $\Gamma (E)$,
 \begin{align*}
\sum_{\mathrm{Cycl}(e_{1},e_{2},e_{3})}^{}\!\![\![ [\![ e_{1},e_{2}]\!]_{E}, \phi_{E}(e_{3})]\!]_{E}=\mathcal{D}T(e_{1},e_{2},e_{3}),
\end{align*}
where $T:\Gamma(E)\times\Gamma(E)\times\Gamma(E)\longrightarrow C^{\infty}(M)$ is given by
 \begin{align*}
T(e_{1},e_{2},e_{3}):=\frac{1}{3}\sum_{\mathrm{Cycl}(e_{1},e_{2},e_{3})}^{}\!\!\langle \! \langle [\![ e_{1},e_{2}]\!]_{E}, \phi_{E}(e_{3})\rangle \!\rangle
\end{align*}
for any $e_{1},e_{2}$ and $e_{3}$ in $\Gamma (E)$.
 \end{prop}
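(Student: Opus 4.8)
The plan is to deduce Proposition \ref{Jacobi-modoki} directly from the cited Theorem 5.9 in \cite{CLS}, which (for a Hom-Courant algebroid) should express the Hom-Jacobiator of the \emph{Dorfman}-type product $\odot_{E}$ or, equivalently, give a controlled failure of the Hom-Jacobi identity for the skew-symmetrized bracket $[\![\cdot,\cdot]\!]_{E}$. Concretely, I would first write out $[\![\cdot,\cdot]\!]_{E}$ in terms of $\odot_{E}$ via the defining relation $[\![ e_{1},e_{2}]\!]_{E}=\tfrac12(e_{1}\odot_{E}e_{2}-e_{2}\odot_{E}e_{1})$, and expand the cyclic sum $\sum_{\mathrm{Cycl}}[\![[\![ e_{1},e_{2}]\!]_{E},\phi_{E}(e_{3})]\!]_{E}$ into a sum of terms of the form $(e_{i}\odot_{E}e_{j})\odot_{E}\phi_{E}(e_{k})$ and $\phi_{E}(e_{k})\odot_{E}(e_{i}\odot_{E}e_{j})$. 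The Hom-Leibniz identity (condition (i)(b) of Definition \ref{Hom-Courant algebroid def}) lets me rewrite every nested product of the second type, so that after collecting terms the antisymmetric part reorganizes into the expression appearing in Theorem 5.9 of \cite{CLS}, while the symmetric ``leftover'' is exactly a $\mathcal{D}$-exact term.

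The key identity to exploit is condition (iv), $e\odot_{E}e=\mathcal{D}\langle\!\langle e,e\rangle\!\rangle$, polarized to $e_{1}\odot_{E}e_{2}+e_{2}\odot_{E}e_{1}=\mathcal{D}\langle\!\langle e_{1},e_{2}\rangle\!\rangle$ (using $2\langle\!\langle e_{1},e_{2}\rangle\!\rangle=\langle\!\langle e_{1}+e_{2},e_{1}+e_{2}\rangle\!\rangle-\langle\!\langle e_{1},e_{1}\rangle\!\rangle-\langle\!\langle e_{2},e_{2}\rangle\!\rangle$), so that the symmetrized product is always $\mathcal{D}$-exact. This is what converts the discrepancy between the Hom-Leibniz (Dorfman) Jacobiator, which one gets for free from (i)(b), and the Hom-Courant (skew) Jacobiator into a $\mathcal{D}$ of a function; and tracking the coefficients carefully should produce precisely $T(e_{1},e_{2},e_{3})=\tfrac13\sum_{\mathrm{Cycl}}\langle\!\langle[\![ e_{1},e_{2}]\!]_{E},\phi_{E}(e_{3})\rangle\!\rangle$. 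I would also need property (v) (the $\phi_{E}$-invariance of $\langle\!\langle\cdot,\cdot\rangle\!\rangle$) and (ii)–(iii), (vi) only insofar as they are already used inside the proof of Theorem 5.9 of \cite{CLS}; since the statement says the proposition ``follows immediately'' from that theorem, I expect that the bulk of the bookkeeping is already done there and my task is the translation step from $\odot_{E}$ to $[\![\cdot,\cdot]\!]_{E}$.

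The main obstacle I anticipate is purely combinatorial rather than conceptual: keeping the signs and the $\phi_{E}$-placements straight when expanding a cyclic sum of doubly-nested brackets, each of which is itself an antisymmetrization of $\odot_{E}$, so that one is really manipulating $2^{3}\cdot(\text{cyclic})$ terms before cancellation. The potential pitfall is that $\odot_{E}$ is not skew, so one cannot freely swap arguments — every swap costs a $\mathcal{D}$-exact correction via the polarized (iv) — and one must verify that all these corrections survive only inside the image of $\mathcal{D}$ and assemble into the single term $\mathcal{D}T$. A clean way to organize this is to first establish, as an intermediate lemma, the Dorfman version $\phi_{E}(e_{1})\odot_{E}(e_{2}\odot_{E}e_{3}) = (e_{1}\odot_{E}e_{2})\odot_{E}\phi_{E}(e_{3}) + \phi_{E}(e_{2})\odot_{E}(e_{1}\odot_{E}e_{3})$ rewritten as a vanishing Dorfman-Jacobiator (which is literally (i)(b)), then quote Theorem 5.9 for the conversion to the skew bracket, and finally only verify that the symmetric remainder equals $\mathcal{D}T$ using the polarized identity (iv). I would present the proof at that level of detail, referring to \cite{CLS} for the computation already carried out there and spelling out only the antisymmetrization step and the identification of $T$.
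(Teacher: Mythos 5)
Your overall strategy --- rewrite $[\![\cdot,\cdot]\!]_{E}$ in terms of $\odot_{E}$, use the Hom-Leibniz identity (i)(b) to control the nested products, and push the symmetric remainders into the image of $\mathcal{D}$ --- is the standard Dorfman-to-Courant skew-symmetrization argument, and it is what the paper itself relies on: the paper gives no computation of its own but simply observes that the proposition follows from Theorem 5.9 in \cite{CLS}. So the route is the intended one. Two concrete points need repair, however. First, your polarized form of axiom (iv) is off by a factor of $2$: expanding $(e_{1}+e_{2})\odot_{E}(e_{1}+e_{2})=\mathcal{D}\langle\!\langle e_{1}+e_{2},e_{1}+e_{2}\rangle\!\rangle$ and subtracting the diagonal terms gives $e_{1}\odot_{E}e_{2}+e_{2}\odot_{E}e_{1}=2\,\mathcal{D}\langle\!\langle e_{1},e_{2}\rangle\!\rangle$, i.e.\ $e_{1}\odot_{E}e_{2}=[\![ e_{1},e_{2}]\!]_{E}+\mathcal{D}\langle\!\langle e_{1},e_{2}\rangle\!\rangle$. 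If you carry your version through the bookkeeping you will not recover the coefficient $\tfrac{1}{3}$ in $T$.

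Second, and more substantively: once you substitute $e_{i}\odot_{E}e_{j}=[\![ e_{i},e_{j}]\!]_{E}+\mathcal{D}\langle\!\langle e_{i},e_{j}\rangle\!\rangle$ into the outer bracket, the cyclic sum contains terms $[\![\mathcal{D}g,\phi_{E}(e_{k})]\!]_{E}=\mathcal{D}\langle\!\langle\mathcal{D}g,\phi_{E}(e_{k})\rangle\!\rangle-\phi_{E}(e_{k})\odot_{E}\mathcal{D}g$, and the polarized (iv) alone does \emph{not} tell you that $\phi_{E}(e_{k})\odot_{E}\mathcal{D}g$ is $\mathcal{D}$-exact. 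You need the derived identity expressing $e\odot_{E}\mathcal{D}g$ as a $\mathcal{D}$-exact term (equivalently, in the classical case, $\mathcal{D}g\odot_{E}e=0$), whose Hom-analogue requires axiom (vi) and the relation $\rho_{E}\circ\mathcal{D}=0$; it does not follow from (i)(b) and (iv). You gesture at this by deferring to the internals of Theorem 5.9 of \cite{CLS}, which is acceptable if that theorem or its proof actually supplies these identities, but as written your outline treats every correction as automatically $\mathcal{D}$-exact, and that is the one place the argument could genuinely fail rather than merely become tedious. State and prove (or explicitly cite) the lemma on $e\odot_{E}\mathcal{D}g$ before the main expansion, and the rest is indeed only combinatorics.
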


The following theorem shows the relationship between a Hom-Lie bialgebroid and a Hom-Courant algebroid.
 
 \begin{theorem}[\cite{CLS}]\label{Hom-Lie bi to Hom-Courant}
 Let $(A,A^{*})$ be a Hom-Lie bialgebroid over $M$. Then $(E=A\oplus A^{*},\varphi, \phi_{E}, \langle \! \langle \cdot ,\cdot \rangle \! \rangle ,\odot_{E}, \rho_{E})$ is a Hom-Courant algebroid, where
 \begin{align*}
\phi_{E}(X+\xi )&:=\phi_{A}(X)+\phi_{A}^{\dagger}(\xi);\\
\langle \! \langle X+\xi,Y+\eta \rangle \! \rangle &:=\frac{1}{2}\left(\langle\xi,Y\rangle+\langle\eta,X\rangle \right);\\%1/2をとって<<,>>の定義に1/2を組み込んだ
(X+\xi)\odot_{E}(Y+\eta)&:=([X,Y]_{A}+\mathcal{L}_{\xi}^{A^{*}}Y-\iota _{\eta}^{A^{*}}d_{A^{*}}(\phi_{A}^{-1}(X)))\\
                        &\qquad +([\xi,\eta]_{A^{*}}+\mathcal{L}_{X}^{A}\eta-\iota _{Y}^{A}d_{A}((\phi_{A}^{\dagger})^{-1}(\xi)));\\
\rho_{E}(X+\xi)&:=a_{A}(X)+a_{A^{*}}(\xi)
\end{align*}
for any $X,Y$ in $\Gamma (A)$, $\xi $ and $\eta $ in $\Gamma (A^{*})$. By easy calculation, we show that the Hom-Courant bracket on $E$ is given by for any $X,Y$ in $\Gamma (A)$, $\xi$ and $\eta $ in $\Gamma (A^{*})$,
 \begin{align}\label{Hom-Courant bracket for Hom-Lie bi}
[\![ X+\xi,Y+\eta]\!]_{E}&=\left([X,Y]_{A}+\mathcal{L}_{\xi}^{A^{*}}Y-\mathcal{L}_{\eta}^{A^{*}}X+\frac{1}{2}d_{A^{*}}(\langle X,\eta\rangle -\langle Y,\xi\rangle)\right) \nonumber \\
                        &\quad +\left([\xi,\eta]_{A^{*}}+\mathcal{L}_{X}^{A}\eta-\mathcal{L}_{Y}^{A}\xi-\frac{1}{2}d_{A}(\langle X,\eta\rangle -\langle Y,\xi\rangle)\right).
\end{align}
 \end{theorem}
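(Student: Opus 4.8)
The plan is to verify, one at a time, the six conditions (i)--(vi) of Definition~\ref{Hom-Courant algebroid def} for the $5$-tuple $(\varphi,\phi_{E},\langle\!\langle\cdot,\cdot\rangle\!\rangle,\odot_{E},\rho_{E})$ on $E=A\oplus A^{*}$, and then to read off the Hom-Courant bracket \eqref{Hom-Courant bracket for Hom-Lie bi} from the defining formula for $\odot_{E}$. The structural preliminaries are immediate: since $(A,\varphi,\phi_{A})$ and $(A^{*},\varphi,\phi_{A}^{\dagger})$ are invertible Hom-bundles, so is $(E,\varphi,\phi_{E})$ with $\phi_{E}=\phi_{A}\oplus\phi_{A}^{\dagger}$; the bilinear form $\langle\!\langle\cdot,\cdot\rangle\!\rangle$ is (a rescaling of) the canonical duality pairing of $A$ with $A^{*}$, hence non-degenerate and symmetric; and condition (v) is exactly the identity $\langle\phi_{A}^{\dagger}(\xi),\phi_{A}(Y)\rangle=\varphi^{*}\langle\xi,Y\rangle$ that defines $\phi_{A}^{\dagger}$. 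Condition (ii) for $\rho_{E}$ follows by adding the two anchor-equivariance relations $a_{A}\circ\phi_{A}=\mathrm{Ad}_{\varphi^{*}}\circ a_{A}$ and $a_{A^{*}}\circ\phi_{A}^{\dagger}=\mathrm{Ad}_{\varphi^{*}}\circ a_{A^{*}}$ of Remark~\ref{def iikae}.

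Next I would dispatch the remaining ``compatibility'' axioms. For condition (iv): expanding $(X+\xi)\odot_{E}(X+\xi)$ annihilates the two Hom-Lie-bracket terms by skew-symmetry, and the Hom-Cartan formula \eqref{A-Lie derivative}, applied once for $A$ and once for $A^{*}$, yields $\mathcal{L}_{X}^{A}\xi-\iota_{X}^{A}d_{A}((\phi_{A}^{\dagger})^{-1}(\xi))=d_{A}\langle X,\xi\rangle$ and $\mathcal{L}_{\xi}^{A^{*}}X-\iota_{\xi}^{A^{*}}d_{A^{*}}(\phi_{A}^{-1}(X))=d_{A^{*}}\langle X,\xi\rangle$; summing these identifies the operator $\mathcal{D}$ of Definition~\ref{Hom-Courant algebroid def} with $d_{A}+d_{A^{*}}$ (a one-line check against the defining relation $\langle\!\langle\mathcal{D}f,e\rangle\!\rangle=\tfrac{1}{2}\rho_{E}(e)f$) and gives $e\odot_{E}e=\mathcal{D}\langle\!\langle e,e\rangle\!\rangle$. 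Conditions (iii) and (vi) I would verify by expanding $\rho_{E}$, resp.\ the symmetric form, on the four bidegree components of $e_{1}\odot_{E}e_{2}$ and using that $a_{A}$ and $a_{A^{*}}$ are Hom-Lie algebroid morphisms together with the first-order identities for $\mathcal{L}^{A}$ and $\mathcal{L}^{A^{*}}$ recorded in Section~\ref{section:Pleriminaries}; the mixed terms then recombine as required. Condition (i)(a), $\phi_{E}(e_{1}\odot_{E}e_{2})=\phi_{E}(e_{1})\odot_{E}\phi_{E}(e_{2})$, follows from $\phi_{A}$ being a Hom-Lie algebra morphism on $\Gamma(A)$, $\phi_{A}^{\dagger}$ likewise on $\Gamma(A^{*})$, and the $\phi_{A}$-/$\phi_{A}^{\dagger}$-equivariance of $d_{A}$, $d_{A^{*}}$, $\iota^{A}$, $\iota^{A^{*}}$, $\mathcal{L}^{A}$ and $\mathcal{L}^{A^{*}}$.

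The substantive step is condition (i)(b), the Hom-Leibniz identity for $\odot_{E}$. Since every operation entering the formula for $\odot_{E}$ is $\mathbb{R}$-linear, by trilinearity it suffices to verify the identity when each of $e_{1},e_{2},e_{3}$ lies in $\Gamma(A)$ or in $\Gamma(A^{*})$, and I would then organize the computation by bidegree, i.e.\ by how many of the three entries lie in $\Gamma(A)$ versus $\Gamma(A^{*})$. The $(3,0)$ and $(0,3)$ pieces reduce to the Hom-Jacobi identities of $[\cdot,\cdot]_{A}$ and $[\cdot,\cdot]_{A^{*}}$ respectively. The mixed $(2,1)$ and $(1,2)$ pieces are precisely where the Hom-Lie bialgebroid compatibility
\begin{align*}
d_{A^{*}}[X,Y]_{A}=[d_{A^{*}}X,\phi_{A}(Y)]_{A}+[\phi_{A}(X),d_{A^{*}}Y]_{A}
\end{align*}
and, via Proposition~\ref{Hom-Lie bilagebroid gyaku ok}, its $A\leftrightarrow A^{*}$ counterpart must enter, combined with the within-algebroid relations between $d$, $\iota$ and $\mathcal{L}$ (the derivation property of $d_{A}$, the Hom-Cartan formula, and the $[\mathcal{L},\iota]$-type identities), all of which carry their $\phi_{A}$- and $\phi_{A}^{\dagger}$-twists. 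I expect the main obstacle to be not any single conceptual point but the bookkeeping of these twists across the large number of terms; that is where the computation is most delicate and most error-prone.

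Finally, the bracket formula \eqref{Hom-Courant bracket for Hom-Lie bi} is obtained by substituting the defining expression for $\odot_{E}$ into $[\![e_{1},e_{2}]\!]_{E}=\tfrac{1}{2}(e_{1}\odot_{E}e_{2}-e_{2}\odot_{E}e_{1})$ and rewriting, by means of the two Hom-Cartan identities used for condition (iv), the interior-product terms as $\iota_{\xi}^{A^{*}}d_{A^{*}}(\phi_{A}^{-1}(Y))=\mathcal{L}_{\xi}^{A^{*}}Y-d_{A^{*}}\langle Y,\xi\rangle$ and its $A$-analogue; the $A$- and $A^{*}$-components then assemble into the stated symmetric expression.
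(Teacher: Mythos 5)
You should first be aware that the paper contains no proof of this theorem: it is quoted from \cite{CLS} as a preliminary, and the only computation the author claims as his own is the passage from $\odot_{E}$ to the Hom-Courant bracket (\ref{Hom-Courant bracket for Hom-Lie bi}), dismissed as an ``easy calculation''. Your derivation of that formula --- antisymmetrize $\odot_{E}$ and convert the interior-product terms via the Hom-Cartan formula (\ref{A-Lie derivative}), using $\iota_{(\phi_{A}^{\dagger})^{-1}(\xi)}^{A^{*}}(\phi_{A}^{-1}(Y))=\langle Y,\xi\rangle$ --- is correct and is exactly the intended calculation, with all the $\phi$-twists in the right places; the same identity correctly disposes of axiom (iv) and identifies $\mathcal{D}$ with $d_{A}+d_{A^{*}}$. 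The rest of your proposal is a sound outline of the axiom-by-axiom verification that actually lives in \cite{CLS}, organized in the standard way (bidegree decomposition, with the Hom-Jacobi identities handling the pure pieces and the bialgebroid compatibility handling the mixed ones). One caveat worth flagging: you locate the use of the Hom-Lie bialgebroid condition exclusively in the mixed components of the Hom-Leibniz identity (i)(b), and assert that (iii) and (vi) follow from the two algebroid structures separately plus ``recombination''. That is not quite right: for instance, axiom (iii) applied to $e_{1}=X$, $e_{2}=\eta$ demands $a_{A^{*}}(\mathcal{L}_{X}^{A}\eta)-a_{A}(\iota_{\eta}^{A^{*}}d_{A^{*}}(\phi_{A}^{-1}(X)))=[a_{A}(X),a_{A^{*}}(\eta)]_{\varphi^{*}}$, a genuinely cross-structure identity (the Hom-analogue of Proposition 3.6 of \cite{LWX}) that must itself be derived from the bialgebroid compatibility and does not follow from Remark \ref{def iikae} applied to $A$ and $A^{*}$ individually. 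This is a misattribution rather than a fatal gap, but in a full write-up those cross-anchor lemmas would have to be proved before (iii) and (vi) can be checked.
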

 
In particular, for any Hom-Lie algebroid $A$, the Hom-Courant algebroid $A\oplus A^{*}$ is called {\it the standard Hom-Courant algebroid} of $A$, where $(A,A^{*})$ is a Lie bialgebroid in Example \ref{trivial example}.
 
%  \begin{example}[\cite{CLS}]
% $A$から作る標準的なHom-Courant亜代数
% \end{example}

\section{Hom-Poisson-Nijenhuis structures}\label{Section:Hom-Poisson-Nijenhuis structures}

 \subsection{Hom-Poisson structures}\label{subsection:Hom-Poisson structures}
 
In \cite{CLS}, the authors defined Hom-Poisson structures on a manifold $M$. In this subsection, we introduce Hom-Poisson structures on arbitrary Hom-Lie algebroid and show similar properties to that on $M$. Moreover we show that there exists a one-to-one correspondence between the Hom-Poisson structures on $M$ and the pairs consisting of a Poisson structure on $M$ and a Poisson isomorphism for it.  
%\cite{CLS}では$\varphi^{!}TM$上でのみ考えていたがここでは一般のHom-Poisson構造を考える。
 
 \begin{defn}\label{Hom-Poisson structure-def}
 Let $(A, \varphi,\phi_{A},[\cdot,\cdot]_{A},a_{A})$ be a Hom-Lie algebroid and $\pi $ a $2$-section in $\Gamma (\Lambda ^{2}A)$. Then $\pi $ is a {\it Hom-Poisson structure} on $A$ if the following conditions hold: 
 \begin{align}
&[\pi ,\pi ]_{A}=0,\\
&\phi_{A}(\pi )=\pi.\label{phi-invariant-pi}
\end{align}
The condition (\ref{phi-invariant-pi}) is called the {\it $\phi_{A}$-invariance}.
 \end{defn}
 
% \begin{example}[自明な例として$\varphi^{!}T^{*}M$上のHom-Poisson構造を紹介]
 In \cite{CLS}, the authors defined a {\it Hom-Poisson structure} $(\pi ,\varphi)$ on a manifold $M$ by the condition that $\pi $ is a Hom-Poisson structure on $\varphi^{!}TM$ in the sense of Definition \ref{Hom-Poisson structure-def}. As well-known, the map
 \begin{align*}
h:&TM\longrightarrow \varphi^{!}TM=\{(p,v)\in M\times TM\ |\ \varphi(p)=\pi_{TM}(v)\},\\
  &v\mapsto (\varphi^{-1}(\pi_{TM}(v)),v)
\end{align*}
is a bundle isomorphism. Therefore a map $(\cdot )^{!}$ is an isomorphism since the induced map $\Gamma (TM)\longrightarrow \Gamma (\varphi^{!}TM)$ by $h$ coincides with $(\cdot )^{!}$. Moreover the map $(\cdot )^{!}$ is extended to the map $\Gamma (\Lambda ^{*}TM)\longrightarrow \Gamma (\Lambda ^{*}\varphi^{!}TM)$. Then we obtain $[X^{!},Y^{!}]_{\varphi^{*}}=\mathrm{Ad}_{\varphi^{*}}[X,Y]^{!}$ for any $X$ and $Y$ in $\mathfrak{X}(M)$. In fact, for any $f$ in $C^{\infty}(M)$ and $p$ in $M$,
 \begin{align*}
([X^{!},Y^{!}]_{\varphi^{*}}(f))(p)&=((\varphi^{*}\circ X^{!}\circ (\varphi^{*})^{-1}\circ Y^{!}\circ (\varphi^{*})^{-1}\\
                                &\qquad-\varphi^{*}\circ Y^{!}\circ (\varphi^{*})^{-1}\circ X^{!}\circ (\varphi^{*})^{-1})(f))(p)\\
                                &=\varphi^{*}(X^{!}((\varphi^{*})^{-1}(Y^{!}((\varphi^{*})^{-1}f))))(p)\\
                                &\qquad-\varphi^{*}(Y^{!}((\varphi^{*})^{-1}(X^{!}((\varphi^{*})^{-1}f))))(p)\\
                                &=X_{\varphi^{2}(p)}(Y((\varphi^{*})^{-1}(f)))-Y_{\varphi^{2}(p)}(X((\varphi^{*})^{-1}(f)))\\
                                &=[X,Y]_{\varphi^{2}(p)}((\varphi^{*})^{-1}(f))\\
                                &=(((\varphi^{*})^{2}\circ [X,Y]\circ (\varphi^{*})^{-1})(f))(p)\\
                                &=(\mathrm{Ad}_{\varphi^{*}}([X,Y]^{!})(f))(p).
\end{align*}
By using properties of the Schouten bracket $[\cdot,\cdot]$ on $\Gamma (\Lambda ^{*}TM)$ and the Hom-Schouten bracket $[\cdot,\cdot]_{\varphi}$ on $\Gamma (\Lambda ^{*}\varphi^{!}TM)$, we have $[D_{1}^{!},D_{2}^{!}]_{\varphi^{*}}=\mathrm{Ad}_{\varphi^{*}}[D_{1},D_{2}]^{!}$ for any $D_{i}$ in $\Gamma (\Lambda ^{*}TM)$.
% $[D_{1}^{!},D_{2}^{!}]_{\varphi^{*}}=\mathrm{Ad}_{\varphi^{*}}[D_{1},D_{2}]^{!}$
% \begin{align*}
%[X,Y\wedge Z]^{!}&=[X,Y]^{!}\wedge Z^{!}-Y^{!}\wedge [X,Z]^{!}\\
%                 &=\mathrm{Ad}_{\varphi^{*}}^{-1}[X^{!},Y^{!}]_{\varphi^{*}}\wedge Z^{!}-Y^{!}\wedge\mathrm{Ad}_{\varphi^{*}}^{-1}[X^{!},Z^{!}]_{\varphi^{*}}\\
%                 &=\mathrm{Ad}_{\varphi^{*}}^{-1}([X^{!},Y^{!}]_{\varphi^{*}}\wedge \mathrm{Ad}_{\varphi^{*}}(Z^{!})-\mathrm{Ad}_{\varphi^{*}}(Y^{!})\wedge[X^{!},Z^{!}]_{\varphi^{*}})\\
%                 &=\mathrm{Ad}_{\varphi^{*}}^{-1}[X^{!},Y^{!}\wedge Z^{!}]_{\varphi^{*}}.
%\end{align*}
 Therefore for any $2$-vector field $\pi $ in $\Gamma (\Lambda ^{2}TM)$, 
\begin{align*}
[\pi^{!},\pi^{!}]_{\varphi^{*}}=\mathrm{Ad}_{\varphi^{*}}[\pi,\pi]^{!}.
\end{align*}
Since $\mathrm{Ad}_{\varphi^{*}}$ and $(\cdot)^{!}$ are isomorphisms, $\pi $ is a Poisson structure, i.e., $[\pi,\pi]=0$ if and only if $[\pi^{!},\pi^{!}]_{\varphi^{*}}=0$. We also obtain $(\varphi_{*}X)^{!}=\mathrm{Ad}_{\varphi^{*}}^{-1}(X^{!})$ for any $X$ in $\mathfrak{X}(M)$. In fact, for any $f$ in $C^{\infty}(M)$ and $p$ in $M$,
 \begin{align*}
((\varphi_{*}X)^{!}(f))(p)&=(\varphi_{*}X)_{\varphi(p)}f
                          =(\varphi_{*p}X_{p})f=\langle (df)_{\varphi(p)},\varphi_{*p}X_{p}\rangle\\
                          &=\langle \varphi^{*}_{p}(df)_{p},X_{p}\rangle=\langle d(\varphi^{*}f)_{p},X_{p}\rangle
                          =X_{p}(\varphi^{*}f)\\
                          &=X_{\varphi^{-1}(p)}^{!}(\varphi^{*}f)
                          =((\varphi^{*})^{-1}(X^{!}(\varphi^{*}f)))(p)\\
                          &=((\mathrm{Ad}_{\varphi^{*}}^{-1}(X^{!}))(f))(p).
\end{align*}
%where $c:(-\varepsilon, \varepsilon)\longrightarrow M$ is a curve such that $c(0)=p$ and $c'(0)=X_{p}$.
Then it follows that $\varphi$ is a Poisson isomorphism for a Poisson structure $\pi $, i.e., $\varphi_{*}\pi=\pi$ if and only if $\pi^{!}$ in $\Gamma (\Lambda ^{2}\varphi^{!}TM)$ is $\mathrm{Ad}_{\varphi^{*}}$-invariant, i.e., $\mathrm{Ad}_{\varphi^{*}}(\pi^{!})=\pi^{!}$. Therefore a Hom-Poisson structure $(\pi,\varphi)$ on $\varphi^{!}TM$ is nothing but a pair of a Poisson structure on $M$ and a Poisson isomorphism. From the above, we can obtain the following theorem.
% \end{example}

\begin{theorem}
There exists the following one-to-one correspondence:
\begin{eqnarray*}
\left\{
\begin{aligned}
&\mbox{Pairs consisting of} \\
&\mbox{a Poisson structure on }M \mbox{ and}\\
&\mbox{a Poisson isomorphism for it}
\end{aligned}
\right\}
&\xrightarrow{1\mbox{-}1}&\left\{\mbox{Hom-Poisson structures on }M\right\} \\
(\pi ,\varphi)         &\mapsto&(\pi^{!},\varphi).
\end{eqnarray*}
\end{theorem}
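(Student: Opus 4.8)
The plan is to leverage the bundle isomorphism $h\colon TM\longrightarrow \varphi^{!}TM$ recalled above, whose induced map on sections is exactly the pull-back operation $(\cdot)^{!}$, together with the two bracket identities computed in the paragraphs preceding the statement. First I would record that $(\cdot)^{!}$ extends to a linear isomorphism $\Gamma(\Lambda^{2}TM)\longrightarrow \Gamma(\Lambda^{2}\varphi^{!}TM)$, so that the assignment $(\pi,\varphi)\mapsto (\pi^{!},\varphi)$ is automatically injective and admits the evident inverse, obtained by applying the inverse of $(\cdot)^{!}$ to the bivector while leaving $\varphi$ untouched. Since the diffeomorphism $\varphi$ is carried along unchanged on both sides, it then suffices to prove that, for each fixed $\varphi$, the operation $(\cdot)^{!}$ maps the set of Poisson bivectors $\pi$ on $M$ with $\varphi_{*}\pi=\pi$ bijectively onto the set of Hom-Poisson structures on $\varphi^{!}TM$ in the sense of Definition \ref{Hom-Poisson structure-def}.

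The two equivalences doing the real work are the following. From $[D_{1}^{!},D_{2}^{!}]_{\varphi^{*}}=\mathrm{Ad}_{\varphi^{*}}[D_{1},D_{2}]^{!}$ with $D_{1}=D_{2}=\pi$, together with the fact that $\mathrm{Ad}_{\varphi^{*}}$ and $(\cdot)^{!}$ are isomorphisms, one obtains $[\pi,\pi]=0$ if and only if $[\pi^{!},\pi^{!}]_{\varphi^{*}}=0$. Likewise, from $(\varphi_{*}X)^{!}=\mathrm{Ad}_{\varphi^{*}}^{-1}(X^{!})$ extended to $\Gamma(\Lambda^{2}TM)$, one obtains $\varphi_{*}\pi=\pi$ if and only if $\mathrm{Ad}_{\varphi^{*}}(\pi^{!})=\pi^{!}$, which is exactly the $\phi_{A}$-invariance of $\pi^{!}$ for the Hom-Lie algebroid $\varphi^{!}TM$, whose structure map $\phi_{A}$ is $\mathrm{Ad}_{\varphi^{*}}$.

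Assembling these: $\pi$ is a Poisson structure on $M$ for which $\varphi$ is a Poisson isomorphism precisely when $[\pi,\pi]=0$ and $\varphi_{*}\pi=\pi$, which by the two equivalences holds precisely when $[\pi^{!},\pi^{!}]_{\varphi^{*}}=0$ and $\mathrm{Ad}_{\varphi^{*}}(\pi^{!})=\pi^{!}$, i.e.\ precisely when $(\pi^{!},\varphi)$ is a Hom-Poisson structure on $M$ in the sense of \cite{CLS}. For surjectivity, every $\rho\in\Gamma(\Lambda^{2}\varphi^{!}TM)$ is of the form $(\rho')^{!}$ for a unique $\rho'\in\Gamma(\Lambda^{2}TM)$, and running the same chain of equivalences in reverse shows that if $(\rho,\varphi)$ is a Hom-Poisson structure then $\rho'$ is a Poisson structure admitting $\varphi$ as a Poisson isomorphism; injectivity has already been noted. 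Hence the map is a bijection.

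The only genuine obstacle is bookkeeping: one must check that the section-level identities $[X^{!},Y^{!}]_{\varphi^{*}}=\mathrm{Ad}_{\varphi^{*}}[X,Y]^{!}$ and $(\varphi_{*}X)^{!}=\mathrm{Ad}_{\varphi^{*}}^{-1}(X^{!})$, verified above only on $\mathfrak{X}(M)$, propagate to the whole exterior algebra $\Gamma(\Lambda^{*}TM)$. This reduces to observing that $(\cdot)^{!}$ and $\mathrm{Ad}_{\varphi^{*}}$ are homomorphisms for the wedge product and then invoking the graded Leibniz rule characterising the Schouten bracket on $\Gamma(\Lambda^{*}TM)$ and the Hom-Schouten bracket on $\Gamma(\Lambda^{*}\varphi^{!}TM)$, so that an identity true on generators is true in general. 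Beyond this there is no conceptual difficulty, since all the computational content is already contained in the discussion preceding the theorem.
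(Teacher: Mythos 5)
Your proposal is correct and follows essentially the same route as the paper: it uses the bundle isomorphism $h$ (so that $(\cdot)^{!}$ is a bijection on bivectors), the identity $[\pi^{!},\pi^{!}]_{\varphi^{*}}=\mathrm{Ad}_{\varphi^{*}}[\pi,\pi]^{!}$ to translate $[\pi,\pi]=0$, and the identity $(\varphi_{*}X)^{!}=\mathrm{Ad}_{\varphi^{*}}^{-1}(X^{!})$ to translate $\varphi_{*}\pi=\pi$ into $\mathrm{Ad}_{\varphi^{*}}$-invariance of $\pi^{!}$, exactly as in the discussion the paper gives before stating the theorem. The ``bookkeeping'' step you flag (propagating the section-level identities to the exterior algebra via the Leibniz rules for the Schouten and Hom-Schouten brackets) is also how the paper handles it.
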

 
From now on, we consider Hom-Poisson structures on arbitrary Hom-Lie algebroid and generalize the results for Hom-Poisson structures on $M$ in \cite{CLS}.
 
 \begin{prop}\label{pi invariance and commutativity}
 Let $(A, \varphi,\phi_{A},[\cdot,\cdot]_{A},a_{A})$ be a Hom-Lie algebroid and $\pi $ in $\Gamma (\Lambda ^{2}A)$. Then the following conditions are equivalent:
 \begin{enumerate}
 \item[(i)]  $\phi_{A}(\pi)=\pi$\ (the $\phi_{A}$-invariance);
 \item[(ii)] $\phi_{A}\circ \pi^{\sharp }=\pi ^{\sharp} \circ \phi_{A}^{\dagger}$.
 \end{enumerate}
 \end{prop}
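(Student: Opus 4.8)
The plan is to unwind both conditions into statements about sections $\xi,\eta\in\Gamma(A^{*})$ paired against $\pi$, and to use the defining relation $\langle\phi_{A}^{\dagger}(\xi),X\rangle=\varphi^{*}\langle\xi,\phi_{A}^{-1}(X)\rangle$ together with $(\phi_{A}^{\dagger})^{\dagger}=\phi_{A}$. First I would write $\pi$ locally (or just formally) as a sum of decomposables $X\wedge Y$ and record that $\pi^{\sharp}:\Gamma(A^{*})\longrightarrow\Gamma(A)$ is characterized by $\langle\eta,\pi^{\sharp}(\xi)\rangle=\pi(\xi,\eta)$ for all $\xi,\eta\in\Gamma(A^{*})$, with the appropriate sign convention so that $\pi^{\sharp}$ is determined by $\pi$ and conversely. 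Then the induced map $\phi_{A}$ on $\Gamma(\Lambda^{2}A)$ acts by $\phi_{A}(X\wedge Y)=\phi_{A}(X)\wedge\phi_{A}(Y)$, so for decomposables one computes
\begin{align*}
(\phi_{A}(\pi))(\xi,\eta)=\langle\xi,\phi_{A}(X)\rangle\langle\eta,\phi_{A}(Y)\rangle-\langle\xi,\phi_{A}(Y)\rangle\langle\eta,\phi_{A}(X)\rangle,
\end{align*}
and using $\langle\xi,\phi_{A}(X)\rangle=\varphi^{*}\langle(\phi_{A}^{\dagger})^{-1}(\xi),X\rangle$ (which follows from the definition of $\phi_{A}^{\dagger}$ applied to $(\phi_{A}^{\dagger})^{-1}(\xi)$ and the identity $(\phi_{A}^{\dagger})^{\dagger}=\phi_{A}$) this becomes $\varphi^{*}\bigl(\pi((\phi_{A}^{\dagger})^{-1}(\xi),(\phi_{A}^{\dagger})^{-1}(\eta))\bigr)$, extending to all of $\pi$ by linearity.

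Next I would translate condition (ii) into the same language: $\phi_{A}\circ\pi^{\sharp}=\pi^{\sharp}\circ\phi_{A}^{\dagger}$ means $\langle\eta,\phi_{A}(\pi^{\sharp}(\xi))\rangle=\langle\eta,\pi^{\sharp}(\phi_{A}^{\dagger}(\xi))\rangle$ for all $\xi,\eta$. The right-hand side is $\pi(\phi_{A}^{\dagger}(\xi),\eta)$ by definition of $\pi^{\sharp}$; the left-hand side I rewrite using $\langle\eta,\phi_{A}(Z)\rangle=\varphi^{*}\langle(\phi_{A}^{\dagger})^{-1}(\eta),Z\rangle$ with $Z=\pi^{\sharp}(\xi)$, giving $\varphi^{*}\langle(\phi_{A}^{\dagger})^{-1}(\eta),\pi^{\sharp}(\xi)\rangle=\varphi^{*}\bigl(\pi(\xi,(\phi_{A}^{\dagger})^{-1}(\eta))\bigr)$. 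So (ii) is equivalent to
\begin{align*}
\varphi^{*}\bigl(\pi(\xi,(\phi_{A}^{\dagger})^{-1}(\eta))\bigr)=\pi(\phi_{A}^{\dagger}(\xi),\eta)\qquad(\xi,\eta\in\Gamma(A^{*})),
\end{align*}
and after the substitution $\xi\mapsto(\phi_{A}^{\dagger})^{-1}(\xi)$, $\eta\mapsto\phi_{A}^{\dagger}(\eta)$ and using skew-symmetry this matches exactly the formula for $(\phi_{A}(\pi))(\xi,\eta)=\pi(\xi,\eta)$ obtained in the previous paragraph. Hence (i) $\Leftrightarrow$ (ii).

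The only real subtlety — and the step I expect to cost the most care — is bookkeeping the $\varphi^{*}$'s and the sign/ordering conventions in $\pi^{\sharp}$ so that the two reformulations line up without a stray $\varphi^{*}$ or a sign discrepancy; in particular one must check that the identity $\langle\xi,\phi_{A}(X)\rangle=\varphi^{*}\langle(\phi_{A}^{\dagger})^{-1}(\xi),X\rangle$ is applied consistently on both slots, which is legitimate because $\varphi$ is a diffeomorphism and $\phi_{A}$, $\phi_{A}^{\dagger}$ are invertible (the standing assumption that Hom-bundles are invertible). Everything else is a routine extension from decomposable $2$-sections to general $\pi$ by $C^{\infty}(M)$-linearity, which is harmless since both $\phi_{A}(\pi)=\pi$ and $\phi_{A}\circ\pi^{\sharp}=\pi^{\sharp}\circ\phi_{A}^{\dagger}$ are pointwise (indeed $C^{\infty}(M)$-multilinear) conditions.
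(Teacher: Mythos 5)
Your argument is correct and is essentially the paper's own proof in dual form: the paper expands $\pi=\sum_i X_i\wedge Y_i$ and uses the defining relation of $\phi_A^{\dagger}$ to establish the operator identity $\phi_A\circ\pi^{\sharp}=\phi_A(\pi)^{\sharp}\circ\phi_A^{\dagger}$, from which (i)$\Leftrightarrow$(ii) follows by invertibility of $\phi_A^{\dagger}$, and your pairing computation $(\phi_A(\pi))(\xi,\eta)=\varphi^{*}\bigl(\pi((\phi_A^{\dagger})^{-1}(\xi),(\phi_A^{\dagger})^{-1}(\eta))\bigr)$ is exactly that identity tested against cosections. (Only a cosmetic slip: the substitution needed to match the two reformulations is just $\xi\mapsto(\phi_A^{\dagger})^{-1}(\xi)$, and skew-symmetry is not required; since all such substitutions are invertible this does not affect the equivalence.)
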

 
 \begin{proof}
 We set $\pi :=\sum_{i}X_{i}\wedge Y_{i}$, where $X_{i}$ and $Y_{i}$ are in $\Gamma (A)$. We have $\phi_{A}(\pi)=\sum_{i}\phi_{A}(X_{i})\wedge \phi_{A}(Y_{i})$. Then we obtain $\phi_{A}\circ \pi^{\sharp}=\phi_{A}(\pi )^{\sharp}\circ \phi_{A}^{\dagger}$. In fact, for any $\alpha $ in $\Gamma (A^{*})$,
 \begin{align*}
%\phi_{A}(\pi )^{\sharp}\alpha &=\sum_{i}\langle \alpha ,\phi_{A}(X_{i})\rangle \phi_{A}(Y_{i})\\
\phi_{A}(\pi^{\sharp}\alpha )&=\phi_{A}\left(\sum_{i}(\langle \alpha ,X_{i}\rangle Y_{i}-\langle \alpha ,Y_{i}\rangle X_{i})\right)\\
                             &=\sum_{i}(\varphi^{*}\langle \alpha ,X_{i}\rangle \phi_{A}(Y_{i})-\varphi^{*}\langle \alpha ,Y_{i}\rangle \phi_{A}(X_{i}))\\
                             &=\sum_{i}(\varphi^{*}\langle \alpha ,\phi_{A}^{-1}(\phi_{A}(X_{i}))\rangle \phi_{A}(Y_{i})-\varphi^{*}\langle \alpha ,\phi_{A}^{-1}(\phi_{A}(Y_{i}))\rangle \phi_{A}(X_{i}))\\
                             &=\sum_{i}(\langle \phi_{A}^{\dagger}(\alpha ),\phi_{A}(X_{i})\rangle \phi_{A}(Y_{i})-\langle \phi_{A}^{\dagger}(\alpha ),\phi_{A}(Y_{i})\rangle \phi_{A}(X_{i}))\\
                             &=\phi_{A}(\pi )^{\sharp}(\phi_{A}^{\dagger}(\alpha )).
\end{align*} 
This equality means the equivalency between (i) and (ii).
 \end{proof}
 
 The following two theorems can be proved as in the case of $\varphi^{!}T_{\pi}^{*}M$ in \cite{CLS}.
 %次の二つの定理は参考文献\cite{CLS}の$\varphi^{!}T_{\pi}^{*}M$の場合と同様に示せる。
 
 \begin{theorem}
 Let $\pi $ be a Hom-Poisson structure on a Hom-Lie algebroid $(A, \varphi,\phi_{A},[\cdot,\cdot]_{A},a_{A})$. Then $A^{*}_{\pi}=(A^{*},\varphi, \phi_{A}^{\dagger}, [\cdot,\cdot]_{\pi} ,\pi ^{\sharp})$ is a Hom-Lie algebroid, where
 \begin{align}\label{pi bracket}
[\xi,\eta]_{\pi}:=\mathcal{L}_{\pi^{\sharp}\xi}^{A}\eta-\mathcal{L}_{\pi^{\sharp}\eta}^{A}\xi-d_{A}\langle \pi^{\sharp}\xi,\eta\rangle
\end{align}
for any $\xi $ and $\eta $ in $\Gamma (A^{*})$.
 \end{theorem}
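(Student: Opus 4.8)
The plan is to verify directly that the quadruple $A^{*}_{\pi}=(A^{*},\varphi,\phi_{A}^{\dagger},[\cdot,\cdot]_{\pi},\pi^{\sharp})$ satisfies the axioms of Definition \ref{Hom-Lie algebroid-def}, mirroring the argument for $\varphi^{!}T_{\pi}^{*}M$ in \cite{CLS}. The underlying Hom-bundle $(A^{*},\varphi,\phi_{A}^{\dagger})$ is already known to be an invertible Hom-bundle, and $\pi^{\sharp}:A^{*}\longrightarrow A$ composed with $a_{A}:A\longrightarrow \varphi^{!}TM$ gives the required anchor $\varphi^{!}TM$-valued bundle map (here we quietly use that $\pi$, being a section of $\Lambda^{2}A$, has $\pi^{\sharp}$ taking values in $A$, not merely in sections). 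So the work is: (1) show $[\cdot,\cdot]_{\pi}$ is skew-symmetric; (2) show $(\Gamma(A^{*}),\phi_{A}^{\dagger},[\cdot,\cdot]_{\pi})$ is a Hom-Lie algebra, i.e. that $\phi_{A}^{\dagger}$ is an algebra homomorphism for $[\cdot,\cdot]_{\pi}$ and that the Hom-Jacobi identity holds; (3) verify the Leibniz-type rule (i) of Definition \ref{Hom-Lie algebroid-def} with anchor $\pi^{\sharp}$; and (4) verify the anchor compatibility conditions (ii)(a) and (ii)(b), equivalently (ii)' of Remark \ref{def iikae}.

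Step (1) is immediate from the symmetry of $\langle\pi^{\sharp}\xi,\eta\rangle$ in $\xi,\eta$ (since $\pi$ is skew, $\langle\pi^{\sharp}\xi,\eta\rangle=-\langle\pi^{\sharp}\eta,\xi\rangle$, wait—actually $\langle\pi^{\sharp}\xi,\eta\rangle=\pi(\xi,\eta)$ is skew, so one must track signs carefully) together with the derivation property of $\mathcal{L}^{A}$; this is the kind of short computation \cite{CLS} would call ``easy''. Step (3) follows from expanding $[\xi,f\eta]_{\pi}$ using the definition of $[\cdot,\cdot]_{\pi}$, the Leibniz rule for $\mathcal{L}^{A}$ on $\Gamma(A^{*})$ and on functions via $d_{A}$, the identity $\mathcal{L}_{X}^{A}(f\eta)=\varphi^{*}f\,\mathcal{L}_{X}^{A}\eta+a_{A}(\phi_{A}(X))(f)\,\phi_{A}^{\dagger}(\eta)$, and $d_{A}(\varphi^{*}f\cdot g)$-type formulas, after which the anchor term $a_{A}(\phi_{A}^{\dagger}(\xi))(\cdot)$ appears as $a_{A}(\phi_{A}(\pi^{\sharp}\xi))(\cdot)$ once one invokes Proposition \ref{pi invariance and commutativity}, i.e. $\phi_{A}\circ\pi^{\sharp}=\pi^{\sharp}\circ\phi_{A}^{\dagger}$. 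For step (4), condition (ii)'(a) reads $\pi^{\sharp}\circ\phi_{A}^{\dagger}=\mathrm{Ad}_{\varphi^{*}}\circ\pi^{\sharp}$ as maps into $\Gamma(\varphi^{!}TM)$ after composing with $a_{A}$; this follows by combining Proposition \ref{pi invariance and commutativity} with the original anchor compatibility $a_{A}\circ\phi_{A}=\mathrm{Ad}_{\varphi^{*}}\circ a_{A}$. Condition (ii)'(b) is the statement $a_{A}(\pi^{\sharp}[\xi,\eta]_{\pi})=[a_{A}(\pi^{\sharp}\xi),a_{A}(\pi^{\sharp}\eta)]_{\varphi^{*}}$, which one reduces, using the definition of $[\cdot,\cdot]_{\pi}$ and $\mathcal{L}^{A}_{X}Y=[X,Y]_{A}$, to the identity $\pi^{\sharp}[\xi,\eta]_{\pi}=[\pi^{\sharp}\xi,\pi^{\sharp}\eta]_{A}$ plus anchor-compatibility of $A$; this ``morphism'' identity in turn is the integrated form of $[\pi,\pi]_{A}=0$ and is the technical heart of the proof.

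The main obstacle is the Hom-Jacobi identity in step (2) together with the companion identity $\pi^{\sharp}[\xi,\eta]_{\pi}=[\pi^{\sharp}\xi,\pi^{\sharp}\eta]_{A}$: these are precisely where the hypothesis $[\pi,\pi]_{A}=0$ must be used, and in the non-Hom setting they are the substantive content of the Koszul/Lichnerowicz bracket construction. The $\phi_{A}$-invariance $\phi_{A}(\pi)=\pi$ enters at every turn—through Proposition \ref{pi invariance and commutativity} to move $\phi_{A}^{\dagger}$ past $\pi^{\sharp}$, and to guarantee that the bracket $[\cdot,\cdot]_{\pi}$ is genuinely compatible with $\phi_{A}^{\dagger}$ rather than merely well-defined—so I would first record the consequences $\phi_{A}\pi^{\sharp}=\pi^{\sharp}\phi_{A}^{\dagger}$ and $\phi_{A}^{\dagger}(d_{A}f)=d_{A}(\varphi^{*}f)$, and the intertwining of $\mathcal{L}^{A}$ with $\phi_{A},\phi_{A}^{\dagger}$, then carry out the Jacobi computation by expanding each double bracket via \eqref{pi bracket}, collecting the nine Lie-derivative terms and the $d_{A}$-terms, and applying the Hom-Jacobi identity of $A$ together with $[\pi,\pi]_{A}=0$ to see the cyclic sum vanish. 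Since the excerpt explicitly says this theorem ``can be proved as in the case of $\varphi^{!}T_{\pi}^{*}M$ in \cite{CLS}'', I expect the write-up to proceed by transcribing that argument with $TM$ replaced by $A$, $\mathfrak{X}(M)$ by $\Gamma(A)$, and $d$ by $d_{A}$, inserting $\phi_{A}$/$\phi_{A}^{\dagger}$ in the Hom-positions dictated by the Hom-Cartan formula \eqref{A-Lie derivative} and the Hom-Schouten identities, and checking at each step that the $\phi_{A}$-invariance of $\pi$ supplies exactly the intertwining needed.
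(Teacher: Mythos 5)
Your plan coincides with the paper's: the paper offers no proof of this theorem beyond the remark that it ``can be proved as in the case of $\varphi^{!}T_{\pi}^{*}M$ in \cite{CLS}'', and your outline --- direct verification of the Hom-Lie algebroid axioms, with $[\pi,\pi]_{A}=0$ supplying the Hom-Jacobi identity and the morphism property $\pi^{\sharp}[\xi,\eta]_{\pi}=[\pi^{\sharp}\xi,\pi^{\sharp}\eta]_{A}$, and the $\phi_{A}$-invariance entering through $\phi_{A}\circ\pi^{\sharp}=\pi^{\sharp}\circ\phi_{A}^{\dagger}$ --- is exactly that transcription. Your reading of the anchor as $a_{A}\circ\pi^{\sharp}$ (the ``$\pi^{\sharp}$'' in the statement is an abuse of notation, harmless only in the special case $A=\varphi^{!}TM$ where $a_{A}=\mathrm{id}$) is also the correct one.
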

 
 \begin{theorem}
 Let $\pi $ be a Hom-Poisson structure on a Hom-Lie algebroid $(A, \varphi,\phi_{A},[\cdot,\cdot]_{A},a_{A})$. If the set $\{d_{\pi}f\ |\ f\in C^{\infty }(M)\}$ generates $\Gamma (A^{*})$ as a $C^{\infty }(M)$-module, then a pair $(A,A^{*}_{\pi})$ is a Hom-Lie bialgebroid, where $d_{\pi}$ is the differential of the Hom-Lie algebroid $A_{\pi}^{*}$.
 \end{theorem}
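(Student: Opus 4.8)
The statement to prove is that if $\pi$ is a Hom-Poisson structure on a Hom-Lie algebroid $A$ and the differentials $\{d_\pi f \mid f \in C^\infty(M)\}$ generate $\Gamma(A^*)$ as a $C^\infty(M)$-module, then $(A, A^*_\pi)$ is a Hom-Lie bialgebroid. By the previous theorem we already know $A^*_\pi = (A^*, \varphi, \phi_A^\dagger, [\cdot,\cdot]_\pi, \pi^\sharp)$ is a Hom-Lie algebroid in duality with $A$, so what remains is precisely the compatibility condition
\begin{align*}
d_\pi[X,Y]_A = [d_\pi X, \phi_A(Y)]_A + [\phi_A(X), d_\pi Y]_A
\end{align*}
for all $X, Y \in \Gamma(A)$, where $d_\pi$ is the differential of $A^*_\pi$ and $[\cdot,\cdot]_A$ on the right is the Hom-Schouten bracket on $\Gamma(\Lambda^\bullet A)$.

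The plan is to mimic the classical Lie-bialgebroid argument (Mackenzie–Xu, and the Poisson-case specialization) adapted to the Hom-setting. First I would show that the set of pairs $(X,Y)$ for which the compatibility identity holds is closed under multiplication by functions in each slot; this is where the $C^\infty(M)$-module generation hypothesis enters, reducing the verification to the case $X = d_\pi f$, $Y = d_\pi g$ (more precisely $X = \pi^\sharp(d_A f)$, $Y = \pi^\sharp(d_A g)$ after unwinding $d_\pi$ on functions, since $d_\pi f$ lives in $\Gamma(A^*)$ while we need sections of $A$ — one must be careful here and work with the images under $\pi^\sharp$, or equivalently reduce to generators of $\Gamma(A)$ of Hamiltonian type). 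The reduction step requires the Hom-analogues of the Leibniz rules for $d_\pi$ and for the Hom-Schouten bracket $[\cdot,\cdot]_A$, all of which are recorded in the Preliminaries (the derivation property of $d_A$, the graded-Leibniz rule $[D_1, D_2 \wedge D_3]_A = [D_1,D_2]_A \wedge \phi_A(D_3) + \dots$, and the interaction of $\phi_A$ with $d_A$ and the bracket). The key subtlety throughout is that twisting factors of $\phi_A$ and $\varphi^*$ appear everywhere, so the bookkeeping of which terms carry a $\phi_A$ and which a $\phi_A^{-1}$ must be done carefully — this is exactly the role the $\phi_A$-invariance $\phi_A(\pi) = \pi$ plays, via Proposition~\ref{pi invariance and commutativity} ($\phi_A \circ \pi^\sharp = \pi^\sharp \circ \phi_A^\dagger$), which lets these factors be pushed past $\pi^\sharp$.

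For the base case on Hamiltonian generators, I would compute both sides directly: $d_\pi$ on a function $f$ is essentially $\pi^\sharp(d_A f)$ up to the identification, and $[\pi^\sharp(d_Af), \pi^\sharp(d_Ag)]_A$ should be expressible, using $[\pi,\pi]_A = 0$ and the definition of the bracket $[\cdot,\cdot]_\pi$, as $\pi^\sharp$ applied to something built from $d_A$ and $[\cdot,\cdot]_\pi$; the compatibility identity then follows from $d_A^2 = 0$ together with $d_A \circ \phi_A^\dagger = \phi_A^\dagger \circ d_A$. An alternative and perhaps cleaner route is to observe that $[\pi, \cdot]_A$ is (up to a sign and $\phi_A$-twist) the differential $d_\pi$ on $\Gamma(\Lambda^\bullet A)$ — i.e., $d_\pi D = -[\pi, D]_A$ or a $\phi_A$-twisted version thereof — in which case the compatibility condition becomes the graded Jacobi identity $[\pi, [\pi, \cdot]_A]_A = \tfrac12 [[\pi,\pi]_A, \cdot]_A = 0$ for the Hom-Schouten bracket, which should follow from the Hom-Jacobi identity for $[\cdot,\cdot]_A$ extended to the Schouten bracket. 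I expect the main obstacle to be precisely establishing this last claim — that $[\pi,\cdot]_A$ represents $d_\pi$ with the correct $\phi_A$-twist — and verifying the graded Hom-Jacobi identity for the Hom-Schouten bracket in the required degrees, since the Hom-Jacobi identity carries $\phi_A$'s on the outer arguments and it is not immediate that these twists align with the $\phi_A$-invariance of $\pi$; this is where the bulk of the careful computation will live, and where one genuinely uses all three hypotheses ($[\pi,\pi]_A = 0$, $\phi_A(\pi) = \pi$, and the generation condition).
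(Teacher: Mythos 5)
The paper offers no written proof of this theorem: it states only that it ``can be proved as in the case of $\varphi^{!}T^{*}_{\pi}M$'' in \cite{CLS}, and that argument is precisely your primary route --- reduce the compatibility identity $d_{\pi}[X,Y]_{A}=[d_{\pi}X,\phi_{A}(Y)]_{A}+[\phi_{A}(X),d_{\pi}Y]_{A}$ to generators of Hamiltonian type via the Leibniz rules and the module-generation hypothesis, then verify it there using $[\pi,\pi]_{A}=0$ and the $\phi_{A}$-invariance (through Proposition \ref{pi invariance and commutativity}). So your main plan is the intended one, although it remains an outline: none of the computations where the $\phi_{A}$/$\varphi^{*}$ bookkeeping actually bites are carried out.

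Two concrete corrections. First, your ``cleaner alternative'' misidentifies the target identity. The fact that $[\pi,\cdot]_{A}$ represents $d_{\pi}$ is not the obstacle you anticipate --- it is exactly Proposition \ref{d pi property}, valid for any $\phi_{A}$-invariant $2$-section. Granting it, the bialgebroid condition becomes the graded Hom-Jacobi identity with $\pi$ in the outer slot, i.e.\ the statement that $[\pi,\cdot]_{A}$ is a derivation of the Hom-Schouten bracket, $[\phi_{A}(\pi),[X,Y]_{A}]_{A}=[[\pi,X]_{A},\phi_{A}(Y)]_{A}+[\phi_{A}(X),[\pi,Y]_{A}]_{A}$, with the outer twist removed by $\phi_{A}(\pi)=\pi$. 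It is \emph{not} the identity $[\pi,[\pi,\cdot]_{A}]_{A}=\tfrac12[[\pi,\pi]_{A},\cdot]_{A}=0$ that you wrote; that identity is what yields $d_{\pi}^{2}=0$ and belongs to the proof of the preceding theorem (that $A^{*}_{\pi}$ is a Hom-Lie algebroid), not to the bialgebroid compatibility. Second, watch the types: $d_{\pi}$ is the differential of $A^{*}_{\pi}$, so $d_{\pi}f$ is a section of $(A^{*})^{*}=A$ (indeed $d_{\pi}f=\pm\pi^{\sharp}(d_{A}f)$), not of $\Gamma(A^{*})$ as you say in passing; your eventual choice of generators $\pi^{\sharp}(d_{A}f)$ is the right one, and this is also the reading one must give to the generation hypothesis as printed. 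With these repairs the proposal is the standard argument the paper is pointing at.
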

 
We also denote the operator defined by the formula (\ref{A-gaibibun}) for any $2$-section $\pi $ in $\Gamma (\Lambda^{2}A)$ by the same symbol $d_{\pi}$. The following two propositions will be used in Subsection \ref{sub Hom-Poisson-Nijenhuis structures} and Section \ref{Hom-Dirac structures on Hom-Courant algebroids}. These are generalizations of properties for Poisson structures \cite{KMa}.

% $A^{*}_{\pi}$上定まる外微分を$d_{\pi }$とし、同じ公式によって任意の$2$-section $\pi $に対しても同じ公式(\ref{A-gaibibun})で定まる作用素を同じ記号で書く。
% 
% 次の２つの命題はHom-P-Nの節、及びHom-Diracの節で使う。
 
 \begin{prop}\label{d pi property}
 Let $(A, \varphi,\phi_{A},[\cdot,\cdot]_{A},a_{A})$ be a Hom-Lie algebroid and $\pi $ in $\Gamma (\Lambda ^{2}A)$ a $\phi_{A}$-invariant $2$-section. Then for any $D$ in $\Gamma(\Lambda ^{*}A)$,
 \begin{align}\label{pi bracket property}
 d_{\pi}D=[\pi ,D]_{A}.
\end{align}
 \end{prop}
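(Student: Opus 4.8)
The plan is to prove the identity $d_{\pi}D=[\pi,D]_{A}$ by induction on the degree of $D$, exploiting that both sides are derivations (or more precisely satisfy compatible Leibniz-type rules) once the base cases are established. First I would check the statement on functions and on sections of $A$, i.e. for $D=f\in C^{\infty}(M)$ and $D=X\in\Gamma(A)$. For $f$, unwinding the formula (\ref{A-gaibibun}) applied to $\pi$ gives $(d_{\pi}f)(X)=\pi^{\sharp}\text{-type expression}$, which should match $[\pi,f]_{A}$ computed via the Hom-Schouten bracket rule $[X,f]_{A}=a_{A}(\phi_{A}(X))(f)$ extended to $\pi=\sum_{i}X_{i}\wedge Y_{i}$; here the $\phi_{A}$-invariance of $\pi$ is what makes the $\phi_{A}$'s and $\phi_{A}^{-1}$'s appearing in (\ref{A-gaibibun}) cancel correctly against those in the definition of the Hom-Schouten bracket. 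For $D=X\in\Gamma(A)$ one does the analogous but slightly longer computation comparing $d_{\pi}X$ (a $1$-section of... wait, $X$ is a section of $A$, so $d_\pi X$ must be interpreted via the Schouten-type extension, giving an element of $\Gamma(\Lambda^{2}A)$) with $[\pi,X]_{A}\in\Gamma(\Lambda^{2}A)$.

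Actually I want to be careful about what $d_{\pi}$ means on $\Gamma(\Lambda^{*}A)$: the paper says $d_{\pi}$ is "the differential of the Hom-Lie algebroid $A_{\pi}^{*}$" in the previous theorem, so $d_{\pi}=d_{A_{\pi}^{*}}$, and the statement $d_{\pi}D=[\pi,D]_{A}$ should be read as: the Hom-Lie algebroid differential of $A^{*}_{\pi}$, acting on multivectors of $A$ (which are multi-cosections of $A^{*}_\pi$... no). Rather, the cleanest route: recall from the general Hom-Lie algebroid formalism that for any Hom-Lie algebroid, $d$ on the "function/multivector" side is given by bracketing, so $d_{A^{*}_{\pi}}$ acting on $\Gamma(\Lambda^{*}A)$ equals $[\,\cdot\,,\,\cdot\,]$ with respect to the bracket on $A^{*}_\pi$... this is getting circular. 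The intended meaning is almost surely the operational one: $d_\pi$ denotes the operator defined by plugging $\pi$ into formula (\ref{A-gaibibun}), extended as a derivation to $\Gamma(\Lambda^{*}A)$ the same way $d_A$ is, and the claim is that this operator coincides with $[\pi,\,\cdot\,]_A$ using the Hom-Schouten bracket on $A$.

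So the proof plan is: establish the identity for $f\in C^\infty(M)$ and for $X\in\Gamma(A)$ by direct computation (this is where $\phi_A$-invariance of $\pi$, equivalently Proposition \ref{pi invariance and commutativity}(ii) $\phi_A\circ\pi^\sharp=\pi^\sharp\circ\phi_A^\dagger$, is used to make the formulas line up), then extend to general $D$ by induction on degree using the two Leibniz rules: on the one side, the graded derivation property of $d_\pi$ with respect to $\wedge$ (the formula $d_A(\omega\wedge\eta)=d_A\omega\wedge\phi_A^\dagger(\eta)+(-1)^k\phi_A^\dagger(\omega)\wedge d_A\eta$, adapted to the $\pi$/multivector setting), and on the other side the Hom-Schouten bracket rule $[D_1,D_2\wedge D_3]_A=[D_1,D_2]_A\wedge\phi_A(D_3)+(-1)^{(k+1)l}\phi_A(D_2)\wedge[D_1,D_3]_A$. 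Writing $D=X\wedge D'$ with $\deg D'<\deg D$ and applying both rules, the inductive step reduces to the base cases together with compatibility of the $\phi_A$-twists; the $\phi_A$-invariance $\phi_A(\pi)=\pi$ is again what guarantees these twists match.

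The main obstacle I expect is bookkeeping of the $\phi_A$ and $\phi_A^{-1}$ (and $\phi_A^\dagger$) twists: in the "Hom-" world the Leibniz rule for $d$ inserts $\phi_A^\dagger$ on the untouched factor while the Schouten rule inserts $\phi_A$, and the base-case formulas (\ref{A-gaibibun}) themselves carry $\phi_A^{-1}$'s on the arguments. Making all of these cancel requires repeated use of $\phi_A(\pi)=\pi$ and of the equivariance identities $a_A\circ\phi_A=\mathrm{Ad}_{\varphi^*}\circ a_A$ and $d_A\circ\phi_A^\dagger=\phi_A^\dagger\circ d_A$; getting the signs and the order of operations exactly right in the inductive step is the delicate part, whereas the conceptual structure (two compatible derivations agreeing on generators) is routine. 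An alternative, possibly cleaner, approach would be to verify the identity by evaluating both sides of (\ref{pi bracket property}) on arbitrary $\xi_0,\dots,\xi_k\in\Gamma(A^*)$ and comparing term by term with the known expansion of the Hom-Schouten bracket, but this tends to produce longer expressions; I would fall back on it only if the inductive argument hides some subtlety in low degree.
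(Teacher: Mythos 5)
Your plan is essentially the paper's proof: the paper likewise reduces the claim to generators (it states that it suffices to check $d_{\pi}Z=[\pi,Z]_{A}$ for $Z\in\Gamma(A)$, implicitly invoking the same Leibniz-rule extension you describe) and then verifies the base case by the direct computation you list as your fallback, namely evaluating both sides on arbitrary $\alpha,\beta\in\Gamma(A^{*})$ and matching the resulting expressions. Your reading of $d_{\pi}$ as the operator obtained by plugging $\pi$ into formula (\ref{A-gaibibun}) is exactly the convention the paper fixes just before the proposition, so the proposal is correct and follows the same route.
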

 
 \begin{proof}
 It is sufficient to prove that $d_{\pi}Z=[\pi,Z]_{A}$ holds for any $Z$ in $\Gamma (A)$. We set $\pi =\sum_{i}X_{i}\wedge Y_{i}\ (X_{i}, Y_{i}\in \Gamma (A))$.  By long calculations, we have that both $(d_{\pi}Z)(\alpha ,\beta)$ and $[\pi,Z]_{A}(\alpha ,\beta)$ for any $\alpha $ and $\beta$ in $\Gamma (A^{*})$ are equal to
 \begin{align*}
\sum_{i}(&\langle X_{i},\alpha \rangle a_{A}(Y_{i})\langle Z, (\phi_{A}^{\dagger })^{-1}(\beta)\rangle -\langle Y_{i},\alpha \rangle a_{A}(X_{i})\langle Z, (\phi_{A}^{\dagger })^{-1}(\beta)\rangle \\
         &-\langle X_{i},\beta \rangle a_{A}(Y_{i})\langle Z, (\phi_{A}^{\dagger })^{-1}(\alpha )\rangle +\langle Y_{i},\beta \rangle a_{A}(X_{i})\langle Z, (\phi_{A}^{\dagger })^{-1}(\alpha )\rangle \\
         &-\langle \phi_{A}(X_{i}),\alpha \rangle \langle \phi_{A}(Z),\mathcal{L}_{Y_{i}}^{A}((\phi_{A}^{\dagger })^{-1}(\beta))\rangle \\
         &+\langle \phi_{A}(Y_{i}),\alpha \rangle \langle \phi_{A}(Z),\mathcal{L}_{X_{i}}^{A}((\phi_{A}^{\dagger })^{-1}(\beta))\rangle \\
         &+\langle \phi_{A}(X_{i}),\beta \rangle \langle \phi_{A}(Z),\mathcal{L}_{Y_{i}}^{A}((\phi_{A}^{\dagger })^{-1}(\alpha))\rangle \\
         &-\langle \phi_{A}(Y_{i}),\beta \rangle \langle \phi_{A}(Z),\mathcal{L}_{X_{i}}^{A}((\phi_{A}^{\dagger })^{-1}(\alpha))\rangle .
\end{align*} 
Therefore (\ref{pi bracket property}) holds.
 \end{proof}
 
 \begin{prop}\label{pi pi calculus}
 Let $(A, \varphi,\phi_{A},[\cdot,\cdot]_{A},a_{A})$ be a Hom-Lie algebroid and $\pi $ in $\Gamma (\Lambda ^{2}A)$ a $\phi_{A}$-invariant $2$-section. Then for any $\alpha $ and $\beta $ in $\Gamma(A^{*})$,
 \begin{align}\label{pi pi culculus formula}
 \frac{1}{2}[\pi,\pi]_{A}(\phi_{A}^{\dagger}(\alpha ),\phi_{A}^{\dagger}(\beta),\cdot)=[\pi ^{\sharp}\alpha ,\pi^{\sharp}\beta ]_{A}-\pi^{\sharp}[\alpha ,\beta ]_{\pi}.
\end{align}
 \end{prop}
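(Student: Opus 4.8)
The plan is to reduce (\ref{pi pi culculus formula}) to an equality of two explicit sums and to verify it termwise, in the style of the proof of Proposition \ref{d pi property}. The organizing observation is that, since $\pi$ is $\phi_{A}$-invariant, Proposition \ref{d pi property} applied to $D=\pi$ gives $[\pi,\pi]_{A}=d_{\pi}\pi$, where $d_{\pi}$ is the operator produced by formula (\ref{A-gaibibun}) for the datum $(A^{*},\phi_{A}^{\dagger},[\cdot,\cdot]_{\pi},a_{A}\circ\pi^{\sharp})$ associated with $\pi$. Pairing both sides of (\ref{pi pi culculus formula}) with an arbitrary $\gamma\in\Gamma(A^{*})$, it therefore suffices to prove
\begin{align*}
\tfrac{1}{2}(d_{\pi}\pi)(\phi_{A}^{\dagger}(\alpha),\phi_{A}^{\dagger}(\beta),\gamma)=\bigl\langle [\pi^{\sharp}\alpha,\pi^{\sharp}\beta]_{A}-\pi^{\sharp}[\alpha,\beta]_{\pi},\gamma\bigr\rangle
\end{align*}
for all $\alpha,\beta\in\Gamma(A^{*})$.

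For the left-hand side I would write out (\ref{A-gaibibun}) for the datum above at the arguments $\phi_{A}^{\dagger}(\alpha),\phi_{A}^{\dagger}(\beta),\gamma$: the inner factors $(\phi_{A}^{\dagger})^{-1}$ absorb the outer $\phi_{A}^{\dagger}$'s on $\alpha$ and $\beta$, the factor $\phi_{A}(\pi)=\pi$ removes the twist on $\pi$ in the three bracket terms, and Proposition \ref{pi invariance and commutativity} lets me replace $\pi^{\sharp}\circ\phi_{A}^{\dagger}$ by $\phi_{A}\circ\pi^{\sharp}$ in the anchor terms; combined with condition (ii)(a) of Definition \ref{Hom-Lie algebroid-def} and the identity $\phi_{A}(\pi)(\xi,\eta)=\varphi^{*}\bigl(\pi((\phi_{A}^{\dagger})^{-1}\xi,(\phi_{A}^{\dagger})^{-1}\eta)\bigr)$, this untwists the remaining terms. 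For the right-hand side I would expand $[\alpha,\beta]_{\pi}$ by its definition (\ref{pi bracket}), use the identity $\langle\mathcal{L}^{A}_{X}\alpha,Y\rangle=a_{A}(\phi_{A}(X))\langle\alpha,\phi_{A}^{-1}(Y)\rangle-\langle\phi_{A}^{\dagger}(\alpha),\mathcal{L}^{A}_{X}(\phi_{A}^{-1}(Y))\rangle$ to pair $\pi^{\sharp}[\alpha,\beta]_{\pi}$ with $\gamma$, and expand $\langle[\pi^{\sharp}\alpha,\pi^{\sharp}\beta]_{A},\gamma\rangle$ through the Leibniz rule (i) of Definition \ref{Hom-Lie algebroid-def}, the $d_{A}\langle\pi^{\sharp}\alpha,\beta\rangle$-contribution being handled by the defining formula of $d_{A}$. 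In practice the cleanest way to keep all the $\phi_{A}$-, $\phi_{A}^{\dagger}$- and $\phi_{A}^{-1}$-insertions under control is to substitute $\pi=\sum_{i}X_{i}\wedge Y_{i}$ with $X_{i},Y_{i}\in\Gamma(A)$ on both sides at the outset and expand everything using the Leibniz rules for $[\cdot,\cdot]_{A}$ on $\Gamma(\Lambda^{*}A)$ and on $\Gamma(A)$, exactly as was done for Proposition \ref{d pi property}; matching the $\mathcal{L}^{A}$-terms with the $\pi([\cdot,\cdot]_{\pi},\cdot)$-terms and the derivative-of-pairing terms with the anchor part then yields the claim.

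I expect the argument to be routine but lengthy: the only genuine difficulty is the bookkeeping of the Hom-twists and of the signs across the roughly a dozen terms on each side. No phenomenon appears beyond the classical ($\varphi=\mathrm{id}$, $\phi_{A}=\mathrm{id}$) identity of \cite{KMa}; evaluating the left-hand side at $\phi_{A}^{\dagger}(\alpha),\phi_{A}^{\dagger}(\beta)$ rather than at $\alpha,\beta$ is precisely the normalization that makes the twists cancel, so once that is in place the comparison proceeds as in the non-Hom case.
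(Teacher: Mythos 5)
Your proposal is correct and follows essentially the same route as the paper: both reduce to $[\pi,\pi]_{A}=d_{\pi}\pi$ via Proposition \ref{d pi property}, expand $d_{\pi}\pi$ through formula (\ref{A-gaibibun}) using the $\phi_{A}$-invariance of $\pi$ and the definition (\ref{pi bracket}) of $[\cdot,\cdot]_{\pi}$, and observe that evaluating at $\phi_{A}^{\dagger}(\alpha),\phi_{A}^{\dagger}(\beta)$ is exactly the normalization that cancels the twists. The only cosmetic difference is that the paper computes $[\pi,\pi](\alpha,\beta,\gamma)$ first and substitutes $\alpha\mapsto\phi_{A}^{\dagger}(\alpha)$, $\beta\mapsto\phi_{A}^{\dagger}(\beta)$ at the end, whereas you insert the twists from the outset.
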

 
 \begin{proof}
By Proposition \ref{d pi property}, we have $[\pi,\pi]_{A}=d_{\pi}\pi$ since $\pi$ is $\phi_{A}$-invariant. By calculations using the formula (\ref{A-gaibibun}), the $\phi_{A}$-invariance of $\pi$ and the definition (\ref{pi bracket}) of $[\cdot,\cdot]_{\pi}$, we get, for any $\alpha ,\beta $ and $\gamma $ in $\Gamma (A^{*})$,
 \begin{align*}
[\pi,\pi](\alpha ,\beta ,\gamma )=2\langle \gamma , [\pi^{\sharp}((\phi_{A}^{\dagger})^{-1}(\alpha )),& \pi^{\sharp}((\phi_{A}^{\dagger})^{-1}(\beta))]_{A}\\
&-\pi^{\sharp}[(\phi_{A}^{\dagger})^{-1}(\alpha ),(\phi_{A}^{\dagger})^{-1}(\beta )]_{\pi}\rangle .
\end{align*}
By replacing $\alpha ,\beta$ with $\phi_{A}^{\dagger}(\beta ), \phi_{A}^{\dagger}(\beta )$ respectively, we obtain the formula (\ref{pi pi culculus formula}).
%%%%%%%%%%%%%%%%%%%%%%%%%%%%%%%%%%%%%%%%%%%%%%%%%%%%%%%%%%%%%%%%%%%%%%%%%%%%%%%%%%%%%%%%%%%%%%%%%%%%%%%%%
\iffalse

For any $\alpha ,\beta $ and $\gamma $ in $\Gamma (A^{*})$, we calculate
 \begin{align*}
[\pi,\pi](\alpha ,\beta ,\gamma )&=(d_{\pi}\pi)(\alpha ,\beta ,\gamma )\\
                                 &=a_{A}(\pi^{\sharp}\alpha )(\pi ((\phi_{A}^{\dagger })^{-1}(\beta),(\phi_{A}^{\dagger })^{-1}(\gamma)))\\
                                 &\quad -a_{A}(\pi^{\sharp}\beta )(\pi ((\phi_{A}^{\dagger })^{-1}(\alpha),(\phi_{A}^{\dagger })^{-1}(\gamma)))\\
                                 &\quad +a_{A}(\pi^{\sharp}\gamma )(\pi ((\phi_{A}^{\dagger })^{-1}(\alpha),(\phi_{A}^{\dagger })^{-1}(\beta)))\\
                                 &\quad -(\phi_{A}(\pi ))([(\phi_{A}^{\dagger })^{-1}(\alpha ),(\phi_{A}^{\dagger })^{-1}(\beta )]_{\pi},\gamma )\\
                                 &\quad +(\phi_{A}(\pi ))([(\phi_{A}^{\dagger })^{-1}(\alpha ),(\phi_{A}^{\dagger })^{-1}(\gamma )]_{\pi},\beta )\\
                                 &\quad -(\phi_{A}(\pi ))([(\phi_{A}^{\dagger })^{-1}(\beta ),(\phi_{A}^{\dagger })^{-1}(\beta )]_{\pi},\gamma )\\
                                 &=\cdots (\mbox{端折りながら計算を書いていく})
\end{align*} 
最後に$\alpha,\beta$を$\phi_{A}(\alpha),\phi_{A}(\beta)$に置き換えれば、(\ref{pi pi culculus formula})が成り立つ。

\fi
%%%%%%%%%%%%%%%%%%%%%%%%%%%%%%%%%%%%%%%%%%%%%%%%%%%%%%%%%%%%%%%%%%%%%%%%%%%%%%%%%%%%%%%%%%%%%%%%%%%%%%%%%
 \end{proof}
 
 \begin{rem}
 Considering ``Hom-versions'' of several generalizations of Poisson structures, for example, quasi-Poisson structures \cite{SW}, twisted Poisson structures \cite{AK}, \cite{AKMe}, and so on, is very interesting.
 \end{rem}
 
 \subsection{Hom-Nijenhuis structures}\label{subsection:Hom-Nijenhuis structures}
 
 In this subsection, we introduce Hom-Nijenhuis structures on a Hom-Lie algebroid and investigate properties of these. The condition ``$\phi_{A}$-invariance'' also plays an important role for Hom-Nijenhuis structures.
 
 \begin{defn}
 Let $(A, \varphi,\phi_{A},[\cdot,\cdot]_{A},a_{A})$ be a Hom-Lie algebroid over $M$ and $N:A\longrightarrow A$ a bundle map over $M$. Then we define the {\it Nijenhuis torsion} $\mathcal{T}_{N}$ of $N$ by
 \begin{align}
\mathcal{T}_{N}(X,Y):=[NX,NY]_{A}-N[NX,Y]_{A}-N[X,NY]_{A}+N^{2}[X,Y]_{A}
\end{align}
for any $X$ and $Y$ in $\Gamma (A)$. $N$ is a {\it Hom-Nijenhuis structure} on $A$ if 
 \begin{align}
&\mathcal{T}_{N}=0,\\
&\phi_{A}(N)=N. \label{phi-invariant N}
\end{align}
The condition (\ref{phi-invariant N}) is called the {\it $\phi_{A}$-invariance}.
 \end{defn}
 
The following lemma is fundamental.

 \begin{lemma}\label{phi-N-lemma}
 Let $(A, \varphi,\phi_{A},[\cdot,\cdot]_{A},a_{A})$ be a Hom-Lie algebroid over $M$ and $N,N':A\longrightarrow A$ two bundle maps over $M$. Then the followings hold:
 \begin{enumerate}
 \item[(i)] $\phi_{A}(NX)=\phi_{A}(N)\phi_{A}(X)$ for any $X$ in $\Gamma (A)$;
 \item[(ii)] $\phi_{A}^{\dagger}(N^{*}\xi)=\phi_{A}(N^{*})\phi_{A}^{\dagger}(\xi)$ for any $\xi$ in $\Gamma (A^{*})$;
 \item[(iii)] $\phi_{A}(N^{*})=\phi_{A}(N)^{*}$;
 \item[(iv)] $\phi_{A}(N\circ N')=\phi_{A}(N)\circ \phi_{A}(N')$;
  \item[(v)] $N\circ \phi_{A}=\phi_{A}\circ N \Longleftrightarrow \phi_{A}(N)=N$\ (the $\phi_{A}$-invariance).
 \end{enumerate}
 \end{lemma}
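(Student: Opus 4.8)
The plan is to view a bundle map $N\colon A\longrightarrow A$ over $M$ as a section of $\mathrm{End}(A)\cong A\otimes A^{*}$, so that, at least locally, $N=\sum_{i}X_{i}\otimes\xi_{i}$ with $X_{i}\in\Gamma(A)$ and $\xi_{i}\in\Gamma(A^{*})$, acting by $NX=\sum_{i}\langle\xi_{i},X\rangle X_{i}$; then $\phi_{A}(N)$ is precisely the section $\sum_{i}\phi_{A}(X_{i})\otimes\phi_{A}^{\dagger}(\xi_{i})$ given by the induced map on $\Gamma(A\otimes(A^{*}))$ introduced above (in particular it is independent of the chosen decomposition). The whole lemma is then obtained from statement (i), which is the one genuine computation, together with the invertibility of $\phi_{A}$.

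First I would prove (i). With $N=\sum_{i}X_{i}\otimes\xi_{i}$, the $\varphi^{*}$-function linearity of $\phi_{A}$ gives $\phi_{A}(NX)=\sum_{i}\varphi^{*}\langle\xi_{i},X\rangle\,\phi_{A}(X_{i})$, while the definition of $\phi_{A}^{\dagger}$ gives $\phi_{A}(N)\phi_{A}(X)=\sum_{i}\langle\phi_{A}^{\dagger}(\xi_{i}),\phi_{A}(X)\rangle\,\phi_{A}(X_{i})=\sum_{i}\varphi^{*}\langle\xi_{i},\phi_{A}^{-1}(\phi_{A}(X))\rangle\,\phi_{A}(X_{i})=\sum_{i}\varphi^{*}\langle\xi_{i},X\rangle\,\phi_{A}(X_{i})$; the two agree. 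For (iii), note that $N^{*}=\sum_{i}\xi_{i}\otimes X_{i}$ as a section of $A^{*}\otimes A$, hence $\phi_{A}(N^{*})=\sum_{i}\phi_{A}^{\dagger}(\xi_{i})\otimes\phi_{A}(X_{i})$, which is exactly the transpose of $\phi_{A}(N)=\sum_{i}\phi_{A}(X_{i})\otimes\phi_{A}^{\dagger}(\xi_{i})$, i.e.\ $\phi_{A}(N)^{*}$. Statement (ii) is the computation of (i) carried out for the endomorphism $N^{*}$ on the invertible Hom-bundle $(A^{*},\varphi,\phi_{A}^{\dagger})$; concretely, $\phi_{A}^{\dagger}(N^{*}\xi)=\sum_{i}\varphi^{*}\langle\xi,X_{i}\rangle\,\phi_{A}^{\dagger}(\xi_{i})$ and $\phi_{A}(N^{*})\phi_{A}^{\dagger}(\xi)=\sum_{i}\langle\phi_{A}^{\dagger}(\xi),\phi_{A}(X_{i})\rangle\,\phi_{A}^{\dagger}(\xi_{i})$ coincide by the same manipulation as in (i).

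Then (iv) follows by applying (i) repeatedly: for every $X\in\Gamma(A)$, both $\phi_{A}(N\circ N')$ and $\phi_{A}(N)\circ\phi_{A}(N')$ send $\phi_{A}(X)$ to $\phi_{A}\bigl(N(N'X)\bigr)$, and since $\phi_{A}$ is a bijection of $\Gamma(A)$ the two endomorphisms are equal. For (v), the identity $N\circ\phi_{A}=\phi_{A}\circ N$ means $N(\phi_{A}(X))=\phi_{A}(NX)$ for all $X$, which by (i) is $N(\phi_{A}(X))=\phi_{A}(N)(\phi_{A}(X))$; as every section of $A$ has the form $\phi_{A}(X)$, this holds if and only if $N=\phi_{A}(N)$.

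There is no real difficulty here; the only thing that needs care is the bookkeeping between $N$ as an endomorphism and as a tensor, and the correct interplay of $\phi_{A}^{\dagger}$ with transposition — in particular verifying that the induced map $\phi_{A}$ on $\Gamma(A\otimes(A^{*}))$ restricts, on $\mathrm{End}(A)$ (and on $\mathrm{End}(A^{*})$), to the operation $N\mapsto\phi_{A}(N)$ used throughout. Passing to a local frame of $A$ and its dual coframe makes all of these checks routine.
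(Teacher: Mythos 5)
Your proposal is correct and follows essentially the same route as the paper: decompose $N$ as a sum of elementary tensors $\sum_i X_i\otimes\xi_i$, verify (i) and (ii) by a direct computation using $\varphi^{*}$-function linearity and the definition of $\phi_{A}^{\dagger}$, and deduce (iii)--(v) from these together with the invertibility of $\phi_{A}$. The only (harmless) differences are the ordering of the tensor factors and your explicit remark that $\phi_{A}(N)$ is independent of the decomposition, which the paper leaves implicit.
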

 
\begin{proof}
 We set $N:=\sum_{i}^{}\xi^{i}\otimes X_{i}$, where $\xi^{i}$'s are in $\Gamma (A^{*})$ and $X_{i}$'s are in $\Gamma (A)$. (i) We calculate
\begin{align*}
\phi_{A}(NX)=\phi_{A}\left(\sum_{i}^{}\langle \xi^{i},X\rangle X_{i}\right)=\sum_{i}^{}\varphi^{*}\langle \xi^{i},X\rangle \phi_{A}(X_{i}).
\end{align*} 
On the other hand, by $\phi_{A}(N)=\sum_{i}^{}\phi_{A}^{\dagger }(\xi^{i})\otimes \phi_{A}(X_{i})$, we obtain
\begin{align*}
\phi_{A}(N)\phi_{A}(X)=\sum_{i}^{}\langle \phi_{A}^{\dagger }(\xi^{i}),\phi_{A}(X)\rangle \phi_{A}(X_{i})=\sum_{i}^{}\varphi^{*}\langle \xi^{i},X\rangle \phi_{A}(X_{i}).
\end{align*}
(ii) By $\phi_{A}(N^{*})=\sum_{i}^{}\phi_{A}(X^{i})\otimes \phi_{A}^{\dagger}(\xi_{i})$ since $N^{*}=\sum_{i}^{}X^{i}\otimes \xi_{i}$, we obtain
\begin{align*}
\phi_{A}(N^{*}\xi)&=\phi_{A}\left(\sum_{i}^{}\langle \xi,X_{i}\rangle \xi^{i}\right)=\sum_{i}^{}\varphi^{*}\langle \xi,X_{i}\rangle \phi_{A}^{\dagger}(\xi^{i}),\\
\phi_{A}(N^{*}\phi_{A}^{\dagger }(\xi))&=\sum_{i}^{}\langle \phi_{A}^{\dagger }(\xi),\phi_{A}(X_{i})\rangle \phi_{A}^{\dagger }(\xi^{i})=\sum_{i}^{}\varphi^{*}\langle \xi,X_{i}\rangle \phi_{A}^{\dagger}(\xi^{i}).
\end{align*}
(iii) For any $\alpha $ in $\Gamma (A^{*})$ and $X$ in $\Gamma (M)$, we obtain
\begin{align*}
\langle \phi_{A}(N^{*})\alpha ,X\rangle &=\left\langle \sum_{i}^{}\langle \alpha ,\phi_{A}(X_{i}) \rangle \phi_{A}^{\dagger }(\xi^{i}),X\right\rangle=\sum_{i}^{}\langle \alpha ,\phi_{A}(X_{i}) \rangle \langle \phi_{A}^{\dagger }(\xi^{i}),X\rangle,\\
\langle \phi_{A}(N)^{*}\alpha ,X\rangle &=\langle \alpha ,\phi_{A}(N)X\rangle \\ &=\left\langle  \alpha ,\sum_{i}^{}\langle \phi_{A}^{\dagger }(\xi^{i}), X \rangle \phi_{A}(X_{i})\right\rangle=\sum_{i}^{}\langle \phi_{A}^{\dagger }(\xi^{i}),X\rangle\langle \alpha ,\phi_{A}(X_{i}) \rangle .
\end{align*}
(iv) For any $X$ in $\Gamma (A)$, by using (i), we obtain
\begin{align*}
\phi_{A}(N\circ N')X&=\phi_{A}((N\circ N')(\phi_{A}^{-1}(X)))=\phi_{A}(N(N'(\phi_{A}^{-1}(X))))\\
                    &=\phi_{A}(N)\phi_{A}(N'(\phi_{A}^{-1}(X)))=\phi_{A}(N)(\phi_{A}(N')X)\\
                    &=(\phi_{A}(N)\circ \phi_{A}(N'))X.
\end{align*}
Finally, for any $X$ in $\Gamma (A)$, by using (i), we obtain
\begin{align*}
(N\circ \phi_{A}-\phi_{A}\circ N)(X)&=N(\phi_{A}(X))-\phi_{A}(NX)\\
                                    &=N(\phi_{A}(X))-\phi_{A}(N)(\phi_{A}(X))\\
                                    &=(N-\phi_{A}(N))(\phi_{A}(X)).
\end{align*}
Since $X$ is arbitrary, $\phi_{A}(X)$ is also arbitrary. Therefore (v) holds.
 \end{proof}
 
 Let $(A, \varphi,\phi_{A},[\cdot,\cdot]_{A},a_{A})$ be a Hom-Lie algebroid over $M$. We define the bracket $[\cdot,\cdot]_{N}$ on $\Gamma(A)$ for a bundle map $N:A\longrightarrow A$ over $M$ by
 \begin{align}
&[X,Y]_{N}:=[NX,Y]_{A}+[X,NY]_{A}-N[X,Y]_{A}
\end{align}
for any $X$ and $Y$ in $\Gamma (A)$. Hom-Nijenhuis structures modify Hom-Lie algebroid structures on $A$ as well as Nijenhuis structures do Lie algebroid structures \cite{CNN}, \cite{KMa}, \cite{V}.

 \begin{theorem}
 Let $N$ be a Hom-Nijenhuis structure on a Hom-Lie algebroid $(A, \varphi,\phi_{A},[\cdot,\cdot]_{A},a_{A})$. Then $A_{N}=(A,\varphi, \phi_{A}, [\cdot,\cdot]_{N} ,a_{A}\circ N)$ is a Hom-Lie algebroid.
 \end{theorem}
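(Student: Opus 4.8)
The plan is to verify directly that the quadruple $A_{N}=(A,\varphi,\phi_{A},[\cdot,\cdot]_{N},a_{A}\circ N)$ satisfies all the axioms in Definition \ref{Hom-Lie algebroid-def}. First I would record the preliminary facts that make everything work: since $N$ is $\phi_{A}$-invariant, Lemma \ref{phi-N-lemma}(v) gives $N\circ\phi_{A}=\phi_{A}\circ N$, and combined with (i) this yields $\phi_{A}(NX)=N(\phi_{A}(X))$ for all $X\in\Gamma(A)$; also $(A,\varphi,\phi_{A})$ is already a Hom-bundle, so that part is inherited for free. The new anchor map $a_{N}:=a_{A}\circ N$ is still a bundle map $A\to\varphi^{!}TM$, so the data are of the right type.

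The bulk of the work is to check the Hom-Lie algebra structure on $(\Gamma(A),\phi_{A},[\cdot,\cdot]_{N})$. Skew-symmetry of $[\cdot,\cdot]_{N}$ is immediate from the skew-symmetry of $[\cdot,\cdot]_{A}$. For $\phi_{A}([X,Y]_{N})=[\phi_{A}X,\phi_{A}Y]_{N}$ I would expand the right side using the definition of $[\cdot,\cdot]_{N}$ and push $\phi_{A}$ through using that $\phi_{A}$ is an algebra homomorphism for $[\cdot,\cdot]_{A}$ together with $N\circ\phi_{A}=\phi_{A}\circ N$; the terms match termwise. The Hom-Jacobi identity for $[\cdot,\cdot]_{N}$ is the heart of the matter: the standard computation (as for Nijenhuis structures on Lie algebroids, see \cite{CNN},\cite{KMa},\cite{V}) shows that the cyclic sum $\sum[\phi_{A}X,[Y,Z]_{N}]_{N}$ equals, up to applications of $N$ and $\phi_{A}$, a combination of the Hom-Jacobi identity for $[\cdot,\cdot]_{A}$ (which vanishes) and terms built from the Nijenhuis torsion $\mathcal{T}_{N}$ (which vanishes by hypothesis). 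The $\phi_{A}$-invariance of $N$ is exactly what is needed to commute $N$ past $\phi_{A}$ so that the torsion terms organize correctly; without it the cancellation fails, which is why it is part of the definition of a Hom-Nijenhuis structure.

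Next I would verify the Leibniz rule, axiom (i): using $[X,fY]_{A}=\varphi^{*}f[X,Y]_{A}+a_{A}(\phi_{A}X)(f)\phi_{A}Y$ three times in the definition of $[X,fY]_{N}$, the $\varphi^{*}f$-terms collect into $\varphi^{*}f[X,Y]_{N}$, and the derivative terms collect into $a_{A}(\phi_{A}(NX))(f)\phi_{A}Y+a_{A}(\phi_{A}X)(f)\phi_{A}(NY)-a_{A}(\phi_{A}X)(f)\phi_{A}(NY)=a_{A}(\phi_{A}(NX))(f)\phi_{A}Y=a_{N}(\phi_{A}X)(f)\phi_{A}Y$, using $\phi_{A}(NX)=N(\phi_{A}X)$ and $a_{N}=a_{A}\circ N$; this is exactly the required right-hand side. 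Finally, axiom (ii): part (a), $a_{N}(\phi_{A}X)\circ\varphi^{*}=\varphi^{*}\circ a_{N}(X)$, follows from (ii)(a) for $a_{A}$ applied to $NX$ after again using $\phi_{A}(NX)=N\phi_{A}X$. Part (b), the anchor-bracket compatibility $a_{N}([X,Y]_{N})\circ\varphi^{*}=a_{N}(\phi_{A}X)\circ a_{N}(Y)-a_{N}(\phi_{A}Y)\circ a_{N}(X)$, is the other genuinely substantive check: expand the left side as $a_{A}(N[X,Y]_{N})\circ\varphi^{*}$, write $N[X,Y]_{N}=[NX,NY]_{A}-\mathcal{T}_{N}(X,Y)$ (directly from the definitions of $[\cdot,\cdot]_{N}$ and $\mathcal{T}_{N}$), drop the torsion term, and apply (ii)(b) for $a_{A}$ to the pair $NX,NY$, again moving $N$ past $\phi_{A}$. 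I expect the Hom-Jacobi identity and this axiom (ii)(b) to be the two places where the torsion-vanishing and $\phi_{A}$-invariance hypotheses are really used; the remaining checks are routine bookkeeping, so in the writeup I would present the key identity $N[X,Y]_{N}=[NX,NY]_{A}-\mathcal{T}_{N}(X,Y)$ and the commutation $N\phi_{A}=\phi_{A}N$ explicitly and indicate the rest as direct computation.
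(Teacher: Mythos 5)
Your proposal is correct and follows essentially the same route as the paper's proof: it verifies the Hom-Lie algebra axioms (homomorphism property and Hom-Jacobi, using $N\circ\phi_{A}=\phi_{A}\circ N$ and $\mathcal{T}_{N}=0$ to regroup the cyclic sum into vanishing Hom-Jacobi expressions for $[\cdot,\cdot]_{A}$), then the Leibniz rule, and then the anchor conditions via $N[X,Y]_{N}=[NX,NY]_{A}$. The only cosmetic difference is that the paper checks axiom (ii) in the equivalent form of Remark \ref{def iikae}, which changes nothing of substance.
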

 
 \begin{proof}
First, we show that $(\Gamma (A),[\cdot,\cdot]_{N},\phi_{A})$ is a Hom-Lie algebra. For any $X$ and $Y$ in $\Gamma (A)$, we culculate
 \begin{align*}
\phi_{A}([X,Y]_{N})&=\phi_{A}([NX,Y]_{A}+[X,NY]_{A}-N[X,Y]_{A})\\
                   &=[\phi_{A}(NX),\phi_{A}(Y)]_{A}+[\phi_{A}(X),\phi_{A}(NY)]_{A}-N(\phi_{A}([X,Y]_{A}))\\
                   &=[N(\phi_{A}(X)),\phi_{A}(Y)]_{A}+[\phi_{A}(X),N(\phi_{A}(Y))]_{A}-N[\phi_{A}(X),\phi_{A}(Y)]_{A}\\
                   &=[\phi_{A}(X),\phi_{A}(Y)]_{N}
\end{align*}
by Lemma \ref{phi-N-lemma} and the assumption that $\phi_{A}$ is an algebra homomorphism with respect to $(\Gamma (A),[\cdot,\cdot]_{A})$. Hence $\phi_{A}$ is an algebra homomorphism with respect to $(\Gamma (A),[\cdot,\cdot]_{N})$. By straightforward computation, it follows that for any $X,Y$ and $Z$ in $\Gamma (A)$,
 \begin{align*}
&\sum_{\mbox{Cycl}(X,Y,Z)}[\phi_{A}(X),[Y,Z]_{N}]_{N}\\
&=\!\!\sum_{\mbox{Cycl}(NX,NY,Z)}\! \![\phi_{A}(NX),[NY,Z]_{A}]_{A}+\!\!\sum_{\mbox{Cycl}(NX,Y,NZ)}\!\![\phi_{A}(NX),[Y,NZ]_{A}]_{A}\\
&\quad +\!\!\sum_{\mbox{Cycl}(X,NY,NZ)}\!\![\phi_{A}(X),[NY,NZ]_{A}]_{A}-N\left(\sum_{\mbox{Cycl}(NX,Y,Z)}\!\![\phi_{A}(NX),[Y,Z]_{A}]_{A}\right)\\
&\quad -N\left(\sum_{\mbox{Cycl}(X,NY,Z)}\!\![\phi_{A}(X),[NY,Z]_{A}]_{A}\right)-N\left(\sum_{\mbox{Cycl}(X,Y,NZ)}\!\![\phi_{A}(X),[Y,NZ]_{A}]_{A}\right)\\
&\quad +N^{2}\left(\sum_{\mbox{Cycl}(X,Y,Z)}[\phi_{A}(X),[Y,Z]_{A}]_{A}\right).
\end{align*}
Since $[\cdot,\cdot]_{A}$ satisfies the Hom-Jacobi identity, the right hand side is equal to $0$. Therefore $(\Gamma (A),[\cdot,\cdot]_{N},\phi_{A})$ is a Hom-Lie algebra.

Second, we compute for any $X,Y$ in $\Gamma (A)$ and $f$ in $C^{\infty}(M)$,
 \begin{align*}
[X,fY]_{N}&=[NX,fY]_{A}+[X,N(fY)]_{A}-N[X,fY]_{A}\\
          &=\varphi^{*}f[NX,Y]_{A}+a_{A}(\phi_{A}(NX))(f)\phi_{A}(Y)\\
          &\quad +\varphi^{*}f[X,NY]_{A}+a_{A}(\phi_{A}(X))(f)\phi_{A}(NY)\\
          &\quad -N(\varphi^{*}f[X,Y]_{A}+a_{A}(\phi_{A}(X))(f)\phi_{A}(Y))\\
          &=\varphi^{*}f([NX,Y]_{A}+[X,NY]_{A}-N[X,Y]_{A})\\
          &\quad +(a_{A}\circ N)(\phi_{A}(X))(f)\phi_{A}(Y),
\end{align*}
where we use Lemma \ref{phi-N-lemma} and the assumption that $(A, \varphi,\phi_{A},[\cdot,\cdot]_{A},a_{A})$ is a Hom-Lie algebroid.

Finally, for any $X$ and $Y$ in $\Gamma (A)$, we obtain
 \begin{align*}
(a_{A}\circ N)(\phi_{A}(X))&=a_{A}(N(\phi_{A}(X)))=a_{A}(\phi_{A}(NX))=\mbox{Ad}_{\varphi^{*}}(a_{A}(NX))\\
                           &=\mbox{Ad}_{\varphi^{*}}((a_{A}\circ N)(X))
\end{align*}
and
 \begin{align*}
(a_{A}\circ N)([X,Y]_{N})&=a_{A}(N[X,Y]_{N})=a_{A}([NX,NY]_{A})\\
                         &=[a_{A}(NX),a_{A}(NY)]_{\varphi^{*}}=[(a_{A}\circ N)(X),(a_{A}\circ N)(Y)]_{\varphi^{*}}
\end{align*}
since $(A, \varphi,\phi_{A},[\cdot,\cdot]_{A},a_{A})$ is a Hom-Lie algebroid and $N$ is Hom-Nijenhuis.

From the above, we conclude that $A_{N}=(A,\varphi, \phi_{A}, [\cdot,\cdot]_{N} ,a_{A}\circ N)$ is a Hom-Lie algebroid. 
 \end{proof}

The Lie derivative on the $(1,1)$-tensor fields satisfies the following.
  
 \begin{prop}
 Let $(A, \varphi,\phi_{A},[\cdot,\cdot]_{A},a_{A})$ be a Hom-Lie algebroid over $M$ and $N,N':A\longrightarrow A$ two bundle maps over $M$. Then the followings hold:
 \begin{enumerate}
 \item[(i)] $(\mathcal{L}_{X}^{A}N)Y=\mathcal{L}_{X}^{A}(N(\phi_{A}^{-1}(Y)))-\phi_{A}(N)\mathcal{L}_{X}^{A}(\phi_{A}^{-1}(Y))$ for any $X$ and $Y$ in $\Gamma (A)$;
 \item[(ii)] $\mathcal{L}_{X}^{A}(N\circ N')=\mathcal{L}_{X}^{A}N\circ \phi_{A}(N')+\phi_{A}(N)\circ \mathcal{L}_{X}^{A}N'$ for any $X$ in $\Gamma (A)$.
 \end{enumerate}
 \end{prop}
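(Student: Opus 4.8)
The plan is to regard the bundle map $N$ as a $(1,1)$-tensor and to reduce both statements to the Lie-derivative identities already recorded in Section~\ref{section:Pleriminaries}. As in the proof of Lemma~\ref{phi-N-lemma}, write $N=\sum_{i}\xi^{i}\otimes X_{i}$ with $\xi^{i}\in\Gamma(A^{*})$ and $X_{i}\in\Gamma(A)$, so that $NY=\sum_{i}\langle\xi^{i},Y\rangle X_{i}$, $\phi_{A}(N)=\sum_{i}\phi_{A}^{\dagger}(\xi^{i})\otimes\phi_{A}(X_{i})$, and, by the definition of the Lie derivative on tensor sections, $\mathcal{L}_{X}^{A}N=\sum_{i}\bigl(\mathcal{L}_{X}^{A}\xi^{i}\otimes\phi_{A}(X_{i})+\phi_{A}^{\dagger}(\xi^{i})\otimes\mathcal{L}_{X}^{A}X_{i}\bigr)$.

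For (i), evaluating the last expression on $Y$ gives $(\mathcal{L}_{X}^{A}N)Y=\sum_{i}\langle\mathcal{L}_{X}^{A}\xi^{i},Y\rangle\,\phi_{A}(X_{i})+\sum_{i}\langle\phi_{A}^{\dagger}(\xi^{i}),Y\rangle\,\mathcal{L}_{X}^{A}X_{i}$. Setting $f_{i}:=\langle\xi^{i},\phi_{A}^{-1}(Y)\rangle$, so that $\varphi^{*}f_{i}=\langle\phi_{A}^{\dagger}(\xi^{i}),Y\rangle$ by the definition of $\phi_{A}^{\dagger}$, condition (i) of Definition~\ref{Hom-Lie algebroid-def} gives $\mathcal{L}_{X}^{A}(N(\phi_{A}^{-1}(Y)))=[X,\sum_{i}f_{i}X_{i}]_{A}=\sum_{i}\langle\phi_{A}^{\dagger}(\xi^{i}),Y\rangle\,\mathcal{L}_{X}^{A}X_{i}+\sum_{i}a_{A}(\phi_{A}(X))(f_{i})\,\phi_{A}(X_{i})$, while $\phi_{A}(N)\mathcal{L}_{X}^{A}(\phi_{A}^{-1}(Y))=\sum_{i}\langle\phi_{A}^{\dagger}(\xi^{i}),\mathcal{L}_{X}^{A}(\phi_{A}^{-1}(Y))\rangle\,\phi_{A}(X_{i})$. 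The $\mathcal{L}_{X}^{A}X_{i}$-terms cancel between $(\mathcal{L}_{X}^{A}N)Y$ and $\mathcal{L}_{X}^{A}(N(\phi_{A}^{-1}(Y)))$, and what remains of (i) is $\langle\mathcal{L}_{X}^{A}\xi^{i},Y\rangle=a_{A}(\phi_{A}(X))(f_{i})-\langle\phi_{A}^{\dagger}(\xi^{i}),\mathcal{L}_{X}^{A}(\phi_{A}^{-1}(Y))\rangle$ for each $i$, which is precisely the identity for $\langle\mathcal{L}_{X}^{A}\alpha,Y\rangle$ displayed just after (\ref{A-Lie derivative extension}), applied to $\alpha=\xi^{i}$ (recall $\mathcal{L}_{X}^{A}(\phi_{A}^{-1}(Y))=[X,\phi_{A}^{-1}(Y)]_{A}$). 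Hence (i) holds; since both sides depend only on $N$, the choice of decomposition is immaterial.

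For (ii), apply (i) with $N\circ N'$ in place of $N$, so that $(\mathcal{L}_{X}^{A}(N\circ N'))Y=\mathcal{L}_{X}^{A}((N\circ N')(\phi_{A}^{-1}(Y)))-\phi_{A}(N\circ N')\,\mathcal{L}_{X}^{A}(\phi_{A}^{-1}(Y))$. By Lemma~\ref{phi-N-lemma}(i), $N'(\phi_{A}^{-1}(Y))=\phi_{A}^{-1}(\phi_{A}(N')Y)$, hence $(N\circ N')(\phi_{A}^{-1}(Y))=N\bigl(\phi_{A}^{-1}(\phi_{A}(N')Y)\bigr)$; applying (i) to $N$ with $\phi_{A}(N')Y$ in place of $Y$, and then (i) to $N'$ to rewrite $\mathcal{L}_{X}^{A}(\phi_{A}^{-1}(\phi_{A}(N')Y))=\mathcal{L}_{X}^{A}(N'(\phi_{A}^{-1}(Y)))$, and using $\phi_{A}(N\circ N')=\phi_{A}(N)\circ\phi_{A}(N')$ from Lemma~\ref{phi-N-lemma}(iv), the term $\phi_{A}(N)\phi_{A}(N')\,\mathcal{L}_{X}^{A}(\phi_{A}^{-1}(Y))$ cancels and one is left with $(\mathcal{L}_{X}^{A}(N\circ N'))Y=(\mathcal{L}_{X}^{A}N)(\phi_{A}(N')Y)+\phi_{A}(N)\bigl((\mathcal{L}_{X}^{A}N')Y\bigr)$. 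As $Y\in\Gamma(A)$ is arbitrary, this is (ii).

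The \emph{main obstacle} is the bookkeeping in part (i): one has to keep the two occurrences of $\phi_{A}^{\dagger}$ coming from the tensorial Lie derivative separate from those produced by the $\varphi^{*}$-linearity of $\phi_{A}$ and by the Leibniz rule of Definition~\ref{Hom-Lie algebroid-def}, and then recognise that, after these substitutions, both sides of (i) organise into the same two families of terms --- the coefficients of $\mathcal{L}_{X}^{A}X_{i}$ and of $\phi_{A}(X_{i})$. Once (i) is available, part (ii) is a purely formal consequence of (i) and Lemma~\ref{phi-N-lemma}, so no genuine difficulty arises there.
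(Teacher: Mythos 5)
Your proposal is correct and follows essentially the same route as the paper: decompose $N=\sum_i\xi^i\otimes X_i$, evaluate the tensorial Lie derivative, and reduce (i) to the displayed identity for $\langle\mathcal{L}_X^A\alpha,Y\rangle$ together with the Leibniz rule of Definition~\ref{Hom-Lie algebroid-def}, then derive (ii) formally from (i) and Lemma~\ref{phi-N-lemma}(i),(iv). The only (immaterial) difference is organizational: the paper writes out all three terms of (i) and compares them, while you cancel the $\mathcal{L}_X^AX_i$-terms first and match the remaining coefficients of $\phi_A(X_i)$.
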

 
\begin{proof}
 We set $N:=\sum_{i}^{}\xi^{i}\otimes X_{i}$, where $\xi^{i}$ is in $\Gamma (A^{*})$ and $X_{i}$ is in $\Gamma (A)$. We obtain
\begin{align*}
(\mathcal{L}_{X}^{A}N)Y&=\left(\sum_{i}^{}\mathcal{L}_{X}^{A}\xi^{i}\otimes \phi_{A}(X_{i})+\sum_{i}^{}\phi_{A}^{\dagger }(\xi^{i})\otimes \mathcal{L}_{X}^{A}X_{i}\right)Y\\
                       &=\sum_{i}^{}\langle \mathcal{L}_{X}^{A}\xi^{i}, Y\rangle \phi_{A}(X_{i})+\sum_{i}^{}\langle \phi_{A}^{\dagger }(\xi^{i}),Y\rangle \mathcal{L}_{X}^{A}X_{i}\\
                       &=\sum_{i}^{}(a_{A}(\phi_{A}(X))(\langle \xi^{i}, \phi_{A}^{-1}(Y)\rangle )\\
                       &\qquad \qquad-\langle \phi_{A}^{\dagger }(\xi^{i}), \mathcal{L}_{X}^{A}(\phi_{A}^{-1}(Y))\rangle )\phi_{A}(X_{i})\\
                       &\qquad +\sum_{i}^{}\varphi^{*}\langle \phi_{A}^{\dagger }(\xi^{i}),Y\rangle \mathcal{L}_{X}^{A}X_{i}\\
\mathcal{L}_{X}^{A}(N(\phi_{A}^{-1}(Y)))&=\mathcal{L}_{X}^{A}\left(\sum_{i}\langle \xi^{i}, \phi_{A}^{-1}(Y)\rangle X_{i}\right)\\
                                        &=\sum_{i}(\varphi^{*}\langle \xi^{i}, \phi_{A}^{-1}(Y)\rangle \mathcal{L}_{X}^{A}X_{i}\\
                                        &\qquad \qquad +a_{A}(\phi_{A}(X))(\langle \xi^{i}, \phi_{A}^{-1}(Y)\rangle)\phi_{A}(X_{i}))\\ 
\phi_{A}(N)\mathcal{L}_{X}^{A}(\phi_{A}^{-1}(Y))&=\sum_{i}^{}\langle \phi_{A}^{\dagger }(\xi^{i}), \mathcal{L}_{X}^{A}(\phi_{A}^{-1}(Y))\rangle )\phi_{A}(X_{i}),
\end{align*}
so that (i) holds. For any $Y$ in $\Gamma (A)$, we calculate
\begin{align*}
(\mathcal{L}_{X}^{A}(N\circ N'))Y&=\mathcal{L}_{X}^{A}((N\circ N')(\phi_{A}^{-1}(Y)))-\phi_{A}(N\circ N')\mathcal{L}_{X}^{A}(\phi_{A}^{-1}(Y))\\
                                 &=\mathcal{L}_{X}^{A}(N(N'(\phi_{A}^{-1}(Y))))-\phi_{A}(N)(\phi_{A}(N')\mathcal{L}_{X}^{A}(\phi_{A}^{-1}(Y)))\\
                                 &=(\mathcal{L}_{X}^{A}N)(\phi_{A}(N'(\phi_{A}^{-1}(Y))))+\phi_{A}(N)\mathcal{L}_{X}^{A}(N'(\phi_{A}^{-1}(Y)))\\
                                 &\qquad -\phi_{A}(N)(\phi_{A}(N')\mathcal{L}_{X}^{A}(\phi_{A}^{-1}(Y)))\\
                                 &=(\mathcal{L}_{X}^{A}N)(\phi_{A}(N')Y)\\
                                 &\qquad +\phi_{A}(N)(\mathcal{L}_{X}^{A}(N'(\phi_{A}^{-1}(Y)))-\phi_{A}(N')\mathcal{L}_{X}^{A}(\phi_{A}^{-1}(Y)))\\
                                 &=(\mathcal{L}_{X}^{A}N)(\phi_{A}(N')Y)+\phi_{A}(N)((\mathcal{L}_{X}^{A}N')Y)\\
                                 &=(\mathcal{L}_{X}^{A}N\circ \phi_{A}(N')+\phi_{A}(N)\circ \mathcal{L}_{X}^{A}N')Y.
\end{align*}
Hence (ii) holds.
 \end{proof}
 
 As for Nijenhuis structures \cite{K3}, the following holds for Hom-Nijenhuis structures.
 
 \begin{prop}
 Let $N$ be a Hom-Nijenhuis structure on a Hom-Lie algebroid $(A, \varphi,\phi_{A},[\cdot,\cdot]_{A},a_{A})$ on $M$. Then for any $f$ in $C^\infty(M)$,
 \begin{enumerate}
 \item[(i)]  $d_{N}f=N^{*}d_{A}f$;
 \item[(ii)] $d_{N}d_{A}f=-d_{A}d_{N}f$,
 \end{enumerate}
 where $d_{N}$ and $d_{A}$ are the differentials of the Hom-Lie algebroids $A_{N}$ and $A$ respectively.
 \end{prop}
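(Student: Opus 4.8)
The plan is to prove (i) directly from the defining formula of the differential, and then feed (i) into (ii).

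For (i) I would apply the formula (\ref{A-gaibibun}) for the differential of the Hom-Lie algebroid $A_{N}=(A,\varphi,\phi_{A},[\cdot,\cdot]_{N},a_{A}\circ N)$ to the $0$-form $f$. Since $f$ has no arguments to permute, only the $i=0$ summand of the first sum survives and the second sum is empty, so $(d_{N}f)(X)=(a_{A}\circ N)(X)(f)=a_{A}(NX)(f)$ for every $X\in\Gamma(A)$. Applying the same formula to $d_{A}f$ gives $\langle N^{*}d_{A}f,X\rangle=\langle d_{A}f,NX\rangle=a_{A}(NX)(f)$, and therefore $d_{N}f=N^{*}d_{A}f$.

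For (ii), by (i) it is enough to show $(d_{N}d_{A}f)(X,Y)+(d_{A}(N^{*}d_{A}f))(X,Y)=0$ for all $X,Y\in\Gamma(A)$. I would set $\alpha:=d_{A}f$, noting $d_{A}\alpha=d_{A}^{2}f=0$, and expand both $(d_{N}\alpha)(X,Y)$ and $(d_{A}(N^{*}\alpha))(X,Y)$ via (\ref{A-gaibibun}) with $k=1$. In the first, the bracket term contains $[\phi_{A}^{-1}(X),\phi_{A}^{-1}(Y)]_{N}$, which I would expand by the definition of $[\cdot,\cdot]_{N}$ and simplify using the $\phi_{A}$-invariance in the form $N\circ\phi_{A}^{-1}=\phi_{A}^{-1}\circ N$ (Lemma \ref{phi-N-lemma}(v)). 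In the second, I would use $\phi_{A}(N)=N$ together with Lemma \ref{phi-N-lemma}(iii) and (ii) to get $\phi_{A}^{\dagger}(N^{*}\alpha)=N^{*}\phi_{A}^{\dagger}(\alpha)$, and rewrite $\langle N^{*}\alpha,\phi_{A}^{-1}(Z)\rangle=\langle\alpha,\phi_{A}^{-1}(NZ)\rangle$. Adding the two expanded expressions, the two occurrences of $\langle\phi_{A}^{\dagger}(\alpha),N[\phi_{A}^{-1}(X),\phi_{A}^{-1}(Y)]_{A}\rangle$ cancel against each other, leaving four ``anchor'' terms together with $-\langle\phi_{A}^{\dagger}(\alpha),[\phi_{A}^{-1}(NX),\phi_{A}^{-1}(Y)]_{A}\rangle-\langle\phi_{A}^{\dagger}(\alpha),[\phi_{A}^{-1}(X),\phi_{A}^{-1}(NY)]_{A}\rangle$. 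The last step is to invoke the closedness of $\alpha$: evaluating $0=(d_{A}\alpha)(U,V)$ at $(U,V)=(NX,Y)$ and at $(U,V)=(X,NY)$ rewrites each of these two bracket terms as a difference of two anchor terms, and after this substitution all eight anchor contributions cancel in pairs. Thus the sum vanishes, i.e.\ $d_{N}d_{A}f=-d_{A}d_{N}f$.

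The computation is essentially formal; the only real difficulty is the bookkeeping, since every anchor and every bracket occurring in (\ref{A-gaibibun}) carries a $\phi_{A}^{\pm1}$ or $\phi_{A}^{\dagger}$, so one must commute $N$ and $N^{*}$ past these factors consistently before collecting terms — which is exactly where the $\phi_{A}$-invariance of $N$ is needed. I would emphasize that the vanishing of the Nijenhuis torsion $\mathcal{T}_{N}$ plays no role in the cancellation itself; it enters only through the fact that $A_{N}$ is a Hom-Lie algebroid, so that $d_{N}$ is defined. Conceptually, (ii) is the restriction to $0$-forms of the operator identity $d_{N}=\iota_{N}\circ d_{A}-d_{A}\circ\iota_{N}$, where $\iota_{N}$ is the degree-$0$ derivation with $\iota_{N}f=0$ and $\iota_{N}\alpha=N^{*}\alpha$ on $1$-forms; granting it, $d_{N}\circ d_{A}+d_{A}\circ d_{N}=\iota_{N}\circ d_{A}^{2}-d_{A}^{2}\circ\iota_{N}=0$, but the direct argument above avoids introducing $\iota_{N}$ at all.
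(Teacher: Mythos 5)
Your proof is correct, and part (i) is verbatim the paper's argument. For part (ii) the overall scaffolding is the same as the paper's --- expand both $(d_{N}d_{A}f)(X,Y)$ and $(d_{A}d_{N}f)(X,Y)$ via (\ref{A-gaibibun}), use Lemma \ref{phi-N-lemma}(v) to push $N$ through $\phi_{A}^{-1}$ and $\phi_{A}^{\dagger}$, and watch the $N[\cdot,\cdot]_{A}$ contributions cancel --- but the engine of the final cancellation differs. You invoke $d_{A}\alpha=0$ for $\alpha=d_{A}f$, evaluated at the shifted arguments $(NX,Y)$ and $(X,NY)$, to trade each surviving bracket term for a difference of anchor terms; the paper instead never mentions $d_{A}^{2}=0$ and works directly from the axioms of Definition \ref{Hom-Lie algebroid-def}: condition (ii)(b) converts the composed-anchor terms into bracket terms and condition (ii)(a) aligns the $\varphi^{*}$'s so everything cancels. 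The two are logically equivalent (closedness of $d_{A}f$ on a Hom-Lie algebroid is exactly (ii)(a) plus (ii)(b)), so this is a difference of packaging rather than substance; your version is arguably cleaner in that it exhibits (ii) as a direct shadow of $d_{A}^{2}=0$, and your closing remark that the vanishing of $\mathcal{T}_{N}$ enters only through $A_{N}$ being a Hom-Lie algebroid is accurate and matches the paper, which likewise uses only the $\phi_{A}$-invariance of $N$ in this computation.
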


 \begin{proof}
 (i) For any $X$ in $\Gamma (A)$,
 \begin{align*}
\langle d_{N}f,X\rangle=(a_{A}\circ N)(X)(f)=a_{A}(NX)(f)=\langle d_{A}f,NX\rangle=\langle N^{*}d_{A}f,X\rangle .
\end{align*}
(ii) For any $X$ and $Y$ in $\Gamma (A)$,
 \begin{align*}
(d_{N}d_{A}f)(X,Y)&=(a_{A}\circ N)(X)(\langle d_{A}f, \phi_{A}^{-1}(Y)\rangle)-(a_{A}\circ N)(Y)(\langle d_{A}f, \phi_{A}^{-1}(X)\rangle)\\
                  &\qquad -\langle \phi_{A}^{\dagger}(d_{A}f), [\phi_{A}^{-1}(X),\phi_{A}^{-1} (Y)]_{N}\rangle\\
                  &=(a_{A}(NX)(a_{A}(\phi_{A}^{-1}(Y))(f))-(a_{A}(NY)(a_{A}(\phi_{A}^{-1}(X))(f))\\
                  &\qquad -\varphi^{*}(a_{A}(\phi_{A}^{-1}([\phi_{A}^{-1}(X),\phi_{A}^{-1} (Y)]_{N}))(f))\\
                  &=(a_{A}(NX)(a_{A}(\phi_{A}^{-1}(Y))(f))-(a_{A}(NY)(a_{A}(\phi_{A}^{-1}(X))(f))\\
                  &\qquad -\varphi^{*}(a_{A}(\phi_{A}^{-1}([N(\phi_{A}^{-1}(X)),\phi_{A}^{-1} (Y)]_{A}))(f))\\
                  &\qquad -\varphi^{*}(a_{A}(\phi_{A}^{-1}([\phi_{A}^{-1}(X),N(\phi_{A}^{-1} (Y))]_{A}))(f))\\
                  &\qquad +\varphi^{*}(a_{A}(\phi_{A}^{-1}(N([\phi_{A}^{-1}(X),\phi_{A}^{-1} (Y)]_{A})))(f))\\
(d_{A}d_{N}f)(X,Y)&=a_{A}(X)(\langle d_{N}f, \phi_{A}^{-1}(Y)\rangle)-a_{A}(Y)(\langle d_{N}f, \phi_{A}^{-1}(X)\rangle)\\
                  &\qquad -\langle \phi_{A}^{\dagger}(d_{N}f), [\phi_{A}^{-1}(X),\phi_{A}^{-1} (Y)]_{A}\rangle\\
                  &=a_{A}(X)(a_{A}(N(\phi_{A}^{-1}(Y)))(f))-a_{A}(Y)(a_{A}(N(\phi_{A}^{-1}(X)))(f))\\
                  &\qquad -\varphi^{*}(a_{A}(N(\phi_{A}^{-1}([\phi_{A}^{-1}(X),\phi_{A}^{-1} (Y)]_{A})))(f)),
\end{align*}
so that we calculate
 \begin{align*}
(d_{N}d_{A}f&+d_{A}d_{N}f)(X,Y)\\
            &=(a_{A}(NX)(a_{A}(\phi_{A}^{-1}(Y))(f))-(a_{A}(NY)(a_{A}(\phi_{A}^{-1}(X))(f))\\
            &\qquad -\varphi^{*}(a_{A}(\phi_{A}^{-1}([N(\phi_{A}^{-1}(X)),\phi_{A}^{-1} (Y)]_{A}))(f))\\
            &\qquad -\varphi^{*}(a_{A}(\phi_{A}^{-1}([\phi_{A}^{-1}(X),N(\phi_{A}^{-1} (Y))]_{A}))(f))\\
            &\qquad +\varphi^{*}(a_{A}(\phi_{A}^{-1}(N([\phi_{A}^{-1}(X),\phi_{A}^{-1} (Y)]_{A})))(f))\\
            &\qquad+a_{A}(X)(a_{A}(N(\phi_{A}^{-1}(Y)))(f))-a_{A}(Y)(a_{A}(N(\phi_{A}^{-1}(X)))(f))\\
            &\qquad -\varphi^{*}(a_{A}(N(\phi_{A}^{-1}([\phi_{A}^{-1}(X),\phi_{A}^{-1} (Y)]_{A})))(f))\\
            &=a_{A}([\phi_{A}^{-1}(NX), \phi_{A}^{-1}(Y)]_{A})(\varphi^{*}f)+a_{A}([\phi_{A}^{-1}(X), \phi_{A}^{-1}(NY)]_{A})(\varphi^{*}f)\\
            &\qquad -\varphi^{*}((\mbox{Ad}_{\varphi^{*}}^{-1}(a_{A}([\phi_{A}^{-1}(NX),\phi_{A}^{-1}(Y)]_{A})))(f))\\
            &\qquad -\varphi^{*}((\mbox{Ad}_{\varphi^{*}}^{-1}(a_{A}([\phi_{A}^{-1}(X),\phi_{A}^{-1}(NY)]_{A})))(f))\\
            &=\varphi^{*}((\varphi^{*})^{-1}(a_{A}([\phi_{A}^{-1}(NX),\phi_{A}^{-1}(Y)]_{A}))(\varphi^{*}f))\\
            &\qquad +\varphi^{*}((\varphi^{*})^{-1}(a_{A}([\phi_{A}^{-1}(X),\phi_{A}^{-1}(NY)]_{A}))(\varphi^{*}f))\\
            &\qquad -\varphi^{*}((\varphi^{*})^{-1}(a_{A}([\phi_{A}^{-1}(NX),\phi_{A}^{-1}(Y)]_{A}))(\varphi^{*}f))\\
            &\qquad -\varphi^{*}((\varphi^{*})^{-1}(a_{A}([\phi_{A}^{-1}(X),\phi_{A}^{-1}(NY)]_{A}))(\varphi^{*}f))\\
            &=0,
\end{align*}
where we use (v) in Lemma \ref{phi-N-lemma} and (ii) in Definition \ref{Hom-Lie algebroid-def}.
 \end{proof}
 
 \subsection{Hom-Poisson-Nijenhuis structures}\label{sub Hom-Poisson-Nijenhuis structures}
 
In this subsection, we introduce the notion of Hom-Poisson-Nijenhuis structures and prove the relation between Hom-Poisson-Nijenhuis structures and Hom-Lie bialgebroids and the existence of the hierarchy of Hom-Poisson-Nijenhuis structures, where these are the generalizations of properties of Poisson-Nijenhuis structures \cite{KMa}, \cite{K3} and main results in this paper.

 \begin{defn}
 Let $(A, \varphi,\phi_{A},[\cdot,\cdot]_{A},a_{A})$ be a Hom-Lie algebroid over $M$, $\pi $ a Hom-Poisson structure on $M$ and $N$ a Hom-Nijenhuis structure on $M$. Then $\pi$ and $N$ are {\it compatible} if they satisfy
\begin{eqnarray}
N\circ \pi ^\sharp =\pi ^\sharp \circ N^*, \label{kakansei}
\end{eqnarray}
and $C_\pi ^N$ given by
\begin{eqnarray}
C_\pi ^N(\alpha , \beta ):=[\alpha ,\beta ]_{N\pi ^\sharp }-[\alpha ,\beta ]_\pi ^{N^*} \label{compatibility hantei}
\end{eqnarray}
for any $\alpha $ and $\beta $ in $\Gamma (A^*)$ vanishes. Here we define
\begin{align}
[\alpha ,\beta ]_{N\pi ^\sharp }&:=\mathcal{L}_{N\pi ^\sharp \alpha }^{A}\beta -\mathcal{L}_{N\pi ^\sharp \beta }^{A}\alpha -d_{A}\langle N\pi ^\sharp \alpha ,\beta \rangle, \\
[\alpha ,\beta ]_\pi ^{N^*}&:=[N^*\alpha , \beta ]_\pi +[\alpha ,N^*\beta ]_\pi -N^*[\alpha ,\beta ]_\pi .
\end{align}
for any $\alpha $ and $\beta $ in $\Gamma (A^*)$. A pair $(\pi ,N)$ is a {\it Hom-Poisson-Nijenhuis structure} on $A$ if $\pi $ and $N$ is compatible.
 \end{defn}
 
 We also describe as $d_{N}$ and $\mathcal{L}^{N}$ the operators defined by the formula (\ref{A-gaibibun}), (\ref{A-Lie derivative}) and (\ref{A-Lie derivative extension}) for any bundle map $N:A\longrightarrow A$ respectively.
 
By using the $\phi_{A}$-invariances for a $2$-section and a bundle map, we have the following proposition: 
 
\begin{prop}\label{P-N structure equivalent condition}
 Let $(A, \varphi,\phi_{A},[\cdot,\cdot]_{A},a_{A})$ be a Hom-Lie algebroid over $M$, $\pi $ a $\phi_{A}$-invariant $2$-section in $\Gamma (\Lambda ^{2}A)$ and $N:A\longrightarrow A$ a $\phi_{A}$-invariant bundle map over $M$. We assume that $\pi^{\sharp}\circ N^{*}=N\circ \pi ^{\sharp}$. Then the following conditions are equivalent:
 \begin{enumerate}
\item[(i)]$C_{\pi}^{N}=0$; 
\item[(ii)]$[\cdot,\cdot]_{N,\pi}=[\cdot,\cdot]_{\pi_{N}}$;
\item[(iii)]$[\cdot,\cdot]_{N,\pi}=[\cdot,\cdot]_{\pi}^{N^{*}}$;
\item[(v)]For any $\alpha $ and $\beta $ in $\Gamma (A^{*})$,
\begin{align}
C_{(\pi,N)}'(\alpha,\beta):=(\mathcal{L}_{\pi^{\sharp}\alpha}^{A}N)^{*}&\phi_{A}^{\dagger}(\beta)-(\mathcal{L}_{\pi^{\sharp}\beta}^{A}N)^{*}\phi_{A}^{\dagger}(\alpha) \nonumber \\ 
                                                                      &+N^{*}d_{A}\langle \pi^{\sharp}\alpha ,\beta \rangle-d_{A}\langle \pi ^{\sharp }N^{*}\alpha,\beta \rangle
\end{align}
vanishes. 
\end{enumerate}
Here the bracket $[\cdot,\cdot]_{N,\pi}$ is defined by
\begin{align}
[\alpha ,\beta ]_{N,\pi}&:=\mathcal{L}_{\pi ^\sharp \alpha }^{N}\beta -\mathcal{L}_{\pi ^\sharp \beta }^{N}\alpha -d_{N}\langle \pi ^\sharp \alpha ,\beta \rangle
\end{align}
for any $\alpha $ and $\beta $ in $\Gamma (A^*)$. 
 \end{prop}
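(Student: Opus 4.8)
The plan is to establish a single chain of equivalent reformulations by computing both sides of the defining equations in terms of the same "master" expression. The starting observation is that, because $\pi$ is $\phi_A$-invariant, Proposition \ref{d pi property} gives $d_\pi D = [\pi,D]_A$, and Proposition \ref{pi invariance and commutativity} gives $\phi_A\circ\pi^\sharp = \pi^\sharp\circ\phi_A^\dagger$; likewise, since $N$ is $\phi_A$-invariant, part (v) of Lemma \ref{phi-N-lemma} gives $N\circ\phi_A = \phi_A\circ N$, hence also $N^*\circ\phi_A^\dagger = \phi_A^\dagger\circ N^*$. These commutation relations are exactly what is needed so that the two modified differentials $d_N$ and $d_\pi$, the two modified Lie derivatives $\mathcal{L}^N$ and $\mathcal{L}^{A^*_\pi}$, and the interior multiplications all interact with the $\phi_A$-twist in a controlled way. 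First I would record these as a short preliminary computation at the top of the proof.

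Next I would prove $(i)\Leftrightarrow(iii)$, which is essentially a rewriting: unravelling the definitions of $[\cdot,\cdot]_{N\pi^\sharp}$ and $[\cdot,\cdot]_\pi^{N^*}$ and of $C_\pi^N$, the vanishing of $C_\pi^N$ says precisely $[\cdot,\cdot]_{N\pi^\sharp} = [\cdot,\cdot]_\pi^{N^*}$; then one checks, using the hypothesis $N\circ\pi^\sharp = \pi^\sharp\circ N^*$ and the Hom-Cartan formula (\ref{A-Lie derivative}) for $d_N$, that $[\cdot,\cdot]_{N,\pi}$ coincides with $[\cdot,\cdot]_{N\pi^\sharp}$ when the two $\phi_A$-twists match up. This uses that $d_N f = N^* d_A f$ (the earlier proposition on Hom-Nijenhuis structures) so that $d_N\langle\pi^\sharp\alpha,\beta\rangle = N^* d_A\langle\pi^\sharp\alpha,\beta\rangle$, and that $\mathcal{L}^N_{\pi^\sharp\alpha}\beta$ expands via (\ref{A-Lie derivative}) into interior-product/$d_N$ terms. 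For $(ii)\Leftrightarrow(iii)$ I would invoke the fact, proved in the previous theorem on Hom-Nijenhuis structures, that $[\cdot,\cdot]_{\pi_N}$ — the bracket on $A^*_{\pi_N}$ associated to the Hom-Poisson structure $\pi_N$ defined by $\pi_N^\sharp := N\circ\pi^\sharp$ — is computed by formula (\ref{pi bracket}) with $\pi^\sharp$ replaced by $N\circ\pi^\sharp$; then $[\cdot,\cdot]_{\pi_N}$ and $[\cdot,\cdot]_\pi^{N^*}$ differ only by the Nijenhuis-twist bookkeeping, and equality of either with $[\cdot,\cdot]_{N,\pi}$ is the same condition.

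For $(iii)\Leftrightarrow(v)$, which I expect to be the technical heart, I would evaluate the difference $[\alpha,\beta]_{N,\pi} - [\alpha,\beta]_\pi^{N^*}$ as an element of $\Gamma(A^*)$, pairing against an arbitrary $X$ in $\Gamma(A)$ (or, equivalently, evaluating both $1$-forms on a section) and pushing the $\mathcal{L}^N$ and $\mathcal{L}^A$ derivatives through using the Leibniz rule (\ref{A-Lie derivative extension}) together with $\mathcal{L}^A_X(N(\phi_A^{-1}(Y))) - \phi_A(N)\mathcal{L}^A_X(\phi_A^{-1}(Y)) = (\mathcal{L}^A_X N)Y$ from the preceding proposition. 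After cancelling the terms that involve the Nijenhuis torsion $\mathcal{T}_N$ (which vanishes) and collecting the $d_A$-exact pieces, the remaining expression should be exactly $C'_{(\pi,N)}(\alpha,\beta)$ paired appropriately, where the two $(\mathcal{L}^A_{\pi^\sharp\alpha}N)^*\phi_A^\dagger(\beta)$ terms come from commuting $\mathcal{L}^A$ past $N^*$ and the two $d_A$-terms $N^*d_A\langle\pi^\sharp\alpha,\beta\rangle - d_A\langle\pi^\sharp N^*\alpha,\beta\rangle$ are the leftover of the $d_N$ versus $N^*d_A$ discrepancy combined with the hypothesis $N\pi^\sharp = \pi^\sharp N^*$. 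The main obstacle will be keeping the $\phi_A$ and $\phi_A^\dagger$ twists correctly placed throughout: every application of the Hom-Cartan formula and of the Lie-derivative Leibniz rule introduces a $\phi_A$-shift, and the statement of $C'_{(\pi,N)}$ already carries explicit $\phi_A^\dagger(\alpha),\phi_A^\dagger(\beta)$, so the bookkeeping has to be done so that these match on the nose rather than up to a stray $\phi_A^{\pm1}$. I would handle this by first normalising everything — replacing $\alpha,\beta$ by $\phi_A^\dagger(\alpha),\phi_A^\dagger(\beta)$ at the outset, as was done in the proof of Proposition \ref{pi pi calculus} — so that the twists are symmetric and cancel cleanly, and only at the very end undo the substitution. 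Since the paper labels the final item $(v)$ (skipping $(iv)$), I would present the proof as the two-way implications $(i)\Leftrightarrow(iii)$, $(ii)\Leftrightarrow(iii)$, $(iii)\Leftrightarrow(v)$ and remark that these together give the full equivalence.
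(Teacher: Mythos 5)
The paper states this proposition without proof (it is introduced by ``By using the $\phi_{A}$-invariances\dots'' and followed immediately by the hierarchy theorem), so there is no argument of the author's to compare against; judging your proposal on its own terms, it contains a genuine error at its core. You claim that $[\cdot,\cdot]_{N,\pi}$ ``coincides with'' $[\cdot,\cdot]_{N\pi^{\sharp}}$ (equivalently with $[\cdot,\cdot]_{\pi_{N}}$, since $[\cdot,\cdot]_{\pi_{N}}=[\cdot,\cdot]_{N\pi^{\sharp}}$ \emph{is} a definitional identity once $\pi_{N}^{\sharp}:=N\circ\pi^{\sharp}$). This is false: comparing $\mathcal{L}^{N}_{X}\alpha$ with $\mathcal{L}^{A}_{NX}\alpha$ via the paper's formula $\langle\mathcal{L}^{A}_{X}\alpha,Y\rangle=a_{A}(\phi_{A}(X))\langle\alpha,\phi_{A}^{-1}(Y)\rangle-\langle\phi_{A}^{\dagger}(\alpha),[X,\phi_{A}^{-1}(Y)]_{A}\rangle$ and the $\phi_{A}$-invariance of $N$ gives $\mathcal{L}^{N}_{X}\alpha=\mathcal{L}^{A}_{NX}\alpha-(\mathcal{L}^{A}_{X}N)^{*}\phi_{A}^{\dagger}(\alpha)$, and together with $d_{N}f=N^{*}d_{A}f$ and $N\pi^{\sharp}=\pi^{\sharp}N^{*}$ this yields the \emph{identity} $[\alpha,\beta]_{\pi_{N}}-[\alpha,\beta]_{N,\pi}=C'_{(\pi,N)}(\alpha,\beta)$. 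In other words, the difference you declare to be zero is exactly $C'$, and the equivalence (ii)$\Leftrightarrow$(v) is the statement that this difference vanishes. Were your claimed identity true, condition (ii) would be vacuous and could not be equivalent to the nontrivial condition (i). The same circularity infects your link (ii)$\Leftrightarrow$(iii): the ``bookkeeping'' difference between $[\cdot,\cdot]_{\pi_{N}}$ and $[\cdot,\cdot]_{\pi}^{N^{*}}$ is, by definition, $C_{\pi}^{N}$ --- that is condition (i) itself, not something that cancels.

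What is actually needed, and what your outline never addresses, is the Magri--Morosi-type identity expressing $C_{\pi}^{N}=[\cdot,\cdot]_{N\pi^{\sharp}}-[\cdot,\cdot]_{\pi}^{N^{*}}$ as a fixed nonzero multiple of $C'_{(\pi,N)}$ (with the appropriate $\phi_{A}^{\dagger}$-twists); this is the computational heart that brings condition (i) into the chain. A sound structure is: prove the two identities $[\cdot,\cdot]_{\pi_{N}}-[\cdot,\cdot]_{N,\pi}=C'$ and $[\cdot,\cdot]_{\pi_{N}}-[\cdot,\cdot]_{\pi}^{N^{*}}=\lambda C'$ with $\lambda\neq 0,1$ an explicit constant, from which all four conditions are simultaneously equivalent to $C'=0$ (note $[\cdot,\cdot]_{N,\pi}-[\cdot,\cdot]_{\pi}^{N^{*}}=(\lambda-1)C'$ must also be a nonzero multiple, otherwise (iii) degenerates). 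Your computation sketched for (iii)$\Leftrightarrow$(v) is one of the pieces needed, and your preliminary observations about the commutation relations forced by $\phi_{A}$-invariance are correct and necessary; but two further problems remain. First, you invoke the vanishing of the Nijenhuis torsion $\mathcal{T}_{N}$, which is not a hypothesis here --- the proposition assumes only that $\pi$ and $N$ are $\phi_{A}$-invariant with $N\pi^{\sharp}=\pi^{\sharp}N^{*}$, precisely so that it can be applied in the hierarchy theorem before the $\pi_{k}$ are known to be Poisson. Second, the appeal to $d_{N}f=N^{*}d_{A}f$ is legitimate (its proof uses nothing about $\mathcal{T}_{N}$), but you should say so rather than cite a statement whose hypotheses are not met.
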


%\begin{proof}
% 証明
% \end{proof} 
 
The following theorem is the generalization of the {\it hierarchy of Poisson-Nijenhuis structures} \cite{MM2}, \cite{V}:

\begin{theorem}
 Let $(A, \varphi,\phi_{A},[\cdot,\cdot]_{A},a_{A})$ be a Hom-Lie algebroid over $M$, $(\pi ,N)$ a Hom-Poisson-Nijenhuis structure on $A$. We set $\pi _0:=\pi $ and define a $2$-section $\pi _{k+1}$ by the condition $\pi _{k+1}^\sharp  =N\circ \pi _k^\sharp $ inductively. Then all pairs $(\pi _k,N^p)$ ($k,p\geq 0$) are Hom-Poisson-Nijenhuis structures on $A$. Furthermore for any $k,l\geq 0$, it follows that $[\pi _k,\pi _l]_{A}=0$. The set of Hom-Poisson-Nijenhuis structures $\{(\pi _k,N^p)\}$ is called the {\it hierarchy of Hom-Poisson-Nijenhuis structures} of $(\pi ,N)$.
 \end{theorem}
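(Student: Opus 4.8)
The plan is to imitate the classical argument for the hierarchy of Poisson--Nijenhuis structures (see \cite{MM2}, \cite{V}), being careful to track the $\phi_A$-invariance at each step, since that is the genuinely new ingredient here. The argument splits into three parts: (1) show each $\pi_k$ is a Hom-Poisson structure and each $N^p$ is a Hom-Nijenhuis structure; (2) show $\pi_k$ and $N^p$ are compatible for all $k,p\ge 0$; (3) show $[\pi_k,\pi_l]_A=0$ for all $k,l\ge 0$. First I would verify that $\pi_k$ is well-defined and $\phi_A$-invariant: inductively, $\pi_{k+1}^\sharp=N\circ\pi_k^\sharp$, and since $N\circ\phi_A=\phi_A\circ N$ (Lemma~\ref{phi-N-lemma}(v)) and $\phi_A\circ\pi_k^\sharp=\pi_k^\sharp\circ\phi_A^\dagger$ (Proposition~\ref{pi invariance and commutativity}), we get $\phi_A\circ\pi_{k+1}^\sharp=\phi_A\circ N\circ\pi_k^\sharp=N\circ\phi_A\circ\pi_k^\sharp=N\circ\pi_k^\sharp\circ\phi_A^\dagger=\pi_{k+1}^\sharp\circ\phi_A^\dagger$, so $\pi_{k+1}$ is $\phi_A$-invariant again by Proposition~\ref{pi invariance and commutativity}. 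Skew-symmetry of $\pi_{k+1}$ follows from the compatibility relation $N\circ\pi_k^\sharp=\pi_k^\sharp\circ N^*$ (inherited down the hierarchy), which makes $(N\circ\pi_k^\sharp)^*=-(N\circ\pi_k^\sharp)$. That $\phi_A(N^p)=N^p$ is immediate from Lemma~\ref{phi-N-lemma}(iv).

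Next I would prove the compatibility of $(\pi_k,N^p)$. The condition $\pi_k^\sharp\circ(N^p)^*=N^p\circ\pi_k^\sharp$ is purely algebraic: from $N\circ\pi_0^\sharp=\pi_0^\sharp\circ N^*$ and the recursion one checks by induction on $k$ that $\pi_k^\sharp=N^k\circ\pi_0^\sharp=\pi_0^\sharp\circ(N^*)^k$, hence $N^p$ and $\pi_k^\sharp$ commute in the required sense and all the $\pi_k^\sharp$ commute among themselves. The vanishing of $C_{\pi_k}^{N^p}$ is the substantive point. Here I would use the equivalent formulation in Proposition~\ref{P-N structure equivalent condition}: it suffices to show $C'_{(\pi_k,N^p)}=0$, i.e.\ that
\begin{align*}
(\mathcal{L}_{\pi_k^\sharp\alpha}^{A}N^p)^*\phi_A^\dagger(\beta)-(\mathcal{L}_{\pi_k^\sharp\beta}^{A}N^p)^*\phi_A^\dagger(\alpha)+(N^p)^*d_A\langle\pi_k^\sharp\alpha,\beta\rangle-d_A\langle\pi_k^\sharp(N^p)^*\alpha,\beta\rangle=0.
\end{align*}
Using the Leibniz rule $\mathcal{L}_X^A(N\circ N')=\mathcal{L}_X^AN\circ\phi_A(N')+\phi_A(N)\circ\mathcal{L}_X^AN'$ together with $\phi_A(N)=N$, one reduces $\mathcal{L}_X^A N^p$ to a telescoping sum of terms $N^i\circ(\mathcal{L}_X^A N)\circ N^{p-1-i}$; substituting $X=\pi_k^\sharp\alpha=N^k\pi^\sharp\alpha$ and repeatedly feeding the commutation relations $N\circ\pi^\sharp=\pi^\sharp\circ N^*$ back in, the whole expression collapses to a sum of copies of $C'_{(\pi,N)}$ (which vanishes by hypothesis since $(\pi,N)$ is Hom-Poisson--Nijenhuis, via Proposition~\ref{P-N structure equivalent condition}) together with terms that cancel using $\mathcal{T}_N=0$ expressed on $A^*$. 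This bookkeeping is the main obstacle: it is the Hom-analogue of the classical identity $C_{\pi_k}^{N^p}=\text{(polynomial in }N,N^*\text{)}\cdot C_\pi^N$, and one must be scrupulous that every $\phi_A$ and $\phi_A^\dagger$ lands where the definitions of $d_A$, $\iota^A$, $\mathcal{L}^A$ put it.

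Finally, for $[\pi_k,\pi_l]_A=0$: by Proposition~\ref{d pi property}, since each $\pi_m$ is $\phi_A$-invariant, $[\pi_k,\pi_l]_A=d_{\pi_k}\pi_l$, and by Proposition~\ref{pi pi calculus} applied with the roles suitably arranged, the vanishing of $[\pi_k,\pi_l]_A$ is equivalent to the identity $[\pi_k^\sharp\alpha,\pi_l^\sharp\beta]_A=\pi_k^\sharp[\alpha,\beta]_{\pi_l}^{\text{(mixed)}}$ on sections of $A^*$. The base case $k=l=0$ is $[\pi,\pi]_A=0$, part of the Hom-Poisson hypothesis. For the inductive step I would write $\pi_{k+1}^\sharp=N\circ\pi_k^\sharp$, expand $[\pi_{k+1}^\sharp\alpha,\pi_l^\sharp\beta]_A=[N\pi_k^\sharp\alpha,\pi_l^\sharp\beta]_A$, and use the compatibility $C_{\pi_k}^{N}=0$ just established (in the form $[\cdot,\cdot]_{N,\pi_k}=[\cdot,\cdot]_{\pi_k}^{N^*}$ from Proposition~\ref{P-N structure equivalent condition}) to trade the Nijenhuis operator against the bracket, reducing $[\pi_{k+1},\pi_l]_A$ to expressions in $[\pi_k,\pi_l]_A$ and $[\pi_k,\pi_{l-1}]_A$, which vanish by induction. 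Running the induction on $k+l$ closes the argument. Throughout, the only place real care is needed is the second part; parts (1) and (3) are formal once the commutation lemmas and Propositions~\ref{d pi property}, \ref{pi pi calculus}, and \ref{P-N structure equivalent condition} are in hand.
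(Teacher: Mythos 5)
Your proposal is correct and follows essentially the same route as the paper: the paper's own proof is a one-line appeal to Proposition 3.14 (the equivalent characterizations of compatibility) together with ``analogical calculations of Theorem 1.3 in [V]'', and your outline is a faithful --- indeed considerably more explicit --- expansion of exactly that argument, tracking the $\phi_A$-invariance through the classical Vaisman-style induction.
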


\begin{proof}
By Proposition \ref{P-N structure equivalent condition} and analogical calculations of Theorem $1.3$ in \cite{V}, this theorem holds.
 \end{proof}

The following result is the generalization of the result for Poisson-Nijenhuis structures by Kosmann-Schwarzbach \cite{K3}:

\begin{theorem}
 Let $(A, \varphi,\phi_{A},[\cdot,\cdot]_{A},a_{A})$ be a Hom-Lie algebroid over $M$, $\pi $ a Hom-Poisson structure on $A$ and $N$ a Hom-Nijenhuis structure on $M$. We assume that the set $\{d_{A}f\ |\ f\in C^{\infty}(M)\}$ generates $\Gamma (A^{*})$ as a $C^{\infty }(M)$-module. Then the following three conditions are equivalent:
 \begin{enumerate}
\item[(i)]$(\pi ,N)$ is a Hom-Poisson-Nijenhuis structure; 
\item[(ii)]$(A_{N},A_{\pi}^{*})$ is a Hom-Lie bialgebroid;
\item[(iii)]$(A_{\pi}^{*},A_{N})$ is a Hom-Lie bialgebroid.
\end{enumerate}
 \end{theorem}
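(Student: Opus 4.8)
The plan is to prove the equivalence of the three conditions by reducing to the analogous classical statement for Poisson--Nijenhuis structures on Lie algebroids \cite{K3}, while carefully tracking how the $\phi_A$-invariance conditions intervene. First I would unwind what each of the three conditions means concretely. Condition (i), that $(\pi,N)$ is a Hom-Poisson--Nijenhuis structure, amounts to: $\pi$ is $\phi_A$-invariant with $[\pi,\pi]_A=0$; $N$ is $\phi_A$-invariant with $\mathcal T_N=0$; the compatibility $N\circ\pi^\sharp=\pi^\sharp\circ N^*$; and $C_\pi^N=0$. Condition (ii) says that $A_N=(A,\varphi,\phi_A,[\cdot,\cdot]_N,a_A\circ N)$ and $A_\pi^*=(A^*,\varphi,\phi_A^\dagger,[\cdot,\cdot]_\pi,\pi^\sharp)$ are two Hom-Lie algebroids in duality whose pair is a Hom-Lie bialgebroid, i.e. $d_\pi[X,Y]_N=[d_\pi X,\phi_A(Y)]_N+[\phi_A(X),d_\pi Y]_N$ for all $X,Y\in\Gamma(A)$; here I would first need to check that $A_N$ and $A_\pi^*$ are genuinely \emph{in duality} in the required sense, which uses Lemma \ref{phi-N-lemma} and Proposition \ref{pi invariance and commutativity} (the $\phi_A$-invariance of $N$ guarantees $N\circ\phi_A=\phi_A\circ N$, and of $\pi$ guarantees $\phi_A\circ\pi^\sharp=\pi^\sharp\circ\phi_A^\dagger$). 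By Proposition \ref{Hom-Lie bilagebroid gyaku ok}, (ii) and (iii) are equivalent, so the real content is (i) $\Leftrightarrow$ (ii).

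Next I would set up the bialgebroid condition in a computable form. Using Proposition \ref{d pi property}, for the Hom-Lie algebroid $A_\pi^*$ the differential satisfies $d_\pi D=[\pi,D]_A$ on $\Gamma(\Lambda^*A)$ (since $\pi$ is $\phi_A$-invariant), so $d_\pi X=[\pi,X]_A=-\mathcal L^A_{\text{(\,\cdot\,)}}X$-type expressions; more usefully $d_\pi X=-\pi^\sharp\text{-valued}$, concretely $\langle d_\pi X,\alpha\wedge\beta\rangle$ can be expanded via (\ref{A-gaibibun}) for $\pi$. The bialgebroid identity $d_\pi[X,Y]_N-[d_\pi X,\phi_A(Y)]_N-[\phi_A(X),d_\pi Y]_N=0$ then becomes a $2$-form identity on $\Gamma(A^*)$; pairing it against $\phi_A^\dagger(\alpha)\wedge\phi_A^\dagger(\beta)$ and using the Hom-Cartan formula (\ref{A-Lie derivative}) together with Proposition \ref{pi pi calculus}, I expect the left-hand side to reorganize exactly into the expression $C'_{(\pi,N)}(\alpha,\beta)$ (or a $\pi^\sharp$-image of $C_\pi^N(\alpha,\beta)$) appearing in Proposition \ref{P-N structure equivalent condition}. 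That proposition already tells us $C_\pi^N=0\iff C'_{(\pi,N)}=0$ under the standing $\phi_A$-invariance and compatibility hypotheses, so once the bialgebroid condition is massaged into the vanishing of $C'_{(\pi,N)}$, the equivalence (i) $\Leftrightarrow$ (ii) follows. The hypothesis that $\{d_Af\}$ generates $\Gamma(A^*)$ is used exactly as in the classical proof: it lets one verify the $2$-form identity on the generating set $\{d_Af\wedge d_Ag\}$ rather than on all of $\Lambda^2\Gamma(A^*)$, and it is also what makes $(A,A_\pi^*)$ (hence, via the Nijenhuis deformation, $(A_N,A_\pi^*)$) a Hom-Lie bialgebroid in the first place.

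Concretely the order of steps would be: (1) verify $A_N$ and $A_\pi^*$ are Hom-Lie algebroids in duality, invoking the two earlier theorems on $A_N$ and $A_\pi^*$ and checking the duality/$\phi_A$-compatibility using Lemma \ref{phi-N-lemma}(iii),(v); (2) using Proposition \ref{d pi property}, rewrite $d_\pi$ as the bracket with $\pi$ and expand the bialgebroid compatibility of $(A_N,A_\pi^*)$ into a concrete tensorial identity; (3) apply Proposition \ref{pi pi calculus} (which encodes $[\pi,\pi]_A=0$) and the Hom-Cartan formula to identify this identity with $C'_{(\pi,N)}=0$; (4) conclude via Proposition \ref{P-N structure equivalent condition} that this is equivalent to $C_\pi^N=0$, i.e. to (i); (5) invoke Proposition \ref{Hom-Lie bilagebroid gyaku ok} for (ii) $\Leftrightarrow$ (iii). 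The main obstacle will be step (3): matching, term by term, the expansion of the bialgebroid identity with the expression $C'_{(\pi,N)}$, since this requires repeatedly commuting $\phi_A$, $\phi_A^\dagger$, $\phi_A^{-1}$ past brackets, Lie derivatives and $N$, $N^*$, and every such commutation is legitimate \emph{only} because of the $\phi_A$-invariance of $\pi$ and $N$ (Lemma \ref{phi-N-lemma}(v), Proposition \ref{pi invariance and commutativity}). This is precisely the point where the ``Hom-'' theory departs from the classical one, so I would expect this to be the bulk of the work; the rest of the argument parallels \cite{K3} essentially verbatim, with $d_A,\mathcal L^A$ and the extra $\phi_A$-twists inserted in the obvious places.
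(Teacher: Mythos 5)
Your overall strategy is the right one at the level of ideas: like the paper, you dispose of (ii)$\Leftrightarrow$(iii) by Proposition~\ref{Hom-Lie bilagebroid gyaku ok} and then run a Hom-version of Kosmann-Schwarzbach's argument. But the route you take differs from the paper's in a way that leaves one concrete hole. The paper proves (i)$\Leftrightarrow$(iii), not (i)$\Leftrightarrow$(ii): it introduces the bidifferential defect operator $A_{N,\pi }(\xi_1,\xi_2)=d_N[\xi _1 , \xi_2 ]_\pi -[d_N\xi _1, \xi _2 ]_\pi -(-1)^{\mathrm{deg}\xi _1+1}[\xi _1, d_N\xi_2 ]_\pi$ on $\Gamma(\Lambda^{*}A^{*})$, establishes a twisted graded derivation rule and graded skew-symmetry for it, and evaluates it on the generators: $A_{N,\pi }(f,g)=\langle d_{A}(\varphi^{*}f),(N\pi ^\sharp -\pi ^\sharp N^*)(d_{A}(\varphi^{*}g))\rangle$ and $A_{N,\pi }(d_{A}f,g)=C_\pi^N(d_{A}(\varphi^{*}f),d_{A}g)$. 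The generation hypothesis then converts vanishing on $\{f\}\cup\{d_A f\}$ into vanishing of $A_{N,\pi}$ everywhere, and conversely converts the bialgebroid identity into exactly the two conditions constituting (i). Your plan instead expands the condition-(ii) identity $d_\pi[X,Y]_N=[d_\pi X,\phi_A(Y)]_N+[\phi_A(X),d_\pi Y]_N$ on $\Gamma(A)$ and funnels it through Proposition~\ref{P-N structure equivalent condition}. Note that pairing a $\Gamma(\Lambda^2A)$-valued identity against $\phi_A^\dagger(\alpha)\wedge\phi_A^\dagger(\beta)$ is $C^\infty(M)$-linear in $\alpha,\beta$, so the generation hypothesis does no work at the step where you invoke it; the place it is genuinely needed is the one your sketch omits.

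That omission is the gap, and it sits in the direction (ii)$\Rightarrow$(i). Condition (i) consists of two separate requirements, $N\circ\pi^\sharp=\pi^\sharp\circ N^*$ and $C_\pi^N=0$, and Proposition~\ref{P-N structure equivalent condition} --- your tool in step (4) --- carries $\pi^{\sharp}\circ N^{*}=N\circ \pi ^{\sharp}$ as a standing hypothesis. So before you may apply it you must first \emph{extract} that compatibility from the bialgebroid condition; your outline never does this, so step (4) is circular for this implication and in any case only delivers $C_\pi^N=0$, which is half of (i). In the paper this missing piece is precisely the degree-zero evaluation $A_{N,\pi}(f,g)$ quoted above together with the hypothesis that $\{d_{A}f\}$ generates $\Gamma(A^{*})$, which makes its vanishing for all $f,g$ equivalent to $N\pi^\sharp=\pi^\sharp N^*$. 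If you insist on working on the condition-(ii) side you need an analogous mechanism (for instance testing the identity on $fY$ against $\varphi^{*}f$ times the identity on $Y$ to isolate the tensorial part $N\pi^\sharp-\pi^\sharp N^*$) before Proposition~\ref{P-N structure equivalent condition} becomes available. The remainder of your outline --- duality of $A_N$ and $A_\pi^*$ via Lemma~\ref{phi-N-lemma} and Proposition~\ref{pi invariance and commutativity}, the use of Propositions~\ref{d pi property} and~\ref{pi pi calculus}, and the observation that every commutation of $\phi_A$ past $N$, $N^{*}$, $\pi^\sharp$ rests on $\phi_A$-invariance --- is sound and consistent with the paper.
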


\begin{proof}
By Proposition \ref{Hom-Lie bilagebroid gyaku ok}, the equivalence of (ii) and (iii) holds. We shall prove the equivalent of (i) and (iii). 
We set for any $\xi_1$ and $\xi_2$ in $\Gamma (\Lambda ^{*}\varphi^{!}T^{*}M)$,
\begin{eqnarray}\label{A_N}
A_{N,\pi }(\xi_1,\xi_2):=d_N[\xi _1 , \xi_2 ]_\pi -[d_N\xi _1, \xi _2 ]_\pi -(-1)^{\mathrm{deg}\xi _1+1}[\xi _1, d_N\xi_2 ]_\pi.
\end{eqnarray}
Then by straightforward calculations, for any $f,g$ in $C^\infty (M)$, $\alpha ,\beta $ and $\gamma $ in $\Gamma (\Lambda ^{*}\varphi^{!}T^{*}M)$, we obtain
\begin{align}
A_{N,\pi }(f,g)&=\langle d_{A}(\varphi^{*}f),(N\pi ^\sharp -\pi ^\sharp N^*)(d_{A}(\varphi^{*}g))\rangle ,\\
A_{N,\pi }(d_{A}f,g)&=C_N^\pi(d_{A}(\varphi^{*}f),d_{A}g),\\
A_{N,\pi }(d_{A}f,d_{A}g)&=-d_{A}(C_N^\pi(d_{A}f,d_{A}g)),\\
A_{N,\pi }(\alpha ,\beta \wedge \gamma )&=A_{N,\pi }(\alpha ,\beta) \wedge (\mbox{Ad}_{\varphi^{*}}^{\dagger})^{2}(\gamma )\nonumber \\
&\qquad +(-1)^{\mbox{deg}\alpha\mbox{deg}\beta }(\mbox{Ad}_{\varphi^{*}}^{\dagger})^{2}(\beta )\wedge A_{N,\pi }(\alpha ,\gamma ),\label{ANpi wedge 越え}\\
A_{N,\pi }(\alpha ,\beta )&=-(-1)^{(\mbox{deg}\alpha -1)(\mbox{deg}\beta  -1)}A_{N,\pi }(\beta ,\alpha ),
\end{align}
so that the conclusion follows from these equations and the assumption that the set $\{d_{A}f\ |\ f\in C^{\infty}(M)\}$ generates $\Gamma (A^{*})$ as a $C^{\infty }(M)$-module. 
 \end{proof}
 
 \begin{rem}
 Considering ``Hom-versions'' of several generalizations of Poisson-Nijenhuis structures, for example, Poisson quasi-Nijenhuis structures \cite{SX}, pseudo-Poisson Nijenhuis structures \cite{N2}, and so on, is very interesting.
 \end{rem}

\section{Hom-Dirac structures on Hom-Courant algebroids}\label{Hom-Dirac structures on Hom-Courant algebroids}

In this section, we introduce the notion of Hom-Dirac structures on a Hom-Courant algebroid and prove the relation between Hom-Dirac structures and Hom-Lie algebroids. We also prove a necessary and sufficient condition for the graph of a bundle map from $A^{*}$ into $A$ to be a Hom-Dirac structure on the Hom-Courant algebroid $A\oplus A^{*}$, where $(A,A^{*})$ is a Hom-Lie bialgebroid. These are the generalizations of properties of Dirac structures on a Courant algebroid and also main results in this paper.

\begin{defn}
Let $(E,\varphi, \phi_{E},\langle \! \langle\cdot,\cdot\rangle \! \rangle, \odot_{E},\rho_{E})$ be a Hom-Courant algebroid over $M$ and $[\![\cdot,\cdot ]\!]_{E}$ the Hom-Courant bracket on $\Gamma (E)$. A subbundle $L$ of $E$ is {\it isotropic} if it is isotropic under the pairing $\langle \! \langle \cdot ,\cdot \rangle \! \rangle $. A subbundle $L$ is {\it integrable} if $\Gamma (L)$ is closed under the bracket $[\![\cdot ,\cdot ]\!]_{E}$. A subbundle $L$ is {\it $\phi_{E}$-invariant} if it satisfies that $\phi_{E}(\Gamma (L))\subset \Gamma (L)$. A subbundle $L$ is a {\it Hom-Dirac structure} or a {\it Hom-Dirac subbundle} if it is maximally isotropic, integrable and $\phi_{E}$-invariant.
\end{defn}

The relationship between Dirac structures and Lie algebroids \cite{LWX} is generalized that between Hom-Dirac structures and Hom-Lie algebroids.

\begin{prop}
Let $(E,\varphi, \phi_{E},\langle \! \langle\cdot,\cdot\rangle \! \rangle, \odot_{E},\rho_{E})$ be a Hom-Courant algebroid over $M$, $[\![\cdot,\cdot ]\!]_{E}$ the Hom-Courant bracket on $\Gamma (E)$ and a subbundle $L$ a Hom-Dirac structure. Then $(L,\varphi, \phi_{E}|_{L},[\![\cdot ,\cdot ]\!]|_L,$ $\rho |_L)$ is a Hom-Lie algebroid over $M$.
\end{prop}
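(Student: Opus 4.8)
The plan is to verify, item by item, the defining conditions of Definition~\ref{Hom-Lie algebroid-def} for the tuple $(L,\varphi,\phi_{E}|_{L},[\![\cdot,\cdot]\!]_{E}|_{L},\rho_{E}|_{L})$, the organizing idea being that each of the three Hom-Dirac conditions plays a distinct role: $\phi_{E}$-invariance ensures $\phi_{E}$ restricts to a map $\Gamma(L)\to\Gamma(L)$, integrability ensures $[\![\cdot,\cdot]\!]_{E}$ restricts to $\Gamma(L)$, and isotropy kills the obstruction terms built from $\langle\!\langle\cdot,\cdot\rangle\!\rangle$ that occur in the Hom-Courant identities. As a first step one checks that $(L,\varphi,\phi_{E}|_{L})$ is an invertible Hom-bundle: the inclusion $\phi_{E}(\Gamma(L))\subset\Gamma(L)$ is the $\phi_{E}$-invariance, the identity $\phi_{E}(fX)=\varphi^{*}f\cdot\phi_{E}(X)$ is inherited from $E$, and (passing to the underlying fibrewise-invertible bundle map) $\phi_{E}^{-1}$ also preserves $\Gamma(L)$; moreover $\rho_{E}|_{L}\colon L\to\varphi^{!}TM$ is a bundle map by restriction.

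Next I would establish that $(\Gamma(L),\phi_{E}|_{L},[\![\cdot,\cdot]\!]_{E}|_{L})$ is a Hom-Lie algebra. Closedness of $\Gamma(L)$ under $[\![\cdot,\cdot]\!]_{E}$ is exactly integrability; skew-symmetry is immediate from $[\![e_{1},e_{2}]\!]_{E}=\tfrac12(e_{1}\odot_{E}e_{2}-e_{2}\odot_{E}e_{1})$; and $\phi_{E}|_{L}$ is an algebra homomorphism since condition (i)(a) of Definition~\ref{Hom-Courant algebroid def} gives $\phi_{E}[\![e_{1},e_{2}]\!]_{E}=[\![\phi_{E}(e_{1}),\phi_{E}(e_{2})]\!]_{E}$. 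The crux is the Hom-Jacobi identity. Here I would invoke Proposition~\ref{Jacobi-modoki}: for $e_{1},e_{2},e_{3}\in\Gamma(L)$ one has $[\![e_{1},e_{2}]\!]_{E}\in\Gamma(L)$ by integrability and $\phi_{E}(e_{3})\in\Gamma(L)$ by $\phi_{E}$-invariance, so $\langle\!\langle[\![e_{1},e_{2}]\!]_{E},\phi_{E}(e_{3})\rangle\!\rangle=0$ by isotropy; hence $T(e_{1},e_{2},e_{3})=0$, and therefore
\[
\sum_{\mathrm{Cycl}(e_{1},e_{2},e_{3})}[\![[\![e_{1},e_{2}]\!]_{E},\phi_{E}(e_{3})]\!]_{E}=\mathcal{D}\,T(e_{1},e_{2},e_{3})=0 .
\]
Rearranging this cyclic sum by the already-established skew-symmetry yields exactly the Hom-Jacobi identity for $[\![\cdot,\cdot]\!]_{E}|_{L}$ and $\phi_{E}|_{L}$.

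It then remains to check the anchor axioms (i) and (ii) of Definition~\ref{Hom-Lie algebroid-def}. For the Leibniz rule, combining Proposition~\ref{function multiplication for odot} with the symmetry of $\langle\!\langle\cdot,\cdot\rangle\!\rangle$ and the definition of $[\![\cdot,\cdot]\!]_{E}$ gives, for all $X,Y\in\Gamma(E)$ and $f\in C^{\infty}(M)$,
\[
[\![X,fY]\!]_{E}=\varphi^{*}f\,[\![X,Y]\!]_{E}+\rho_{E}(\phi_{E}(X))(f)\,\phi_{E}(Y)-\tfrac12\mathcal{D}f\cdot\varphi^{*}\langle\!\langle X,Y\rangle\!\rangle ;
\]
for $X,Y\in\Gamma(L)$ the last term vanishes by isotropy, which is precisely condition (i) for the anchor $\rho_{E}|_{L}$. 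Condition (ii)(a) is the restriction of $\rho_{E}\circ\phi_{E}=\mathrm{Ad}_{\varphi^{*}}\circ\rho_{E}$ (condition (ii) of Definition~\ref{Hom-Courant algebroid def}) rewritten as in Remark~\ref{def iikae}, and condition (ii)(b) follows from condition (iii) of Definition~\ref{Hom-Courant algebroid def}: antisymmetrizing $\rho_{E}(e_{1}\odot_{E}e_{2})=[\rho_{E}(e_{1}),\rho_{E}(e_{2})]_{\varphi^{*}}$ gives $\rho_{E}([\![X,Y]\!]_{E})=[\rho_{E}(X),\rho_{E}(Y)]_{\varphi^{*}}$, which is the form of (ii)(b) recorded in Remark~\ref{def iikae}. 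Putting all of this together shows that $(L,\varphi,\phi_{E}|_{L},[\![\cdot,\cdot]\!]_{E}|_{L},\rho_{E}|_{L})$ is a Hom-Lie algebroid.

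I expect the Hom-Jacobi identity to be the real obstacle: in contrast with the ordinary Courant case, the Hom-Courant bracket on $\Gamma(E)$ does \emph{not} satisfy the Hom-Jacobi identity (only the defect formula of Proposition~\ref{Jacobi-modoki}), so all three Hom-Dirac conditions must be used simultaneously to force the defect term $\mathcal{D}T$ to vanish. Note that only isotropy, and not maximality, is actually needed anywhere in the argument. A minor technical point worth stating carefully is that the restricted Hom-bundle $(L,\varphi,\phi_{E}|_{L})$ is again invertible, i.e.\ that $\phi_{E}^{-1}$ preserves $\Gamma(L)$, which one sees by passing to the underlying fibrewise-invertible bundle map.
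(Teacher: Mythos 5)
Your proposal is correct and follows essentially the same route as the paper: restrict the Hom-bundle structure using $\phi_{E}$-invariance, derive the Hom-Jacobi identity from Proposition~\ref{Jacobi-modoki} by using isotropy, integrability and $\phi_{E}$-invariance to force $T(e_{1},e_{2},e_{3})=0$, obtain the Leibniz rule from Proposition~\ref{function multiplication for odot} with the $\mathcal{D}f$-term killed by isotropy, and read off the anchor axioms from conditions (ii) and (iii) of Definition~\ref{Hom-Courant algebroid def}. Your added remarks (that maximality is not actually used, and that invertibility of $\phi_{E}|_{L}$ deserves a word) are accurate refinements of the same argument.
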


\begin{proof}
 Let $e_{1},e_{2}$ and $e_{3}$ be elements in $\Gamma (L)$ and $f$ a smooth function on $M$. First of all, we obtain $\phi_{E}(fe_{1})=\varphi^{*}f\cdot\phi_{E}(e_{1})$. Since $L$ is an isotropic, integrable and $\phi_{E}$-invariant subbundle in $E$, it follows that $[\![e_{1},e_{2}]\!]_{E}$ and $\phi_{E}(e_{3})$ are in $\Gamma (L)$ and that $\langle \! \langle [\![e_{1},e_{2}]\!]_{E}, \phi_{E}(e_{3}) \rangle \! \rangle=0$. Therefore by Proposition \ref{Jacobi-modoki}, we obtain the Hom-Jacobi identity for $[\![\cdot,\cdot ]\!]_{E}$. That $(\Gamma (E), \odot _{E}, \phi_{E})$ is Hom-Leibniz algebra means that $\phi_{E}$ is an algebra homomorphism with respect to $[\![\cdot,\cdot ]\!]_{E}$. Then, we calculate
 \begin{align*}
2[\![e_{1},fe_{2}]\!]_{E}&=(\varphi^{*}f\cdot e_{1}\odot _Ee_{2}+\rho_{E}(\phi_{E}(e_{1}))(f)\phi_{E}(e_{2}))\\
                         &\quad -(\varphi^{*}f\cdot e_{2}\odot _Ee_{1}-\rho_{E}(\phi_{E}(e_{1}))(f)\phi_{E}(e_{2})+\mathcal{D}f\cdot \varphi^{*}\langle \! \langle e_{1},e_{2}\rangle \! \rangle)\\
                         &=2(\varphi^{*}f\cdot [\![e_{1},fe_{2}]\!]_{E}+\rho_{E}(\phi_{E}(e_{1}))(f)\phi_{E}(e_{2}))
\end{align*} 
by Proposition \ref{function multiplication for odot}. From (ii) and (iii) in Definition \ref{Hom-Courant algebroid def}, we show that the condition (ii)' in Remark \ref{def iikae} is satisfied. From the above, $(L,\varphi, \phi_{E}|_{L},[\![\cdot ,\cdot ]\!]|_L,$ $\rho |_L)$ is a Hom-Lie algebroid.
 \end{proof}
 
 Let $(A,A^{*})$ be a Hom-Lie bialgebroid over $M$ and $H:A^{*}\longrightarrow A$ a bundle map over $M$. We set
 \begin{align*}
\mathrm{graph}(H):=\{H\xi+\xi\ |\ \xi\in A^{*}\}\subset A\oplus A^{*}.
\end{align*}

\begin{theorem}
 With the above notations, the graph $\mathrm{graph}(H)$ of a bundle map $H$ % satisfying $\phi_{A}\circ H=H\circ \phi_{A}^{\dagger}$
is a Hom-Dirac structure on $A\oplus A^{*}$ equipped with the Hom-Courant algebroid structure in Theorem \ref{Hom-Lie bi to Hom-Courant} if and only if there exists a $\phi_{A}$-invariant $2$-section $\pi $ in $\Gamma (\Lambda ^{2}A)$ such that $H=\pi ^{\sharp}$ and that the {\it Marer-Cartan type equation} holds: 
 \begin{align}\label{Marer-Cartan type equation}
d_{A^{*}}\pi +\frac{1}{2}[\pi ,\pi]_{A}=0.
\end{align}
 \end{theorem}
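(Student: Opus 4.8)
The plan is to check the three defining conditions of a Hom-Dirac structure --- maximal isotropy, $\phi_{E}$-invariance (here $E=A\oplus A^{*}$), and integrability --- one at a time, converting each into an algebraic condition on $H$. The first two are purely tensorial and easy. With the pairing of Theorem \ref{Hom-Lie bi to Hom-Courant} one has $\langle\!\langle H\xi+\xi,H\eta+\eta\rangle\!\rangle=\tfrac12(\langle\xi,H\eta\rangle+\langle\eta,H\xi\rangle)$, so $\mathrm{graph}(H)$ is isotropic iff $H$ is skew-symmetric as a bundle map $A^{*}\to A$, and skew-symmetric such maps correspond bijectively to $2$-sections $\pi\in\Gamma(\Lambda^{2}A)$ via $H=\pi^{\sharp}$. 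Since $\mathrm{rank}\,\mathrm{graph}(H)=\mathrm{rank}\,A$ while the pairing on $A\oplus A^{*}$ is nondegenerate and split of rank $2\,\mathrm{rank}\,A$, any isotropic subbundle of rank $\mathrm{rank}\,A$ is automatically maximal, so maximal isotropy is equivalent to $H=\pi^{\sharp}$. Next $\phi_{E}(H\xi+\xi)=\phi_{A}(H\xi)+\phi_{A}^{\dagger}(\xi)$, and since $\phi_{A}^{\dagger}$ is invertible on $\Gamma(A^{*})$ this lies in $\Gamma(\mathrm{graph}(H))$ for all $\xi$ iff $\phi_{A}\circ\pi^{\sharp}=\pi^{\sharp}\circ\phi_{A}^{\dagger}$, which by Proposition \ref{pi invariance and commutativity} is precisely the $\phi_{A}$-invariance $\phi_{A}(\pi)=\pi$. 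Hence it remains to prove: for a $\phi_{A}$-invariant $\pi\in\Gamma(\Lambda^{2}A)$, $\mathrm{graph}(\pi^{\sharp})$ is integrable if and only if the Maurer--Cartan type equation (\ref{Marer-Cartan type equation}) holds.

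For integrability I would compute $[\![\pi^{\sharp}\xi+\xi,\pi^{\sharp}\eta+\eta]\!]_{E}$ from formula (\ref{Hom-Courant bracket for Hom-Lie bi}) with $X=\pi^{\sharp}\xi$, $Y=\pi^{\sharp}\eta$. The skew-symmetry of $\pi$ gives $\langle\pi^{\sharp}\xi,\eta\rangle-\langle\pi^{\sharp}\eta,\xi\rangle=2\langle\pi^{\sharp}\xi,\eta\rangle$, and then by the definition (\ref{pi bracket}) of $[\cdot,\cdot]_{\pi}$ the $A^{*}$-component of the bracket is $[\xi,\eta]_{A^{*}}+[\xi,\eta]_{\pi}$, while the $A$-component is
\begin{align*}
W(\xi,\eta):=[\pi^{\sharp}\xi,\pi^{\sharp}\eta]_{A}+\mathcal{L}_{\xi}^{A^{*}}(\pi^{\sharp}\eta)-\mathcal{L}_{\eta}^{A^{*}}(\pi^{\sharp}\xi)+d_{A^{*}}\langle\pi^{\sharp}\xi,\eta\rangle.
\end{align*}
Because every section of $\mathrm{graph}(\pi^{\sharp})$ is of the form $\pi^{\sharp}\xi+\xi$, integrability is equivalent to $W(\xi,\eta)=\pi^{\sharp}\big([\xi,\eta]_{A^{*}}+[\xi,\eta]_{\pi}\big)$ for all $\xi,\eta\in\Gamma(A^{*})$, i.e. to the vanishing of
\begin{align*}
&\big([\pi^{\sharp}\xi,\pi^{\sharp}\eta]_{A}-\pi^{\sharp}[\xi,\eta]_{\pi}\big)\\
&\qquad+\big(\mathcal{L}_{\xi}^{A^{*}}(\pi^{\sharp}\eta)-\mathcal{L}_{\eta}^{A^{*}}(\pi^{\sharp}\xi)+d_{A^{*}}\langle\pi^{\sharp}\xi,\eta\rangle-\pi^{\sharp}[\xi,\eta]_{A^{*}}\big).
\end{align*}

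To conclude I would identify the two grouped terms. By Proposition \ref{pi pi calculus} (applicable since $\pi$ is $\phi_{A}$-invariant) the first term equals $\tfrac12[\pi,\pi]_{A}(\phi_{A}^{\dagger}\xi,\phi_{A}^{\dagger}\eta,\cdot)$. For the second I would prove the Hom-analogue of Koszul's formula: expand $d_{A^{*}}\pi$ by the differential formula (\ref{A-gaibibun}) applied to the Lie algebroid $A^{*}$ --- whose relevant structure maps are $\phi_{A^{*}}=\phi_{A}^{\dagger}$ and $\phi_{A^{*}}^{\dagger}=\phi_{A}$, with $\phi_{A}(\pi)=\pi$ --- and reorganize using the Hom-Cartan formula (\ref{A-Lie derivative}) for $\mathcal{L}^{A^{*}}$; this should show the second term equals $(d_{A^{*}}\pi)(\phi_{A}^{\dagger}\xi,\phi_{A}^{\dagger}\eta,\cdot)$. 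Adding, integrability is equivalent to $\big(d_{A^{*}}\pi+\tfrac12[\pi,\pi]_{A}\big)(\phi_{A}^{\dagger}\xi,\phi_{A}^{\dagger}\eta,\cdot)=0$ for all $\xi,\eta$; since $\phi_{A}^{\dagger}$ is a bijection of $\Gamma(A^{*})$ and a $3$-section of $A$ vanishes iff all its contractions with pairs of cosections do, this is equivalent to $d_{A^{*}}\pi+\tfrac12[\pi,\pi]_{A}=0$. Reading this equivalence in both directions, together with the isotropy/invariance reductions of the first paragraph, gives the theorem.

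I expect the main obstacle to be the Hom-Koszul identity in the last paragraph: carrying the $\phi_{A}$-, $\phi_{A}^{\dagger}$- and $\phi_{A}^{-1}$-twists through the expansion of $d_{A^{*}}\pi$ and matching them with the twists already built into $\mathcal{L}^{A^{*}}$ and into $d_{A^{*}}$ on functions, so that both ``defect'' pieces end up carrying exactly the arguments $\phi_{A}^{\dagger}\xi$, $\phi_{A}^{\dagger}\eta$ and can be summed into a single contraction of $d_{A^{*}}\pi+\tfrac12[\pi,\pi]_{A}$. Everything else is bookkeeping already packaged in Propositions \ref{pi invariance and commutativity} and \ref{pi pi calculus} and in the rank count for maximal isotropy.
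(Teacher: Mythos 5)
Your proposal is correct and follows essentially the same route as the paper: reduce maximal isotropy to $H=\pi^{\sharp}$, reduce $\phi_{E}$-invariance to $\phi_{A}$-invariance of $\pi$ via Proposition \ref{pi invariance and commutativity}, and split the integrability defect into the $[\pi,\pi]_{A}$-term (Proposition \ref{pi pi calculus}) plus a term identified with $(d_{A^{*}}\pi)(\phi_{A}^{\dagger}\xi,\phi_{A}^{\dagger}\eta,\cdot)$ by direct expansion. The ``Hom-Koszul identity'' you flag as the main obstacle is exactly the displayed computation the paper carries out, and it goes through as you anticipate.
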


\begin{proof}
For any $\xi$ and $\eta $ in $\Gamma (A^{*})$, $H\xi+\xi$ and $H\eta +\eta$ are any elements in $\Gamma (\mbox{graph}(H))$. Since
 \begin{align*}
 2\langle \! \langle H\xi+\xi,H\eta+\eta \rangle \! \rangle=\langle \xi,H\eta\rangle +\langle \eta,H\xi\rangle,
\end{align*}
 we see that $\mbox{graph}(H)$ is isotropic if and only if there exists a $2$-section $\pi $ in $\Gamma (\Lambda ^{2}A)$ such that $H=\pi^{\sharp}$.  The maximality of $\mbox{graph}(H)$ is obvious. An element $\phi_{E}(H\xi+\xi)=\phi_{A}(H\xi)+\phi_{A}^{\dagger}(\xi)$ is in $\Gamma (\mbox{graph}(H))$ if and only if $\phi_{A}\circ H=H\circ \phi_{A}^{\dagger}$. Therefore by Proposition \ref{pi invariance and commutativity}, $\mbox{graph}(H)$ is maximally isotropic and $\phi_{E}$-invariant if and only if there exists a $\phi_{A}$-invariant $2$-section $\pi $ in $\Gamma (\Lambda ^{2}A)$ such that $H=\pi ^{\sharp}$. Then by formula (\ref{Hom-Courant bracket for Hom-Lie bi}), we compute
 \begin{align}\label{bracket culculus}
[\![\pi^{\sharp }\xi&+\xi,\pi^{\sharp}\eta+\eta ]\!]_{E}\nonumber\\
                    &=\left([\pi^{\sharp }\xi,\pi^{\sharp}\eta]_{A}+\mathcal{L}_{\xi}^{A^{*}}(\pi^{\sharp}\eta)-\mathcal{L}_{\eta}^{A^{*}}(\pi^{\sharp}\xi)+\frac{1}{2}d_{A^{*}}(\langle \pi^{\sharp }\xi,\eta \rangle-\langle \pi^{\sharp }\eta,\xi \rangle)\right)\nonumber\\
                    &\qquad +\left([\xi,\eta]_{A^{*}}+\mathcal{L}_{\pi^{\sharp}\xi}^{A}\eta-\mathcal{L}_{\pi^{\sharp}\eta}^{A}\xi-\frac{1}{2}d_{A}(\langle \pi^{\sharp }\xi,\eta \rangle-\langle \pi^{\sharp }\eta,\xi \rangle)\right)\nonumber\\
                    &=\left(\frac{1}{2}[\pi,\pi]_{A}(\phi_{A}^{\dagger}(\xi),\phi_{A}^{\dagger }(\eta),\cdot)+\pi^{\sharp}[\xi,\eta]_{\pi}\right.\nonumber\\
                    &\qquad \quad+\mathcal{L}_{\xi}^{A^{*}}(\pi^{\sharp}\eta)-\mathcal{L}_{\eta}^{A^{*}}(\pi^{\sharp}\xi)+d_{A^{*}}\langle \pi^{\sharp }\xi,\eta \rangle)+([\xi,\eta]_{A^{*}}+[\xi,\eta]_{\pi}),
\end{align}
where we use Proposition \ref{pi pi calculus} and the definition (\ref{pi bracket}) of $[\cdot,\cdot]_{\pi}$. 

On the other hand, for any $\xi,\eta,\zeta$ in $\Gamma (A^{*})$ and $\phi_{A}$-invariant $2$-section $\pi $, we calculate
 \begin{align*}
(d_{A^{*}}&\pi)(\phi_{A}^{\dagger}(\xi),\phi_{A}^{\dagger}(\eta),\zeta)\\
          &=a_{A^{*}}(\phi_{A}^{\dagger}(\xi))(\pi ((\phi_{A}^{\dagger})^{-1}(\phi_{A}^{\dagger}(\eta)),(\phi_{A}^{\dagger})^{-1}(\zeta)))\\
          &\quad -a_{A^{*}}(\phi_{A}^{\dagger}(\eta))(\pi ((\phi_{A}^{\dagger})^{-1}(\phi_{A}^{\dagger}(\xi)),(\phi_{A}^{\dagger})^{-1}(\zeta)))\\
          &\quad +a_{A^{*}}(\zeta)(\pi ((\phi_{A}^{\dagger})^{-1}(\phi_{A}^{\dagger}(\xi)),(\phi_{A}^{\dagger})^{-1}(\phi_{A}^{\dagger}(\eta))))\\
          &\quad -\phi_{A}(\pi)([(\phi_{A}^{\dagger})^{-1}(\phi_{A}^{\dagger}(\xi)),(\phi_{A}^{\dagger})^{-1}(\phi_{A}^{\dagger}(\eta))]_{A^{*}},\zeta)\\
          &\quad +\phi_{A}(\pi)([(\phi_{A}^{\dagger})^{-1}(\phi_{A}^{\dagger}(\xi)),(\phi_{A}^{\dagger})^{-1}(\zeta)]_{A^{*}},\phi_{A}^{\dagger}(\eta))\\
          &\quad -\phi_{A}(\pi)([(\phi_{A}^{\dagger})^{-1}(\phi_{A}^{\dagger}(\eta)),(\phi_{A}^{\dagger})^{-1}(\zeta)]_{A^{*}},\phi_{A}^{\dagger}(\xi))\\
          &=a_{A^{*}}(\phi_{A}^{\dagger}(\xi))(\langle \pi ^{\sharp}\eta,(\phi_{A}^{\dagger})^{-1}(\zeta)\rangle)-a_{A^{*}}(\phi_{A}^{\dagger}(\eta))(\langle \pi ^{\sharp}\xi,(\phi_{A}^{\dagger})^{-1}(\zeta)\rangle)\\
          &\quad +a_{A^{*}}(\zeta)(\langle \pi^{\sharp} \xi,\eta\rangle)-\pi([\xi,\eta]_{A^{*}},\zeta)\\
          &\quad -\pi(\phi_{A}^{\dagger}(\eta),[\xi,(\phi_{A}^{\dagger})^{-1}(\zeta)]_{A^{*}})+\pi(\phi_{A}^{\dagger}(\xi),[\eta,(\phi_{A}^{\dagger})^{-1}(\zeta)]_{A^{*}})\\
          &=\langle \mathcal{L}_{\xi}^{A^{*}}(\pi^{\sharp }\eta)-\mathcal{L}_{\eta}^{A^{*}}(\pi^{\sharp }\xi)+d_{A^{*}}\langle \pi^{\sharp}\xi,\eta \rangle -\pi^{\sharp }[\xi,\eta]_{A^{*}},\zeta\rangle,
\end{align*}
so that we obtain
 \begin{align}\label{d pi culculus}
(d_{A^{*}}\pi)&(\phi_{A}^{\dagger}(\xi),\phi_{A}^{\dagger}(\eta),\cdot)+\pi^{\sharp }[\xi,\eta]_{A^{*}}\nonumber\\
              &=\mathcal{L}_{\xi}^{A^{*}}(\pi^{\sharp }\eta)-\mathcal{L}_{\eta}^{A^{*}}(\pi^{\sharp }\xi)+d_{A^{*}}\langle \pi^{\sharp}\xi,\eta \rangle.
\end{align}
By substituting (\ref{d pi culculus}) in the equation (\ref{bracket culculus}), we have
 \begin{align}%\label{d pi culculus}
[\![\pi^{\sharp }\xi&+\xi,\pi^{\sharp}\eta+\eta ]\!]_{E}\nonumber\\
                    &=\left(\left(d_{A^{*}}\pi+\frac{1}{2}[\pi,\pi]_{A}\right)(\phi_{A}^{\dagger}(\xi),\phi_{A}^{\dagger }(\eta),\cdot)+\pi^{\sharp}([\xi,\eta]_{A^{*}}+[\xi,\eta]_{\pi})\right)\nonumber\\
                                                       &\qquad +([\xi,\eta]_{A^{*}}+[\xi,\eta]_{\pi}).
\end{align}
Therefore $\mbox{graph}(\pi^{\sharp})$ is closed under $[\![\cdot,\cdot ]\!]_{E}$ if and only if
 \begin{align}%\label{d pi culculus}
d_{A^{*}}\pi+\frac{1}{2}[\pi,\pi]_{A}=0
\end{align}
since $\phi_{A}$ is an isomorphism.

From the above, the proof is complete.
\end{proof}
 
\begin{example}
Let $(A, \varphi,\phi_{A},[\cdot,\cdot]_{A},a_{A})$ be a Hom-Lie algebroid over $M$ and $\pi $ a Hom-Poisson structure on $A$. Applying Theorem \ref{Hom-Lie bi to Hom-Courant} for the Hom-Lie bialgebroid $(A,A^*)$ in Example \ref{trivial example}, we see that the graph of $\pi ^\sharp$ satisfies the Maurer-Cartan type equation. Therefore $\mbox{graph}(\pi^\sharp)$ is Hom-Dirac structure on $A\oplus A^*$.
\end{example}

\end{document}